\author{A.A. Vasil'eva\footnote{Faculty of Mechanics and Mathematics, Lomonosov Moscow State University; Moscow Center for Fundamental and Applied Mathematics.}}
\title{Kolmogorov widths of an intersection of anisotropic finite-dimensional balls: case $2\le q_j<\infty$}
\date{}
\begin{document}

\maketitle

\newenvironment{Biblio}{%
                  \renewcommand{\refname}{\footnotesize REFERENCES}%
                  }

\def\inff{\mathop{\smash\inf\vphantom\sup}}
\renewcommand{\le}{\leqslant}
\renewcommand{\ge}{\geqslant}
\newcommand{\sgn}{\mathrm {sgn}\,}
\newcommand{\inter}{\mathrm {int}\,}
\newcommand{\dist}{\mathrm {dist}}
\newcommand{\supp}{\mathrm {supp}\,}
\newcommand{\R}{\mathbb{R}}
\newcommand{\Z}{\mathbb{Z}}
\newcommand{\N}{\mathbb{N}}
\newcommand{\Q}{\mathbb{Q}}
\theoremstyle{plain}
\newtheorem{Trm}{Theorem}
\newtheorem{trma}{Theorem}
\newtheorem{Def}{Definition}
\newtheorem{Cor}{Corollary}
\newtheorem{Lem}{Lemma}
\newtheorem{Rem}{Remark}
\newtheorem{Sta}{Proposition}
\renewcommand{\proofname}{\bf Proof}
\renewcommand{\thetrma}{\Alph{trma}}

\begin{abstract}
Order estimates for the Kolmogorov $n$-widths of $\cap _{\alpha\in A}\nu_\alpha B^{\overline{k}}_{\overline{p}}$ in $l^{\overline{k}} _{\overline{q}}$ are obtained; here $\overline{q}=(q_1, \, \dots, \, q_d)$, $2\le q_j<\infty$, $j=1, \, \dots, \, d$.
\end{abstract}

Key words: widths, anisotropic norms, intersections of balls

\section{Introduction}

In this paper we study the problem of estimating the Kolmogorov $n$-widths of an intersection of anisotropic balls $\nu_\alpha B^{\overline{k}}_{\overline{p}}$ in $l^{\overline{k}} _{\overline{q}}$ when each coordinate of the vector $\overline{q}$ lies in $[2, \, \infty)$ (see the notation below). The particular cases, when the vectors $\overline{p}$, $\overline{q}$, $\overline{k}$ were one-dimensional or two-dimensional, were considered in \cite{galeev1, vas_ball_inters, vas_mix_sev}. For one ball $B^{\overline{k}}_{\overline{p}}$, order estimates of the widths are obtained in \cite{vas_anisotr}. So, the present paper continues the previous studies. The estimates we obtained can be applied in the Kolmogorov width problem for
intersection of anisotropic function classes.

Let $N\in \N$, $1\le p\le \infty$, $(x_i)_{i=1}^N\in \mathbb{R}^N$, $\|(x_i)_{i=1}^N\|_{l_p^N} = \left(\sum \limits _{i=1}^N |x_i|^p\right)^{1/p}$ for $p<\infty$, $\|(x_i)_{i=1}^N\|_{l_p^N} = \max _{1\le i\le N}|x_i|$ for $p=\infty$. By $l_p^N$ we denote the space $\mathbb{R}^N$ with the norm $\|\cdot\|_{l_p^N}$, and by $B_p^N$, the unit ball in $l_p^N$.

Let $k_1, \, \dots, \, k_d\in \N$, $1\le p_1, \, \dots, \, p_d\le \infty$, $\overline{k}=(k_1, \, \dots, \, k_d)$, $\overline{p}=(p_1, \, \dots, \, p_d)$. By $l_{\overline{p}}^{\overline{k}}$ we denote the space $\mathbb{R}^{k_1\dots k_d}= \{(x_{j_1,\dots,j_d})_{1\le j_s\le k_s, \, 1\le s\le d}:\; x_{j_1,\dots,j_d}\in \mathbb{R}\}$ with the norm defined by induction on $d$: for $d=1$ it is $\|\cdot\|_{l_{p_1}^{k_1}}$, and for $d\ge 2$,
$$
\|(x_{j_1,\dots,j_d})_{1\le j_s\le k_s, \, 1\le s\le d}\|_{l_{\overline{p}}^{\overline{k}}} = \left\|\bigl(\|(x_{j_1,\dots, \, j_{d-1}, \, j_d})_{1\le j_s\le k_s, \, 1\le s\le d-1}\|_{l_{(p_1,\dots, \, p_{d-1})}^{(k_1,\dots, \, k_{d-1})}}\bigr)_{j_d=1}^{k_d}\right\|_{l_{p_d}^{k_d}}.
$$
The unit ball in $l_{\overline{p}}^{\overline{k}}$ is denoted by $B^{\overline{k}}_{\overline{p}}$. Notice that the spaces $l_{\overline{p}}^{\overline{k}}$ are the particular case of anisotropic spaces $L_{\overline{p}}$ on a product of measure spaces; for details, see \cite{bed_pan, gal_pan}.

\begin{Def}
Let $X$ be a normed space, and let $M\subset X$, $n\in
\Z_+$. The Kolmogorov $n$-width of the set $M$ in the space $X$ is defined as follows:
$$
d_n(M, \, X) = \inf _{L\in {\cal L}_n(X)} \sup _{x\in M} \inf
_{y\in L} \|x-y\|;
$$
here ${\cal L}_n(X)$ is the family of all subspaces in $X$
of dimension at most $n$.
\end{Def}
The properties of the widths are described in \cite{itogi_nt, kniga_pinkusa, teml_book, alimov_tsarkov}.

Let $A$ be a nonempty set, and let, for each $\alpha\in A$, a number $\nu_\alpha>0$ and a vector $\overline{p}_\alpha=(p_{\alpha,1}, \, \dots, \, p_{\alpha,d})$ be given, where $1\le p_{\alpha,j}\le \infty$, $1\le j\le d$. Let $\overline{k}\in \N^d$. Consider the set
\begin{align}
\label{m_def} M = \cap _{\alpha\in A} \nu_\alpha B^{\overline{k}}_{\overline{p}_\alpha}.
\end{align}
In this paper we obtain order estimates for the Kolmogorov $n$-widths of the set $M$ in $l_{\overline{q}}^{\overline{k}}$ for $2\le q_j<\infty$, $n\le \frac{k_1\dots k_d}{2}$; here $\overline{q}=(q_1, \, \dots, \, q_d)$.

The problem of the widths $d_n(B^k_p, \, l^k_q)$ was studied in \cite{k_p_s, stech_poper, pietsch1, stesin, bib_ismag, kashin_oct, bib_kashin, kashin_matr, gluskin1, bib_gluskin, garn_glus}. Order estimates of these quantities are known for all parameters except the case $q=\infty$, $p\in [1, \, 2)$; for $p\ge q$ and for $p=1$, $q=2$ the sharp values are known.

For $d=2$ the problem of estimates for the widths of the ball $B^{\overline{k}}_{\overline{p}}$ in $l^{\overline{k}}_{\overline{q}}$ was investigated in \cite{galeev2, galeev5, izaak1, izaak2, mal_rjut, vas_besov, dir_ull, vas_mix_sev, mal_rjut1}. The historical survey is written in \cite{vas_mix2} (except the recent results). In \cite{vas_mix_sev} the result from \cite{vas_besov} is generalized to arbitrary $\overline{p}=(p_1, \, p_2)$, under the condition $2\le q_1, \, q_2<\infty$. In \cite{mal_rjut1} order estimates of the widths of the ball $B^{\overline{k}}_{\overline{p}}$ in $l^{\overline{k}}_{\overline{q}}$ are obtained for $1\le q_1=q_2\le 2$ and arbitrary $p_1$, $p_2$.

For arbitrary $d\in \N$, $n\le k_1\dots k_d/2$, $\overline{q}\in [2, \, \infty)^d$ the order estimates of $d_n(B^{\overline{k}}_{\overline{p}}, \, l^{\overline{k}}_{\overline{q}})$ are obtained in \cite{vas_anisotr}. In addition, the main result of the paper \cite{mal_rjut1} (Theorem 1) immediately implies the order estimates in the case $1\le q_1=\dots = q_d\le 2$, and the main result of \cite{mal_rjut} yields the order estimates in the case $1\le p_i\le q_i\le 2$ ($1\le i\le \nu$), $1\le q_i\le p_i\le \infty$ ($\nu+1\le i\le d$), where $\nu\in \{0, \, 1, \, \dots, \, d\}$.

Order estimates of $d_n(\cap _{\alpha\in A}\nu_\alpha B^{\overline{k}}_{\overline{p}_\alpha}, \, l^{\overline{k}}_{\overline{q}})$ for $d=1$ are obtained in \cite{galeev1} for $n=k_1/2$ and in \cite{vas_ball_inters} for $n\le k_1/2$ (for $1\le q<\infty$ and arbitrary $p_\alpha\in [1, \, \infty]$, as well as for $q=\infty$ and $p_\alpha\ge 2$, $\alpha\in A$). The case $d=2$, $\overline{q}\in [2, \, \infty)^2$, $n\le k_1k_2/2$ was studied in \cite{vas_mix_sev}.

Also notice that in \cite{hinr_mic} the problem of estimating the Kolmogorov and Gelfand widths of balls in finite-dimensional symmetric spaces was studied (for example, in Lorentz abd Orlicz spaces). In \cite{schatten1, schatten2}  order estimates for the Kolmogorov, Gelfand and approximation numbers of embeddings of Schatten classes of $N\times N$-matrices were obtained.

Before formulating the main result of the present paper we give some notation.

Let $X$, $Y$ be sets, $f_1$, $f_2:\ X\times Y\rightarrow \mathbb{R}_+$.
We write $f_1(x, \, y)\underset{y}{\lesssim} f_2(x, \, y)$ (or $f_2(x, \, y)\underset{y}{\gtrsim} f_1(x, \, y)$) if, for any $y\in Y$, there is $c(y)>0$ such that $f_1(x, \, y)\le
c(y)f_2(x, \, y)$ for all $x\in X$; $f_1(x, \,
y)\underset{y}{\asymp} f_2(x, \, y)$, if $f_1(x, \, y)
\underset{y}{\lesssim} f_2(x, \, y)$ and $f_2(x, \,
y)\underset{y}{\lesssim} f_1(x, \, y)$.

Let $2\le q<\infty$, $1\le p\le \infty$. We set 
\begin{align}
\label{om_pq}
\omega_{p,q} = \begin{cases} 0 & \text{if }p> q, \\ \frac{1/p-1/q}{1/2-1/q} & \text{if }2<p\le q, \\ 1 & \text{if }1\le p\le 2.\end{cases} 
\end{align}

Let $\sigma$ be a permutation of $d$ elements such that \begin{align}
\label{upor} \omega_{p_{\sigma(1)},q_{\sigma(1)}} \le \omega_{p_{\sigma(2)},q_{\sigma(2)}}\le \dots \le \omega_{p_{\sigma(d)},q_{\sigma(d)}}.
\end{align}
The numbers $\mu$, $\nu\in \{0, \, \dots, \, d\}$ are defined as follows:
\begin{align}
\label{mu_nu_def} \{1, \, \dots, \, \mu\} = \{j:\; \omega_{p_{\sigma(j)},q_{\sigma(j)}} = 0\}, \; \{1, \, \dots, \, \nu\} = \{j:\; \omega_{p_{\sigma(j)},q_{\sigma(j)}}<1\}.
\end{align}
By \eqref{om_pq}, \eqref{upor}, the condition $j\in \{1, \, \dots, \, \mu\}$ is equivalent to $p_{\sigma(j)}\ge q_{\sigma(j)}$ for $q_{\sigma(j)}>2$, and to $p_{\sigma(j)}> q_{\sigma(j)}$ for $q_{\sigma(j)}=2$; the condition $j\in \{1, \, \dots, \, \nu\}$ is equivalent to $p_{\sigma(j)}> 2$.

We set $p_j^* = \max\{p_j, \, 2\}$, $1\le j\le d$,
\begin{align}
\label{phi_def}
\begin{array}{c} \Phi(\overline{p},\, \overline{q}, \, \overline{k}, \, n) = \prod _{j=1}^\mu k_{\sigma(j)}^{1/q_{\sigma(j)} - 1/p_{\sigma(j)}} \cdot \min \Bigl\{ 1, \, \min _{\mu+1\le t\le d} \prod _{j=\mu+1}^{t-1} k_{\sigma(j)}^{1/q_{\sigma(j)}-1/p^*_{\sigma(j)}} \\ \times(n^{-1/2}k_{\sigma(1)}^{1/2}\dots k_{\sigma(t-1)}^{1/2} k_{\sigma(t)}^{1/q_{\sigma(t)}}\dots k_{\sigma(d)}^{1/q_{\sigma(d)}})^{\omega _{p_{\sigma(t)},q_{\sigma(t)}}}\Bigl\}.
\end{array}
\end{align}

In \cite{vas_anisotr} it was shown that for $2\le q_j<\infty$, $n\le \frac{k_1\dots k_d}{2}$ the following order estimate holds:
$$
d_n(B_{\overline{p}}^{\overline{k}}, \, l_{\overline{q}}^{\overline{k}}) \underset{\overline{q}}{\asymp} \Phi(\overline{p},\overline{q}, \, \overline{k}, \, n).
$$

Given $\overline{p} = (p_1, \, \dots, \, p_d)$, $\lambda\in \R$, we denote by $\frac{\lambda}{\overline{p}}$ the vector with coordinates $\left(\frac{\lambda}{p_1}, \, \dots,\, \frac{\lambda}{p_d}\right)$.

For $p\in [1, \, \infty]^d$, $2<q<\infty$ we set
\begin{align}
\label{om_prime} \omega'_{p,q} = \frac{1/p-1/q}{1/2-1/q}.
\end{align}

Given $E\subset \R^d$, we denote by ${\rm conv}\, E$ and ${\rm aff}\, E$, respectively, the convex and the affine hull of the set $E$.

\begin{Def}
\label{zm_def}
Let $Z\subset \R^d$, $1\le m\le d+1$. We say that $Z\in {\cal Z}_m$ if there is a set $I\subset \{1, \, \dots, \, d\}$ such that one of the following conditions holds:
\begin{enumerate}
\item $\# I = m-1$,
$$
Z=\{(x_1, \, \dots, \, x_d)\in \R^d:\; x_i=1/q_i, \; i\in I\};
$$
\item $\# I = m-1$,
$$
Z=\{(x_1, \, \dots, \, x_d)\in \R^d:\; x_i=1/2, \; i\in I\};
$$
\item $m\ge 2$, $\# I=m$,
$$
Z=\{(x_1, \, \dots, \, x_d)\in \R^d:\; \omega'_{1/x_i,q_i} = \omega'_{1/x_l,q_l}\in (0, \, 1)\; \forall i, \, l\in I:\; q_i>2, \, q_l>2;$$$$ x_i=1/2 \; \; \forall i\in I:\; q_i=2\},
$$
and $\# \{i\in I:\; q_i>2\}\ge 2$.
\end{enumerate}
\end{Def}

The plane ${\rm aff}\, Z$ has codimension $m-1$. It suffices to check this fact in case 3. Let
$$
I'=\{i\in I:\; q_i>2\} = \{i_1, \, \dots, \, i_k\}, \; k\ge 2,
$$
$$
I''=\{i\in I:\; q_i=2\} = \{j_1, \, \dots, \, j_{m-k}\}.
$$
Then ${\rm aff}\, Z$ is given by the equations 
$$
\frac{x_{i_1}-1/q_{i_1}}{1/2-1/q_{i_1}} = \dots = \frac{x_{i_k}-1/q_{i_k}}{1/2-1/q_{i_k}}, \; x_{j_1} = \dots = x_{j_{m-k}} =1/2;
$$
it is the system of $m-1$ linearly independent equations.

Let $L \subset \R^d$, $Z\subset \R^d$ be affine planes intersecting at point $\hat x$. We say that they are complementary if $(L-\hat x)\oplus (Z-\hat x) = \R^d$ (i.e., the linear subspaces $L-\hat x$ and $Z-\hat x$ intersect only at zero and their sum is $\R^d$).

\begin{Def}
\label{nm_def}
Let $1\le m\le d+1$, $\overline{\alpha} = (\alpha_1, \, \dots, \, \alpha_m) \in A^m$. We say that $\overline{\alpha}\in {\cal N}_m$ if there are numbers $\lambda_j>0$, $j=1, \, \dots, \, m$, $\sum \limits _{j=1}^m \lambda_j=1$, and a set $Z\in {\cal Z}_m$ such that $\sum \limits _{j=1}^m \frac{\lambda_j}{\overline{p}_{\alpha_j}} \in Z$, and the planes ${\rm aff}\, \{1/\overline{p}_{\alpha_j}\}_{j=1}^m$ and ${\rm aff}\, Z$ are complementary.
\end{Def}

Notice that if $\overline{\alpha}\in {\cal N}_m$, then the points $\{1/\overline{p}_{\alpha_j}\}_{j=1}^m$ are affinely independent; in particular, the points $\alpha_1, \, \dots, \, \alpha_m$ are different. Indeed, from the complementarity of the planes and the equality $\dim {\rm aff}\, Z=d - m+1$ for $Z\in {\cal Z}_m$ it follows that ${\rm aff}\, \{1/\overline{p}_{\alpha_j}\}_{j=1}^m$ is $m-1$-dimensional.

Let $\overline{\alpha}\in {\cal N}_m$ and let $Z \in {\cal Z}_m$ be such as in Definition \ref{nm_def}. We denote the corresponding numbers $\lambda_j$ by $\lambda_j(\overline{\alpha}, \, Z)$. We also define the vector $$\overline{\theta}(\overline{\alpha}, \, Z) = (\theta_1(\overline{\alpha}, \, Z), \, \dots, \, \theta_d(\overline{\alpha}, \, Z))$$ by the equation
\begin{align}
\label{theta_a_def}
\frac{1}{\overline{\theta}(\overline{\alpha}, \, Z)} = \sum \limits _{j=1}^m \frac{\lambda_j(\overline{\alpha}, \, Z)}{\overline{p}_{\alpha_j}}.
\end{align}

\begin{Trm}
\label{main} Let the set $M$ be defined by formula \eqref{m_def}, and let $2\le q_i<\infty$, $1\le i\le d$, $n\in \Z_+$, $n\le \frac{k_1\dots k_d}{2}$. Then
\begin{align}
\label{main_eq}
d_n(M, \, l_{\overline{q}}^{\overline{k}}) \underset{\overline{q},d}{\asymp} \min _{1\le m\le d+1} \inf _{\overline{\alpha} \in {\cal N}_m, \, Z \in {\cal Z}_m} \nu_{\alpha_1}^{\lambda_1(\overline{\alpha}, \, Z)} \dots \nu_{\alpha_m}^{\lambda_m(\overline{\alpha}, \, Z)} \Phi(\overline{\theta}(\overline{\alpha}, \, Z), \, \overline{q}, \, \overline{k}, \, n)
\end{align}
(the infimum is taken over $\overline{\alpha} \in {\cal N}_m$, $Z\in {\cal Z}_m$ for which the corresponding numbers $\lambda_j(\overline{\alpha}, \, Z)$ are well-defined).
\end{Trm}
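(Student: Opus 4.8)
The plan is to prove the upper and lower bounds separately, reducing everything to the single-ball estimate $d_n(B_{\overline p}^{\overline k},l_{\overline q}^{\overline k})\asymp\Phi(\overline p,\overline q,\overline k,n)$ from \cite{vas_anisotr}.

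\emph{Upper bound.} First I would observe that for any $m$, any $\overline\alpha=(\alpha_1,\dots,\alpha_m)\in{\cal N}_m$, any admissible $Z\in{\cal Z}_m$, and the associated weights $\lambda_j=\lambda_j(\overline\alpha,Z)$ with $\sum\lambda_j=1$, an interpolation (Hölder-type) inequality for anisotropic norms gives
$$
M=\bigcap_{\alpha\in A}\nu_\alpha B^{\overline k}_{\overline p_\alpha}\subseteq \Bigl(\prod_{j=1}^m\nu_{\alpha_j}^{\lambda_j}\Bigr)B^{\overline k}_{\overline\theta},\qquad \tfrac1{\overline\theta}=\sum_{j=1}^m\tfrac{\lambda_j}{\overline p_{\alpha_j}},
$$
since $\|x\|_{l^{\overline k}_{\overline\theta}}\le\prod_j\|x\|_{l^{\overline k}_{\overline p_{\alpha_j}}}^{\lambda_j}$ and each factor is $\le\nu_{\alpha_j}$ on $M$. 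By monotonicity of widths and \cite{vas_anisotr}, $d_n(M,l^{\overline k}_{\overline q})\le\prod_j\nu_{\alpha_j}^{\lambda_j}\,d_n(B^{\overline k}_{\overline\theta},l^{\overline k}_{\overline q})\underset{\overline q}{\lesssim}\prod_j\nu_{\alpha_j}^{\lambda_j}\Phi(\overline\theta,\overline q,\overline k,n)$. Taking the infimum over $m$, $\overline\alpha$, $Z$ yields ``$\lesssim$'' in \eqref{main_eq}. The role of the constraint $Z\in{\cal Z}_m$ and complementarity is that it is precisely on such planes that $\Phi(\cdot,\overline q,\overline k,n)$ is (up to order) constant along the directions of ${\rm aff}\,Z$, so that among all convex combinations of the available $1/\overline p_\alpha$ lying on a given ``level set'' of $\Phi$ one only needs the extreme ones $\overline\theta(\overline\alpha,Z)$; this should be extracted from the explicit piecewise structure of $\Phi$ in \eqref{phi_def}.

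\emph{Lower bound.} Here I would argue by duality/reduction: it suffices to exhibit, for the optimal parameters, a single ball $\nu_{\alpha_0}B^{\overline k}_{\overline p_{\alpha_0}}$ (more precisely a suitable averaged body) contained in a fixed multiple of $M$ on which $d_n$ is already of the claimed size. Concretely, fix the minimizing $m,\overline\alpha,Z$; the complementarity of ${\rm aff}\,\{1/\overline p_{\alpha_j}\}$ and ${\rm aff}\,Z$ means the value $\prod_j\nu_{\alpha_j}^{\lambda_j}\Phi(\overline\theta,\overline q,\overline k,n)$ cannot be lowered by moving $1/\overline\theta$ within ${\rm aff}\,Z$, so $\overline\theta$ is a genuine ``corner''. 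One then builds a test subset of $M$ supported on a block of coordinates whose cardinality is dictated by $Z$ (the equations defining $Z\in{\cal Z}_m$ pick out which $k_{\sigma(j)}$ are ``active'' and how the $2$'s, $q_i$'s and the $\omega'$-balance enter), on which the $\nu_\alpha B^{\overline k}_{\overline p_\alpha}$ constraints for $\alpha\ne\alpha_1,\dots,\alpha_m$ are inactive up to constants (this is where one uses that $\overline\alpha\in{\cal N}_m$, i.e.\ that these $m$ points genuinely determine the minimum), and where the remaining $m$ constraints combine to a single anisotropic ball of the right radius. Applying the lower bound half of \cite{vas_anisotr} to that ball on the block of coordinates gives ``$\gtrsim$''. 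Standard finite-dimensional width machinery — Gelfand-width duality, the relation $d_n\asymp d^n$ in the self-dual range $n\le\tfrac12\dim$, and volume/Euclidean-section arguments à la Kashin–Gluskin already used in \cite{vas_anisotr} — is invoked as a black box.

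\emph{Main obstacle.} The hard part is the combinatorial/convex-geometric bookkeeping that matches the minimization over $(m,\overline\alpha,Z)$ to the true value of $d_n(M,\cdot)$: one must show (i) that for the upper bound it is enough to restrict convex combinations to configurations with $\sum\lambda_j/\overline p_{\alpha_j}\in Z$ for some $Z\in{\cal Z}_m$ with complementary affine hull — i.e.\ that an arbitrary point of $\bigcap_\alpha\nu_\alpha B_{\overline p_\alpha}$ is dominated by one coming from such a corner, which requires understanding the faces of the region where $\Phi$ is constant; and (ii) that for the lower bound the body $M$ really is as large as this corner predicts, i.e.\ no \emph{other} subset of constraints gives a smaller width — here one has to run the construction for \emph{every} $(m,\overline\alpha,Z)$ and take the worst, and show the test set for the minimizing one is not killed by the omitted balls. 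Controlling the dependence of constants only on $\overline q$ and $d$ (uniformly in $A$, the $\nu_\alpha$, and $\overline k$, $n$) throughout both steps is the technically delicate point.
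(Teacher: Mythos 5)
Your upper bound is correct and coincides with the paper's: the inclusion $\cap_j\nu_j B^{\overline{k}}_{\overline{p}^j}\subset\nu_1^{\lambda_1}\dots\nu_m^{\lambda_m}B^{\overline{k}}_{\overline{p}}$ (Lemma \ref{inter_incl}) together with the single-ball estimate of Theorem \ref{one_ball_est} immediately gives \eqref{up_est_main}; note that for this direction you do not need any structure theory of the level sets of $\Phi$ — the inclusion holds for every convex combination, and you simply apply it to each admissible $(\overline{\alpha},Z)$, so your worry (i) is moot.

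The lower bound, however, is where the entire difficulty lies, and your proposal leaves it essentially unproved. The paper's test body is $W=\nu_{\alpha_1}^{\lambda_1}\dots\nu_{\alpha_m}^{\lambda_m}\,u_1^{-1/\theta_1}\dots u_d^{-1/\theta_d}V^{\overline{k}}_{\overline{u}}$ with $V^{\overline{k}}_{\overline{u}}$ the averaged body of Theorem \ref{vks_low_est}, where the parameters $s_i$ for $i\notin I$ are dictated by the regime of $n$ and those for $i\in I$ are obtained by solving the system \eqref{si_def_ini}, which forces $W$ to respect the $m$ active constraints exactly. The two claims you assert without proof — that the remaining constraints $\beta\notin\{\alpha_1,\dots,\alpha_m\}$ are ``inactive up to constants'' (inequality \eqref{incl_ineq11}), and that the resulting $s_i$ satisfy $1\le s_i\le k_i$ so that Theorem \ref{vks_low_est} yields the right order — are precisely the content of \S 3. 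Both are derived not from $\overline{\alpha}\in{\cal N}_m$ alone but from the \emph{minimality} of $(\overline{\alpha},Z)$ over all competitors in $\tilde{\cal N}_{m'}\times\tilde{\cal Z}_{m'}$: one compares the minimal value against carefully chosen perturbed points $\overline{\theta}_{(1)},\overline{\theta}_{(2)},\overline{\theta}(\overline{\gamma})\in\Theta$ obtained by moving $1/\overline{\theta}(\overline{\alpha})$ inside $\Delta$ along level sets of $\Phi$, and in the case $I=I_\omega$ one additionally needs the convexity of $\log\Phi$ on the relevant polyhedral decomposition (Lemma \ref{conv_phi}), Dubovitskii--Milyutin and Carath\'eodory to represent the $s_i$ as $\prod_j A_{j,i}^{\tau_j}$ with $\tau_j\in[0,1]$. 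None of this mechanism is present in your sketch, and no alternative is offered. You also omit the reductions that make the argument legitimate in general: the general-position hypotheses and the perturbation/limiting passages of \S 4 (arbitrary configurations, infinite $A$, and $q_i=2$), and the identification of which index structures $I=I_q$, $I=I_2$, $I=I_{\omega_1}$ can actually occur at the minimum. Finally, the ``standard machinery'' you invoke (Gelfand duality, $d_n\asymp d^n$) is not what the paper uses; the needed input is the specific lower bound \eqref{vk1kddn} for the averaged bodies. As it stands, the lower half of your argument is a statement of intent rather than a proof.
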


The paper is organized as follows. In \S 2 we formulate the well-known results which will be further applied; also we prove the upper estimate for the widths. In \S 3 we prove the lower estimate for the finite set $A$, under assumption that $q_i>2$, $i=1, \, \dots, \, d$, and the points $\{(\nu_\alpha, \, \overline{p}_\alpha)\}_{\alpha \in A}$ are in the general position. In \S 4 we prove the lower estimate for arbitrary $A$, $\overline{q}\in [2, \, \infty)^d$ and $\{(\nu_\alpha, \, \overline{p}_\alpha)\}_{\alpha \in A}$.

\section{Auxiliary assertions and the upper estimate}

Let $$G = \{(\sigma_1, \, \dots, \, \sigma_d, \, \varepsilon_1, \, \dots, \, \varepsilon_d): \sigma _j \in S_{k_j}, \, \varepsilon_j \in \{-1, \, 1\}^{k_j}, \, 1\le j\le d\},$$
where $S_{k_j}$ is the group of permutations of $k_j$ elements.
For $g = (\sigma_1, \, \dots, \, \sigma_d, \, \varepsilon_1, \, \dots, \, \varepsilon_d) \in G$, $\varepsilon_j = (\varepsilon_{j,1}, \, \dots, \, \varepsilon_{j,k_j})$, $x = (x_{i_1,\dots,i_d})_{1\le i_1\le k_1,\dots, 1\le i_d\le k_d}$ we set $$g(x) = (\varepsilon_{1,i_1}\dots \varepsilon _{d,i_d}x_{\sigma_1(i_1),\dots,\sigma_d(i_d)})_{1\le i_1\le k_1,\dots, 1\le i_d\le k_d}.$$

Let $s_j \in \{1, \, \dots, \, k_j\}$, $1\le j\le d$, $\overline{s}=(s_1, \, \dots, \, s_d)$. We set $\hat x(\overline{s}) = (\hat x_{i_1,\dots,i_d})_{1\le i_1\le k_1,\dots, 1\le i_d\le k_d}$, where
$$
\hat x_{i_1,\dots,i_d} = \begin{cases} 1 & \text{for }1\le i_1\le s_1, \, \dots, \, 1\le i_d\le s_d, \\ 0 & \text{otherwise},\end{cases}
$$
\begin{align}
\label{vks_defin}
V^{\overline{k}}_{\overline{s}} = {\rm conv}\, \{g(\hat x(\overline{s})):\; g\in G\}.
\end{align}

\begin{trma}
\label{vks_low_est} {\rm (see \cite[Corollary 1]{vas_anisotr}).} Let $2\le q_j<\infty$, $k_j\in \N$, $s_j\in \{1, \, \dots, \, k_j\}$, $1\le j\le d$, $n\in \Z_+$, $n\le \frac{k_1\dots k_d}{2}$. Then 
\begin{align} \label{vk1kddn}
d_n(V^{\overline{k}}_{\overline{s}}, \, l_{\overline{q}}^{\overline{k}}) \gtrsim \begin{cases} s_1^{1/q_1} \dots s_d^{1/q_d}, & n \le k_1^{\frac{2}{q_1}}\dots k_d^{\frac{2}{q_d}} s_1^{1-\frac{2}{q_1}}\dots s_d^{1-\frac{2}{q_d}}, \\ n^{-1/2}k_1^{1/q_1}\dots k_d^{1/q_d}s_1^{1/2}\dots s_d^{1/2}, & n \ge k_1^{\frac{2}{q_1}}\dots k_d^{\frac{2}{q_d}} s_1^{1-\frac{2}{q_1}}\dots s_d^{1-\frac{2}{q_d}}. \end{cases}
\end{align}
\end{trma}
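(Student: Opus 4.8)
Although \eqref{vk1kddn} is quoted from \cite[Corollary~1]{vas_anisotr}, here is the route I would take. Write $N=k_1\cdots k_d$, $S=s_1\cdots s_d$, and let $n_{*}=k_1^{2/q_1}\cdots k_d^{2/q_d}s_1^{1-2/q_1}\cdots s_d^{1-2/q_d}$ be the breakpoint of \eqref{vk1kddn}. One checks directly that $s_1^{1/q_1}\cdots s_d^{1/q_d}=n_{*}^{-1/2}k_1^{1/q_1}\cdots k_d^{1/q_d}s_1^{1/2}\cdots s_d^{1/2}$, so the two branches agree at $n=n_{*}$, and $n_{*}=\prod_{j=1}^{d}n_{*}^{(j)}$ with $n_{*}^{(j)}=k_j^{2/q_j}s_j^{1-2/q_j}$ the one-dimensional breakpoint of the $j$-th factor. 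The structural observation I would start from is that $V^{\overline k}_{\overline s}$ is a projective tensor hull: since $\hat x(\overline s)=\hat x(s_1)\otimes\dots\otimes\hat x(s_d)$ and $G=\prod_{j=1}^d\bigl(S_{k_j}\times\{-1,1\}^{k_j}\bigr)$ acts factorwise, the extreme points of $V^{\overline k}_{\overline s}$ are exactly the tensors $g_1\hat x(s_1)\otimes\dots\otimes g_d\hat x(s_d)$, so that
\[
V^{\overline k}_{\overline s}={\rm conv}\,\bigl\{v_1\otimes\dots\otimes v_d:\ v_j\in V^{k_j}_{s_j},\ 1\le j\le d\bigr\},\qquad V^{k_j}_{s_j}=B^{k_j}_\infty\cap s_j B^{k_j}_1 .
\]

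\noindent\emph{One-dimensional case.} First I would establish $d_m(V^k_s,\,l^k_q)\gtrsim\min\{s^{1/q},\ m^{-1/2}k^{1/q}s^{1/2}\}$ for $m\le k/2$. For $m\le s/2$ this is immediate from $[-1,1]^s\subset V^k_s$ (placed on the first $s$ coordinates, where the ambient norm restricts to $l^s_q$) together with the elementary estimate $d_m(B^s_\infty,\,l^s_q)\asymp(s-m)^{1/q}\asymp s^{1/q}$, obtained by iterating the definition of the mixed norm and using $d_m(B^M_\infty,\,l^M_q)\asymp(M-m)^{1/q}$ for $m\le M/2$. For larger $m$ I would invoke the classical lower bounds of Kashin, Gluskin and Garnaev--Gluskin for $d_m(B^k_1,\,l^k_q)$, in the logarithm-free form valid for the body $B^k_\infty\cap sB^k_1$ (the active $l_\infty$-constraint removing the logarithmic factors present when $s=1$); these one-dimensional estimates are recorded in \cite{galeev1,vas_ball_inters}.

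\noindent\emph{Passage to general $d$.} The principal ingredient is a Fubini-type lower bound for widths of projective tensor hulls: if $n=n_1\cdots n_d$ with $n_j\le k_j/2$, then
\[
d_n\bigl({\rm conv}\,\{v_1\otimes\dots\otimes v_d:\ v_j\in V^{k_j}_{s_j}\},\ l^{\overline k}_{\overline q}\bigr)\ \gtrsim\ \prod_{j=1}^{d}d_{n_j}\bigl(V^{k_j}_{s_j},\,l^{k_j}_{q_j}\bigr),
\]
which I would prove by slicing an $n$-dimensional approximating subspace along generic fibres, successively in each variable, and using the $G$-invariance to control the averages, so that on average one recovers approximating subspaces of dimension $\lesssim n_j$ for the factor $V^{k_j}_{s_j}$. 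Combined with the one-dimensional estimate, a suitable choice of the allocation $n_1,\dots,n_d$ reproduces \eqref{vk1kddn}: when $n\le n_{*}$ one takes $n_j\le n_{*}^{(j)}$ with $n_1\cdots n_d\ge n$, so each factor lies in its first branch and the product of the factor bounds equals $s_1^{1/q_1}\cdots s_d^{1/q_d}$; when $n\ge n_{*}$ one takes a factorisation with each $n_j$ in the second branch of its factor, and since $\prod_j n_j^{-1/2}=n^{-1/2}$ irrespective of the splitting, the product of the factor bounds equals $n^{-1/2}k_1^{1/q_1}\cdots k_d^{1/q_d}s_1^{1/2}\cdots s_d^{1/2}$; the remaining values $n>\prod_j\lfloor k_j/2\rfloor$ not reached by such factorisations would be handled by a direct symmetrisation of $\hat x(\overline s)$ over the full group $G$. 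The part that requires real care — and, I expect, the main obstacle — is making the slicing and the symmetrisation respect the anisotropic norm $l^{\overline k}_{\overline q}$ uniformly over all admissible $n$ while keeping the dimension bookkeeping across the $d$ factors correct; this is the technical core carried out in \cite{vas_anisotr}.
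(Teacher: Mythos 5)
First, note that Theorem \ref{vks_low_est} is not proved in this paper at all: it is imported verbatim from \cite[Corollary 1]{vas_anisotr}, so there is no internal proof to compare with. Judged on its own merits, your reconstruction gets the combinatorics right (the tensor‑product description of $V^{\overline k}_{\overline s}$, the matching of the two branches at the breakpoint $n_*$, and the factorisation $n_*=\prod_j k_j^{2/q_j}s_j^{1-2/q_j}$ are all correct, and the one‑dimensional lower bound you quote is indeed available in \cite{galeev1,vas_ball_inters}), but the step you rest the whole argument on is not a theorem.

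The gap is the ``Fubini‑type'' inequality $d_{n_1\cdots n_d}\bigl({\rm conv}\{v_1\otimes\dots\otimes v_d\}\bigr)\gtrsim\prod_j d_{n_j}(V^{k_j}_{s_j},l^{k_j}_{q_j})$. Kolmogorov widths are not supermultiplicative under projective tensor products: already for $A_j=B_2^{k}$ in $l_2^{k}$ one has $d_{k/2}(B_2^k,l_2^k)=1$, while the width of the projective tensor hull (the Schatten $S_1^k$ ball in $S_2^k$) at dimension $k^2/4$ tends to $0$ as $k\to\infty$ (cf. \cite{schatten1,schatten2}), so the product lower bound fails in general. Nor does ``slicing an $n$‑dimensional approximating subspace along generic fibres'' produce approximating subspaces of dimension $\lesssim n_j$ for the individual factors in any controlled way --- a subspace of $\R^{k_1\cdots k_d}$ of dimension $n$ can have full‑dimensional, badly distributed traces on every coordinate slice, and averaging over $G$ does not repair this. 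The lower bound \eqref{vk1kddn} is in fact obtained in \cite{vas_anisotr} by the averaging (Ismagilov--Gluskin type) method applied directly to the orbit $\{g(\hat x(\overline s))\}_{g\in G}$: one bounds from below the mean $q$‑distance of the orbit to an arbitrary $n$‑dimensional subspace via trace/duality arguments in the anisotropic norm, rather than by reducing to the one‑dimensional factors. If you want a self‑contained proof you should follow that route; as written, your key lemma is unproved and, stated at that level of generality, false.
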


\begin{trma}
\label{one_ball_est} {\rm (see \cite{vas_anisotr}).} Let $2\le q_j<\infty$, $j=1, \, \dots, \, d$, $n\in \Z_+$, $n\le \frac{k_1\dots k_d}{2}$, and let the function $\Phi$ be defined by formula \eqref{phi_def}. Then
$$
d_n(B^{\overline{k}}_{\overline{p}}, \, l^{\overline{k}} _{\overline{q}}) \underset{\overline{q}}{\asymp} \Phi(\overline{p}, \, \overline{q}, \, \overline{k}, \, n).
$$
\end{trma}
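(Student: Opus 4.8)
The plan is to establish the two-sided estimate in two steps, after a preliminary reduction to the case $p_j\ge 2$ for all $j$. Setting $p_j^*=\max\{p_j,2\}$, the inequality $\|\cdot\|_{l^{k_j}_{p_j^*}}\le\|\cdot\|_{l^{k_j}_{p_j}}$ gives $B^{\overline k}_{\overline p}\subseteq B^{\overline k}_{\overline{p^*}}$, while $\Phi(\overline p,\overline q,\overline k,n)=\Phi(\overline{p^*},\overline q,\overline k,n)$ directly from \eqref{om_pq}, \eqref{phi_def}: an exponent $p_j$ enters \eqref{phi_def} either already as $p_j^*$, or through $\omega_{p_j,q_j}=1=\omega_{p_j^*,q_j}$ when $p_j<2$, or through the factor $k_{\sigma(j)}^{1/q_{\sigma(j)}-1/p_{\sigma(j)}}$ with $j\le\mu$, where $p_{\sigma(j)}\ge q_{\sigma(j)}\ge 2$, so $p_{\sigma(j)}=p_{\sigma(j)}^*$. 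Hence from now on $p_j=p_j^*\ge 2$.

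For the lower bound I would fix $\overline s=(s_1,\dots,s_d)$ with $s_j\in\{1,\dots,k_j\}$ and note that the iterated norm of the indicator $\hat x(\overline s)$ of the box $\{1,\dots,s_1\}\times\cdots\times\{1,\dots,s_d\}$ equals $\prod_{j=1}^d s_j^{1/p_j}$ (peel off one coordinate at a time); since every $g\in G$ acts on $l^{\overline k}_{\overline p}$ isometrically, all generators $g(\hat x(\overline s))$ of $V^{\overline k}_{\overline s}$ have this norm, so $V^{\overline k}_{\overline s}\subseteq\bigl(\prod_j s_j^{1/p_j}\bigr)B^{\overline k}_{\overline p}$. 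By monotonicity of widths and Proposition \ref{vks_low_est} — whose two cases amount to $d_n(V^{\overline k}_{\overline s},l^{\overline k}_{\overline q})\gtrsim\min\bigl\{\prod_j s_j^{1/q_j},\ n^{-1/2}\prod_j k_j^{1/q_j}s_j^{1/2}\bigr\}$, the threshold in \eqref{vk1kddn} being precisely where the two expressions coincide — this gives, with constants depending only on $\overline q$ and $d$,
\[
d_n(B^{\overline k}_{\overline p},l^{\overline k}_{\overline q})\ \gtrsim\ \max_{\overline s}\ \prod_{j=1}^d s_j^{-1/p_j}\min\Bigl\{\prod_{j=1}^d s_j^{1/q_j},\ n^{-1/2}\prod_{j=1}^d k_j^{1/q_j}s_j^{1/2}\Bigr\}\ =:\ \Psi .
\]
It then remains to show $\Psi\gtrsim\Phi$. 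Relaxing each $s_j$ to $[1,k_j]$ (rounding costs a bounded factor) and writing $s_j=e^{t_j}$, the logarithm of the quantity being maximized is $\min\{\sum_j a_j t_j,\ \log c+\sum_j b_j t_j\}$ with $a_j=1/q_j-1/p_j$, $b_j=1/2-1/p_j\ge 0$, $c=n^{-1/2}\prod_j k_j^{1/q_j}$; rewriting the binary minimum as $\min_{\theta\in[0,1]}$ of a convex combination and using the minimax theorem, $\log\Psi=\min_{\theta\in[0,1]}F(\theta)$ with $F(\theta)=\theta\log c+\sum_j\bigl((1-\theta)a_j+\theta b_j\bigr)_+\log k_j$. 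As $\theta$ grows, the direction $\sigma(j)$ becomes active exactly at $\theta=\omega'_{p_{\sigma(j)},q_{\sigma(j)}}=\omega_{p_{\sigma(j)},q_{\sigma(j)}}$ (those with $j\le\mu$ are active from $\theta=0$), so $F$ is convex, piecewise linear, with breakpoints the $\omega_{p_{\sigma(j)},q_{\sigma(j)}}$ for $j>\mu$, and the slope of its last piece is $\tfrac12\log(k_1\cdots k_d/n)>0$ by the hypothesis $n\le\tfrac12 k_1\cdots k_d$. Evaluating $F$ at $\theta=0$ gives $\log\prod_{j\le\mu}k_{\sigma(j)}^{1/q_{\sigma(j)}-1/p_{\sigma(j)}}$, and at $\theta=\omega_{p_{\sigma(t)},q_{\sigma(t)}}$ a short computation, using $\omega'_{p_{\sigma(t)},q_{\sigma(t)}}=-a_{\sigma(t)}/(b_{\sigma(t)}-a_{\sigma(t)})$, yields $\log$ of the $t$-th term in the inner minimum of \eqref{phi_def}; since a convex piecewise-linear function on $[0,1]$ attains its minimum at a breakpoint or endpoint, $\log\Psi=\log\Phi$.

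For the upper bound I would construct a near-optimal subspace by decomposition. After composing with a suitable $g\in G$, a vector $x$ with $\|x\|_{l^{\overline k}_{\overline p}}\le 1$ has non-increasing absolute values along each coordinate direction; splitting the coordinate box dyadically in every direction writes $x$ as a sum of block vectors, each lying (after rescaling by a scalar controlled by $\|x\|_{l^{\overline k}_{\overline p}}\le 1$) in some $V^{\overline k}_{\overline s}$ with $\overline s$ a vector of powers of two. Using the subadditivity $d_{\sum m_i}(\sum M_i,X)\le\sum d_{m_i}(M_i,X)$ together with the matching upper estimate for $d_m(V^{\overline k}_{\overline s},l^{\overline k}_{\overline q})$ (the right-hand side of \eqref{vk1kddn}, also available from \cite{vas_anisotr}), it remains to allocate the budget $n$ among the blocks. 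Here $q_j\ge 2$ is essential: the contributions of the dyadic blocks are geometrically monotone in each scale parameter, so up to a constant depending only on $\overline q$ and $d$ the whole sum is controlled by $O_d(1)$ of them — which is what keeps logarithmic factors out. Carrying out the optimal allocation — which, as in the lower-bound optimization, uses the trivial embeddings $l^{k_{\sigma(j)}}_{p_{\sigma(j)}}\hookrightarrow l^{k_{\sigma(j)}}_{q_{\sigma(j)}}$ for $j\le\mu$ and for $j$ past a critical index $t$, spends the budget on the single direction $\sigma(t)$, and balances the two regimes of the $V$-width there — yields $d_n(B^{\overline k}_{\overline p},l^{\overline k}_{\overline q})\lesssim\Phi$.

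The genuine obstacle is the upper bound: the optimization in the lower bound is lengthy but mechanical, whereas in the upper bound one must arrange the dyadic decomposition so that only boundedly many blocks (in terms of $d$) effectively contribute — equivalently so that nothing depending on $\overline k$ or $n$ survives in the final constants — which forces one to isolate the critical scales beforehand and discard the remaining blocks by means of the $V^{\overline k}_{\overline s}$-estimates.
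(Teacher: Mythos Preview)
First, note that this paper does not prove Theorem~B: it is quoted from \cite{vas_anisotr}, and only the lower-bound construction (the choice of $\overline{s}$ in \eqref{s_sigma_i1}--\eqref{s_sigma_i3}) is reproduced in \S3.1 as the $m=1$ case of the intersection theorem. So there is no ``paper's own proof'' here to compare against in full; what follows is an assessment of your argument on its own merits.

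Your reduction step has a genuine gap on the lower-bound side. The inclusion $B^{\overline k}_{\overline p}\subseteq B^{\overline k}_{\overline{p^*}}$ gives $d_n(B^{\overline k}_{\overline p},\,\cdot\,)\le d_n(B^{\overline k}_{\overline{p^*}},\,\cdot\,)$, which reduces only the \emph{upper} estimate; it tells you nothing about $d_n(B^{\overline k}_{\overline p},\,\cdot\,)$ from below. Once you write ``from now on $p_j\ge 2$'' and then use $b_j=1/2-1/p_j\ge 0$ in the minimax, you are implicitly bounding $d_n(B^{\overline k}_{\overline{p^*}})$ from below, not $d_n(B^{\overline k}_{\overline p})$. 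The fix is simple and you have essentially written it already: keep the original $\overline p$ in the inclusion $V^{\overline k}_{\overline s}\subset\bigl(\prod_j s_j^{1/p_j}\bigr)B^{\overline k}_{\overline p}$, and observe that for those $j$ with $p_j<2$ one has both $a_j=1/q_j-1/p_j<0$ and $b_j=1/2-1/p_j<0$, so in your formula $F(\theta)=\theta\log c+\sum_j\bigl((1-\theta)a_j+\theta b_j\bigr)_+\log k_j$ those coordinates contribute $0$ for every $\theta\in[0,1]$ and the optimization coincides with the one for $\overline{p^*}$. With this correction the slope of the last linear piece is $\log c+\sum_{p_j\ge 2}(b_j-a_j)\log k_j$, which need not equal $\tfrac12\log(k_1\cdots k_d/n)$ when some $p_j<2$; but $F(1)$ then matches \eqref{phi_case30}, so the identification $\min_\theta F=\log\Phi$ still holds via the endpoint $\theta=1$. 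Your minimax reformulation of the optimum over $\overline s$ is, incidentally, a cleaner way to identify $\Psi=\Phi$ than the case-by-case choice of $\overline s$ in \eqref{s_sigma_i1}--\eqref{s_sigma_i3}.

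On the upper bound you are candid that the sketch is incomplete, and it is. The dyadic-block idea and subadditivity are the right ingredients, but the claim that ``only $O_d(1)$ blocks effectively contribute'' is exactly where the work lies: one has to specify which blocks receive which share of the budget $n$ and then sum a $d$-fold dyadic series whose exponents change sign across the critical scales; without that computation there is no proof. Since the present paper does not carry out this argument either (it defers to \cite{vas_anisotr}), you would need to consult that reference to complete the upper estimate.
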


Let $\sigma$ be a permutation satisfying \eqref{upor}, and let the numbers $\mu$ and $\nu$ be defined by \eqref{mu_nu_def}. From \eqref{phi_def} it follows that
\begin{align}
\label{phi_case10}
\Phi(\overline{p},\, \overline{q}, \, \overline{k}, \, n) = \prod _{i=1}^\mu k_{\sigma(i)} ^{1/q_{\sigma(i)} - 1/p_{\sigma(i)}} \quad \text{for } n \le \prod _{i=1}^\mu k_{\sigma(i)} \prod _{i=\mu+1}^d k_{\sigma(i)}^{2/q_{\sigma(i)}};
\end{align}
if $\mu+1\le t\le \nu$ and $\prod _{i=1}^{t-1} k_{\sigma(i)} \prod _{i=t}^d k_{\sigma(i)}^{2/q_{\sigma(i)}}\le n \le \prod _{i=1}^t k_{\sigma(i)} \prod _{i=t+1}^d k_{\sigma(i)}^{2/q_{\sigma(i)}}$, then
\begin{align}
\label{phi_case20}
\Phi(\overline{p},\, \overline{q}, \, \overline{k}, \, n) = \prod _{i=1}^{t-1} k_{\sigma(i)} ^{1/q_{\sigma(i)} - 1/p_{\sigma(i)}}\Bigl(n^{-1/2}\prod_{i=1}^{t-1} k_{\sigma(i)} ^{1/2}\prod _{i=t}^d k_{\sigma(i)}^{1/q_{\sigma(i)}}\Bigr)^{\frac{1/p_{\sigma(t)}- 1/q_{\sigma(t)}} {1/2 - 1/q_{\sigma(t)}}};
\end{align}
if $n \ge \prod _{i=1}^\nu k_{\sigma(i)} \prod _{i=\nu+1}^d k_{\sigma(i)} ^{2/q_{\sigma(i)}}$, then
\begin{align}
\label{phi_case30}
\Phi(\overline{p},\, \overline{q}, \, \overline{k}, \, n) = \prod _{i=1}^\nu k_{\sigma(i)} ^{1/q_{\sigma(i)} - 1/p_{\sigma(i)}} \cdot n^{-1/2}\prod_{i=1}^\nu k_{\sigma(i)} ^{1/2}\prod _{i=\nu+1}^d k_{\sigma(i)}^{1/q_{\sigma(i)}}.
\end{align}

\begin{Rem}
\label{t1t1_pr}
If $\mu+1\le t_1\le t \le t_1'\le \nu$, 
\begin{align}
\label{p_sig_t_eq}
\omega _{p_{\sigma(t_1)}, q_{\sigma(t_1)}} = \omega _{p_{\sigma(t_1+1)}, q_{\sigma(t_1+1)}} = \dots = \omega _{p_{\sigma(t'_1)}, q_{\sigma(t'_1)}}, 
\end{align}
$\prod _{i=1}^{t-1} k_{\sigma(i)} \prod _{i=t}^d k_{\sigma(i)}^{2/q_{\sigma(i)}}\le n \le \prod _{i=1}^t k_{\sigma(i)} \prod _{i=t+1}^d k_{\sigma(i)}^{2/q_{\sigma(i)}}$, then
$$
\Phi(\overline{p},\, \overline{q}, \, \overline{k}, \, n) = \prod _{i=1}^{t_1-1} k_{\sigma(i)} ^{1/q_{\sigma(i)} - 1/p_{\sigma(i)}}\Bigl(n^{-1/2}\prod_{i=1}^{t_1-1} k_{\sigma(i)} ^{1/2}\prod _{i=t_1}^d k_{\sigma(i)}^{1/q_{\sigma(i)}}\Bigr)^{\frac{1/p_{\sigma(t_1)}- 1/q_{\sigma(t_1)}} {1/2 - 1/q_{\sigma(t_1)}}}.
$$
\end{Rem}

Indeed,
$$
\Phi(\overline{p},\, \overline{q}, \, \overline{k}, \, n) \stackrel{\eqref{phi_case20}}{=} \prod _{i=1}^{t-1} k_{\sigma(i)} ^{1/q_{\sigma(i)} - 1/p_{\sigma(i)}}\Bigl(n^{-1/2}\prod_{i=1}^{t-1} k_{\sigma(i)} ^{1/2}\prod _{i=t}^d k_{\sigma(i)}^{1/q_{\sigma(i)}}\Bigr)^{\frac{1/p_{\sigma(t)}- 1/q_{\sigma(t)}} {1/2 - 1/q_{\sigma(t)}}} = 
$$
$$
= \prod _{i=1}^{t_1-1} k_{\sigma(i)} ^{1/q_{\sigma(i)} - 1/p_{\sigma(i)}} \prod _{i=t_1}^{t-1} k_{\sigma(i)} ^{(1/q_{\sigma(i)}-1/2) \omega _{p_{\sigma(i)}, q_{\sigma(i)}}} \Bigl(n^{-1/2}\prod_{i=1}^{t-1} k_{\sigma(i)} ^{1/2}\prod _{i=t}^d k_{\sigma(i)}^{1/q_{\sigma(i)}}\Bigr)^{\omega _{p_{\sigma(t)}, q_{\sigma(t)}}} \stackrel{\eqref{p_sig_t_eq}}{=} 
$$
$$
= \prod _{i=1}^{t_1-1} k_{\sigma(i)} ^{1/q_{\sigma(i)} - 1/p_{\sigma(i)}} \Bigl( \prod _{i=t_1}^{t-1} k_{\sigma(i)}^{1/q_{\sigma(i)}-1/2} \cdot n^{-1/2}\prod_{i=1}^{t-1} k_{\sigma(i)} ^{1/2}\prod _{i=t}^d k_{\sigma(i)}^{1/q_{\sigma(i)}}\Bigr)^{\omega _{p_{\sigma(t_1)}, q_{\sigma(t_1)}}} = 
$$
$$
= \prod _{i=1}^{t_1-1} k_{\sigma(i)} ^{1/q_{\sigma(i)} - 1/p_{\sigma(i)}}\Bigl(n^{-1/2}\prod_{i=1}^{t_1-1} k_{\sigma(i)} ^{1/2}\prod _{i=t_1}^d k_{\sigma(i)}^{1/q_{\sigma(i)}}\Bigr)^{\frac{1/p_{\sigma(t_1)}- 1/q_{\sigma(t_1)}} {1/2 - 1/q_{\sigma(t_1)}}}.
$$

The function $\Phi$ for $q_j>2$ ($j=1, \, \dots, \, d$) can be written as follows:
\begin{align}
\label{phi_case1} \Phi(\overline{p},\, \overline{q}, \, \overline{k}, \, n) =
 \prod _{p_j\ge q_j} k_j^{1/q_j - 1/p_j} \quad \text{for } n\le \prod _{p_j\ge q_j} k_j \prod _{p_j< q_j} k_j^{2/q_j};
\end{align}
\begin{align}
\label{phi_case2}
\begin{array}{c}
\Phi(\overline{p},\, \overline{q}, \, \overline{k}, \, n) = \prod_{\omega_{p_j,q_j}<\omega_{p_t,q_t}} k_j^{1/q_j - 1/p_j} (n^{-1/2} \prod _{\omega_{p_j,q_j}< \omega_{p_t,q_t}} k_j^{1/2}\prod _{\omega_{p_j,q_j}\ge \omega_{p_t,q_t}} k_j^{1/q_j})^{\omega _{p_t,q_t}} \\
\text{ for } \omega_{p_t,q_t}\in (0, \, 1) \text{ and} \\ \prod _{\omega_{p_j,q_j}< \omega_{p_t,q_t}} k_j \prod _{\omega_{p_j,q_j}\ge \omega_{p_t,q_t}} k_j^{2/q_j} \le n \le \prod _{\omega_{p_j,q_j}\le \omega_{p_t,q_t}} k_j \prod _{\omega_{p_j,q_j}> \omega_{p_t,q_t}} k_j^{2/q_j},
\end{array}
\end{align}
\begin{align}
\label{phi_case3}
\Phi(\overline{p},\, \overline{q}, \, \overline{k}, \, n) = \prod _{p_j>2} k_j^{1/q_j-1/p_j} n^{-1/2} \prod _{p_j>2} k_j^{1/2} \prod _{p_j\le 2} k_j^{1/q_j} \quad \text{for } n \ge \prod _{p_j>2} k_j \prod _{p_j\le 2} k_j^{2/q_j}.
\end{align}

The following lemmas are proved in \cite{vas_plq}.
\begin{Lem}
\label{conv_hull} {\rm (see \cite{vas_plq}).}
Let $\xi_1, \, \dots, \, \xi_m \in \R^{m-1}$ be affinely independent points, let $\eta \in \R^{m-1}$, and let the points $\{\xi_1, \, \dots, \, \xi_{j-1}, \, \xi_{j+1}, \, \dots, \, \xi_m, \, \eta\}$ be affinely independent for each $j\in \{1, \, \dots, \, m\}$. We set $\Delta = {\rm conv}\, \{\xi_1, \, \dots, \, \xi_m\}$. Let $a\in {\rm int}\, \Delta$. Then there is $i\in \{1, \, \dots, \, m\}$ such that $a\in {\rm conv}\, \{\xi_1, \, \dots, \, \xi_{i-1}, \, \xi_{i+1}, \, \dots, \, \xi_m, \, \eta\}$.
\end{Lem}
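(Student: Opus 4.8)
The plan is to argue entirely with barycentric coordinates relative to the $(m-1)$-simplex $\Delta = {\rm conv}\,\{\xi_1, \, \dots, \, \xi_m\}$. Since $\xi_1, \, \dots, \, \xi_m$ are affinely independent in $\R^{m-1}$, every point of $\R^{m-1}$ has unique barycentric coordinates with respect to them; write $a = \sum_{j=1}^m \mu_j \xi_j$ and $\eta = \sum_{j=1}^m \beta_j \xi_j$ with $\sum_{j} \mu_j = \sum_{j} \beta_j = 1$. Because $a \in {\rm int}\,\Delta$, we have $\mu_j > 0$ for all $j$. Because, for each $k$, the points $\{\xi_j : j \ne k\} \cup \{\eta\}$ are affinely independent, $\eta$ cannot be an affine combination of $\{\xi_j : j \ne k\}$, i.e. $\beta_k \ne 0$; hence $\beta_j \ne 0$ for every $j$.

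Next I would make explicit when $a$ lies in the simplex $\Delta_i = {\rm conv}\,\{\xi_1, \, \dots, \, \xi_{i-1}, \, \xi_{i+1}, \, \dots, \, \xi_m, \, \eta\}$, which is a genuine $(m-1)$-simplex by hypothesis. Using $\beta_i \ne 0$ to solve $\eta = \sum_j \beta_j \xi_j$ for $\xi_i$ and substituting into $a = \sum_j \mu_j \xi_j$, one obtains
$$
a = \frac{\mu_i}{\beta_i}\,\eta + \sum_{j \ne i}\Bigl(\mu_j - \frac{\mu_i \beta_j}{\beta_i}\Bigr)\xi_j,
$$
and a one-line check shows these coefficients sum to $1$, so they are the barycentric coordinates of $a$ with respect to $\{\xi_j : j \ne i\} \cup \{\eta\}$. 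Hence $a \in \Delta_i$ if and only if $\mu_i/\beta_i \ge 0$ and $\mu_j - \mu_i\beta_j/\beta_i \ge 0$ for all $j \ne i$. Since $\mu_i > 0$, the first condition forces $\beta_i > 0$; and once $\beta_i > 0$, the remaining inequalities hold automatically for every $j$ with $\beta_j < 0$ and reduce to $\mu_j/\beta_j \ge \mu_i/\beta_i$ for every $j$ with $\beta_j > 0$. In short: $a \in \Delta_i$ precisely when $\beta_i > 0$ and $\mu_i/\beta_i \le \mu_j/\beta_j$ for all $j$ with $\beta_j > 0$.

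Finally I would produce such an index $i$. The set $P = \{j : \beta_j > 0\}$ is nonempty, since $\sum_j \beta_j = 1 > 0$. Choose $i \in P$ minimizing $\mu_j/\beta_j$ over $j \in P$. Then $\beta_i > 0$ and $\mu_i/\beta_i \le \mu_j/\beta_j$ for all $j \in P$, so by the criterion established in the previous step $a \in \Delta_i$, which is exactly the assertion of the lemma.

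This is essentially routine linear algebra; the only point requiring care is the sign bookkeeping in the criterion for $a \in \Delta_i$, in particular identifying which inequalities are vacuous (the indices $j$ with $\beta_j < 0$) and verifying that the nonnegativity requirement $\mu_i/\beta_i \ge 0$ genuinely upgrades to $\beta_i > 0$ by using the strict positivity of $\mu_i$ coming from $a$ being an interior point. No substantial obstacle is expected.
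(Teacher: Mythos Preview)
The paper does not actually prove this lemma; it only cites \cite{vas_plq} and uses the result. So there is no proof in the paper to compare against.

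That said, your barycentric-coordinate argument is correct and self-contained. The two points that matter are: (i) the hypothesis that $\{\xi_j:j\ne k\}\cup\{\eta\}$ is affinely independent for every $k$ is exactly what forces $\beta_k\ne 0$ for every $k$, so the case analysis on the sign of $\beta_j$ is exhaustive; and (ii) once you know $\mu_j>0$ for all $j$ (from $a\in{\rm int}\,\Delta$) and $\sum_j\beta_j=1$, the set $P=\{j:\beta_j>0\}$ is nonempty and the index $i\in P$ minimizing $\mu_j/\beta_j$ over $P$ makes every barycentric coordinate of $a$ in $\Delta_i$ nonnegative. Your verification of the coefficient sum and of the sign bookkeeping for indices $j$ with $\beta_j<0$ is accurate. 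No gaps.
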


\begin{Lem}
\label{inter_incl} {\rm (see \cite{vas_plq}).} Let $\overline{k}=(k_1, \, \dots, \, k_d)\in \N^d$, $m\in \N$, $\nu_j>0$, $\overline{p}^j\in [1, \, \infty]^d$, $\lambda_j\ge 0$, $1\le j\le m$, $\sum \limits _{j=1}^m \lambda_j=1$, and let the vector $\overline{p}$ be defined by the equation
$\frac{1}{\overline{p}} = \sum \limits _{j=1}^m \frac{\lambda_j}{\overline{p}^j}$.
Then
$$
\cap _{j=1}^m \nu_j B^{\overline{k}}_{\overline{p}^j} \subset \nu_1^{\lambda_1} \dots \nu_m^{\lambda_m}B^{\overline{k}}_{\overline{p}}.
$$
\end{Lem}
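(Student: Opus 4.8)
The plan is to prove Lemma~\ref{inter_incl} by induction on $d$, the engine being a multilinear version of the generalized H\"older inequality in the one-dimensional spaces $l_p^N$. First I would reduce to the case $\lambda_j>0$ for all $j$: if some $\lambda_{j_0}=0$, then $\nu_{j_0}^{\lambda_{j_0}}=1$ and the term $\lambda_{j_0}/\overline p^{\,j_0}$ contributes nothing to $1/\overline p=\sum_j\lambda_j/\overline p^{\,j}$, so discarding the ball $\nu_{j_0}B^{\overline k}_{\overline p^{\,j_0}}$ only enlarges the intersection and does not change the right-hand side; hence it suffices to treat the remaining balls, for which the corresponding $\lambda_j$ still sum to $1$.

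Second I would establish the one-dimensional bound: for $z^1,\dots,z^m\in\R^N$, exponents $p_1,\dots,p_m\in[1,\infty]$, weights $\lambda_j>0$ with $\sum_j\lambda_j=1$ and $1/p=\sum_j\lambda_j/p_j$, one has $\big\|\big(\prod_j|z^j_i|^{\lambda_j}\big)_{i=1}^N\big\|_{l_p^N}\le\prod_j\|z^j\|_{l_{p_j}^N}^{\lambda_j}$. When all $p_j<\infty$ this is precisely H\"older's inequality applied to $\sum_i\prod_j|z^j_i|^{\lambda_jp}$ with exponents $r_j=p_j/(\lambda_jp)$, which are conjugate since $\sum_j r_j^{-1}=p\sum_j\lambda_j/p_j=1$; it gives $\sum_i\prod_j|z^j_i|^{\lambda_jp}\le\prod_j\big(\sum_i|z^j_i|^{p_j}\big)^{\lambda_jp/p_j}$, and taking $p$-th roots yields the claim. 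Indices with $p_j=\infty$ are handled uniformly by the pull-out estimate $|z^j_i|^{\lambda_jp}\le\|z^j\|_{l_\infty^N}^{\lambda_jp}$ (such terms contribute $0$ to $1/p$), and the all-$\infty$ case is termwise trivial. Specializing $z^1=\dots=z^m=x$ collapses $\prod_j|x_i|^{\lambda_j}$ to $|x_i|$ and gives exactly the case $d=1$ of the lemma.

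For the inductive step, assuming the lemma in dimension $d-1$, I would fix $x=(x_{j_1,\dots,j_d})\in\bigcap_{j=1}^m\nu_jB^{\overline k}_{\overline p^{\,j}}$. For each $j_d$ set the slice $x^{(j_d)}=(x_{j_1,\dots,j_{d-1},j_d})_{1\le j_s\le k_s,\,s<d}$ and put $y^j_{j_d}=\|x^{(j_d)}\|_{l^{(k_1,\dots,k_{d-1})}_{(p^j_1,\dots,p^j_{d-1})}}$, $y_{j_d}=\|x^{(j_d)}\|_{l^{(k_1,\dots,k_{d-1})}_{(p_1,\dots,p_{d-1})}}$; by the recursive definition of the anisotropic norm, $\|x\|_{l^{\overline k}_{\overline p^{\,j}}}=\|(y^j_{j_d})_{j_d=1}^{k_d}\|_{l^{k_d}_{p^j_d}}\le\nu_j$ and $\|x\|_{l^{\overline k}_{\overline p}}=\|(y_{j_d})_{j_d=1}^{k_d}\|_{l^{k_d}_{p_d}}$. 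Since $x^{(j_d)}\in\bigcap_j y^j_{j_d}B^{(k_1,\dots,k_{d-1})}_{(p^j_1,\dots,p^j_{d-1})}$ and the relations $1/p_i=\sum_j\lambda_j/p^j_i$ also hold for $i\le d-1$ with the same $\lambda_j$, the inductive hypothesis (the subcase $x^{(j_d)}=0$ being immediate) gives $y_{j_d}\le\prod_j(y^j_{j_d})^{\lambda_j}$. Using coordinatewise monotonicity of $\|\cdot\|_{l^{k_d}_{p_d}}$ and then the one-dimensional bound of the previous paragraph applied to the nonnegative vectors $(y^j_{j_d})_{j_d}$ with exponents $p^j_d$, $p_d$, one obtains
\[
\|(y_{j_d})_{j_d}\|_{l^{k_d}_{p_d}}\le\Big\|\big(\textstyle\prod_j(y^j_{j_d})^{\lambda_j}\big)_{j_d}\Big\|_{l^{k_d}_{p_d}}\le\prod_j\|(y^j_{j_d})_{j_d}\|_{l^{k_d}_{p^j_d}}^{\lambda_j}\le\prod_j\nu_j^{\lambda_j},
\]
which is the asserted bound $\|x\|_{l^{\overline k}_{\overline p}}\le\nu_1^{\lambda_1}\cdots\nu_m^{\lambda_m}$.

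The whole content is the generalized H\"older inequality, and the iterated definition of $\|\cdot\|_{l^{\overline k}_{\overline p}}$ is tailor-made for induction on $d$, so I expect no genuine obstacle. The only points requiring attention are the presence of coordinates with $p^j_i=\infty$ (dealt with once and for all by the pull-out device, which survives the induction) and the coordinatewise monotonicity of the iterated norm, itself a one-line induction on $d$.
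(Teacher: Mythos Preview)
Your argument is correct: the iterated H\"older inequality via induction on $d$ is exactly the natural proof, and you have handled the edge cases ($\lambda_j=0$, $p_j=\infty$, vanishing slices) cleanly. The paper itself does not prove this lemma but merely cites \cite{vas_plq}, so there is no in-paper proof to compare against; your route is the standard one and almost certainly coincides with what the cited reference does.
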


From Lemma \ref{inter_incl} and Theorem \ref{one_ball_est} we obtain the upper estimate in Theorem \ref{main}:
\begin{align}
\label{up_est_main} d_n(M, \, l_{\overline{q}}^{\overline{k}}) \underset{\overline{q}}{\lesssim} \min _{1\le m\le d+1} \inf _{\overline{\alpha} \in {\cal N}_m,\, Z \in {\cal Z}_m} \nu_{\alpha_1}^{\lambda_1(\overline{\alpha},\, Z)} \dots \nu_{\alpha_m}^{\lambda_m(\overline{\alpha},\, Z)} \Phi(\overline{\theta}(\overline{\alpha},\, Z), \, \overline{q}, \, \overline{k}, \, n).
\end{align}

\section{Proof of the lower estimate in the case of finite set $A$, $q_j>2$ and general position}

Throughout this section we assume that the set $A$ is finite and $q_j>2$, $1\le j\le d$.

\begin{Def}
\label{zm_def1} Let $Z \subset \R^d$, $1\le m\le d+1$. We say that $Z\in \tilde {\cal Z}_m$ if there are a set $I\subset \{1, \, \dots, \, d\}$ and a number $L\in \Z_+$ with the following properties:
\begin{enumerate}
\item $I=I_2\sqcup I_q \sqcup I_{\omega_1}\sqcup \dots \sqcup I_{\omega_L}$, $I_{\omega_t} = \{i_{1,t}, \, \dots, \, i_{l_t,t}\}$,
\begin{align}
\label{card_i_zm} \# I_2 + \# I_q +\sum \limits _{t=1}^L (\# I_{\omega_t}-1) = m-1, \quad \# I_{\omega_t}\ge 2, \; t=1, \, \dots, \, L;
\end{align}
\item the set $Z$ is defined by the conditions
\begin{align}
\label{pl_tr_simpl00}
\begin{array}{c}
x_i=1/q_i, \; i\in I_q; \quad x_i = 1/2, \; i\in I_2;\\ \omega' _{1/x_{i_{1,t}}, q_{i_{1,t}}} = \dots = \omega' _{1/x_{i_{l_t,t}}, q_{i_{l_t,t}}} \in (0, \, 1), \; t=1, \, \dots, \, L.
\end{array}
\end{align}
\end{enumerate}
\end{Def}
From \eqref{card_i_zm}, \eqref{pl_tr_simpl00} it follows that, for $Z\in \tilde {\cal Z}_m$, the plane ${\rm aff}\, Z$ has codimension $m-1$.

\begin{Def}
\label{nm_def1}
Let $1\le m\le d+1$, $\overline{\alpha} = (\alpha_1, \, \dots, \, \alpha_m) \in A^m$. We say that $\overline{\alpha}\in \tilde{\cal N}_m$ if there are numbers $\lambda_j>0$, $j=1, \, \dots, \, m$, $\sum \limits _{j=1}^m \lambda_j=1$, and a set $Z\in \tilde{\cal Z}_m$ such that $\sum \limits _{j=1}^m \frac{\lambda_j}{\overline{p}_{\alpha_j}} \in Z$, and, in addition, the planes ${\rm aff}\, \{1/\overline{p}_{\alpha_j}\} _{j=1}^m$ and ${\rm aff}\, Z$ are complementary.
\end{Def}

For $\overline{\alpha} \in \tilde {\cal N}_m$, $Z\in \tilde {\cal Z}_m$, we denote by $\lambda_j(\overline{\alpha}, \, Z)$ the corresponding $\lambda_j$ from Definition \ref{nm_def1} (if such $\lambda_j$ are well-defined for the set $Z$). We also define the vector $\overline{\theta}(\overline{\alpha}, \, Z)$ by the equation $\frac{1}{\overline{\theta}(\overline{\alpha}, \, Z)} = \sum \limits _{j=1}^m \frac{\lambda_j(\overline{\alpha}, \, Z)}{\overline{p}_{\alpha_j}}$.

We write $\overline{r} \in \Theta$ if $\overline{r} = \overline{\theta}(\overline{\alpha}, \, Z)$ for some $m\in \{1, \, \dots, \, d+1\}$, $\overline{\alpha} \in \tilde {\cal N}_m$, $Z \in \tilde {\cal Z}_m$.

We will prove that in the case of general position (see Definition \ref{gen_pos} below) the following estimate holds:
\begin{align}
\label{dn_low_est_mod}
d_n(M, \, l_{\overline{q}}^{\overline{k}}) \underset{d}{\gtrsim} \min _{1\le m\le d+1} \min _{\overline{\alpha} \in \tilde{\cal N}_m, \, Z \in \tilde{\cal Z}_m} \nu_{\alpha_1}^{\lambda_1(\overline{\alpha}, \, Z)} \dots \nu_{\alpha_m}^{\lambda_m(\overline{\alpha}, \, Z)} \Phi(\overline{\theta}(\overline{\alpha}, \, Z), \, \overline{q}, \, \overline{k}, \, n),
\end{align}
and the minimum in the right-hand side can be attained only if one of the following conditions holds: a) $I=I_2$, b) $I=I_q$, c) $I=I_{\omega_1}$. This together with \eqref{up_est_main} implies \eqref{main_eq}.

In what follows, we write $\log x = \log_2 x$. 

Let $2\le m\le d+1$, $I\subset \{1, \, \dots, \, m\}$, $\# I = m$. We say that ${\cal A}=(A_{j,i})_{1\le j\le m, \, i\in I} \in {\cal M}_{m,I}$ if, for each $j\in \{1, \, \dots, \, m\}$, there are an index $i_*(j)\in I$ and a partition $\{1, \, \dots, \, d\} \backslash i_*(j) = T_{j,1}\sqcup T_{j,2}$, such that
\begin{align}
\label{a_mm_i} A_{j,i} = \begin{cases} \Bigl(n^{1/2} \prod _{i\in T_{j,1}}k_i^{-1/2} \cdot k_{i_*(j)}^{-1/q_{i_*(j)}}\prod _{i\in T_{j,2}}k_i^{-1/q_i}\Bigr) ^{\frac{1}{1/2-1/q_{i_*(j)}}}, & i=i_*(j), \\ k_i, & i\in T_{j,1}\cap I, \\ 1, & i\in T_{j,2}\cap I.\end{cases}
\end{align}
We say that ${\cal B}=(B_{j,i})_{2\le j\le m,\, i\in I}\in \hat {\cal M}_{m,I}$ if there is a matrix ${\cal A}=(A_{j,i})_{1\le j\le m, \, i\in I}\in {\cal M}_{m,I}$ such that
\begin{align}
\label{b_mm_i} B_{j,i} = \log A_{j,i} - \log A_{1,i}, \quad 2\le j\le m, \; i\in I
\end{align}
and the range of ${\cal B}$ is $m-1$.

Let $I=\{i_1, \, \dots, \, i_l\}\subset \{1, \, \dots, \, d\}$ be a nonempty subset, $i_1<\dots<i_l$. Given $x=(x_1, \, \dots, \, x_d)\in \R^d$, we write $x_I=(x_{i_1}, \, \dots, \, x_{i_l})\in \R^l$.

\begin{Def}
\label{gen_pos} We say that the pairs $\{(\nu_\alpha, \, \overline{p}_\alpha)\}_{\alpha \in A}$ are in general position if the following conditions hold:
\begin{enumerate}
\item For any number $m\in \{2, \, \dots, \, d+1\}$, any set $I\subset \{1, \, \dots, \, d\}$ such that $\# I = m-1$, and for any different $\alpha_1, \, \dots, \, \alpha_m\in A$ the points $\{(1/\overline{p}_{\alpha_j})_I\}_{j=1}^m$ are affinely independent.

\item Let $L\in \Z_+$, let $I=I_2\sqcup I_q\sqcup I_{\omega_1}\sqcup \dots \sqcup I_{\omega_L}$ satisfy \eqref{card_i_zm}, $i_t\in I_{\omega_t}$ $(1\le t\le L)$, and let $Z\subset \R^d$ be an affine plane defined by the equations
\begin{align}
\label{pl_tr_simpl}
\begin{array}{c}
x_i=1/2, \quad i\in I_2; \quad x_i=1/q_i, \quad i\in I_q; \\ \omega'_{1/x_i,q_i} = \omega'_{1/x_{i_t},q_{i_t}}, \quad \forall i\in I_{\omega_t}, \; 1\le t\le L.
\end{array}
\end{align}
Then for any different $\beta_1, \, \dots, \, \beta_m\in A$ the affine hull of $\{1/\overline{p}_{\beta_j}\}_{j=1}^m$ and $Z$ are complementary. If $k<m$, then for any $\beta_1, \, \dots, \, \beta_k\in A$ the sets ${\rm conv}\, \{1/\overline{p}_{\beta_j}\}_{j=1}^k$ and $Z$ do not intersect.

\item For each $m\in \{2, \, \dots, \, d+1\}$, each matrix ${\cal B}=(B_{j,i})_{2\le j\le m, \, i\in I}\in \hat {\cal M}_{m,I}$ and any different $\beta_1, \, \dots, \, \beta_m\in A$ the matrix $\Bigl(\sum \limits _{i\in I}(1/p_{\beta_k,i}-1/p_{\beta_1,i})B_{j,i}\Bigr)_{2\le k\le m, \, 2\le j\le m}$ is nondegenerated.

\item In the right-hand side of \eqref{dn_low_est_mod} the minimum is attained only at one element $(\overline{\alpha}, \, Z)$.
\end{enumerate}
\end{Def}

In what follows in this section we suppose that $\{(\nu_\alpha, \, \overline{p}_\alpha)\}_{\alpha \in A}$ are in  general position.

Let 
\begin{align}
\label{psi_def} \Psi = \min _{1\le m\le d+1} \min _{\overline{\alpha} \in \tilde{\cal N}_m, \, Z \in \tilde{\cal Z}_m} \nu_{\alpha_1}^{\lambda_1(\overline{\alpha}, \, Z)} \dots \nu_{\alpha_m}^{\lambda_m(\overline{\alpha}, \, Z)} \Phi(\overline{\theta}(\overline{\alpha}, \, Z), \, \overline{q}, \, \overline{k}, \, n).
\end{align}
We prove that if 
\begin{align}
\label{psi_eq_nu_phi}
\Psi = \nu_{\alpha_1}^{\lambda_1(\overline{\alpha}, \, Z)} \dots \nu_{\alpha_m}^{\lambda_m(\overline{\alpha}, \, Z)} \Phi(\overline{\theta}(\overline{\alpha}, \, Z), \, \overline{q}, \, \overline{k}, \, n)
\end{align}
for some $\overline{\alpha} \in \tilde{\cal N}_m$, $Z \in \tilde {\cal Z}_m$, then
\begin{align}
\label{dn_est_psi} d_n(M, \, l_{\overline{q}}^{\overline{k}}) \underset{d}{\gtrsim} \nu_{\alpha_1}^{\lambda_1(\overline{\alpha}, \, Z)} \dots \nu_{\alpha_m}^{\lambda_m(\overline{\alpha}, \, Z)} \Phi(\overline{\theta}(\overline{\alpha}, \, Z), \, \overline{q}, \, \overline{k}, \, n),
\end{align}
and, in addition, only the case $\alpha \in {\cal N}_m$, $Z \in {\cal Z}_m$ is possible.

\subsection{The case $m=1$}

Suppose that \eqref{psi_eq_nu_phi} holds with $m=1$; i.e.,
\begin{align}
\label{psi_eq_nu_phi1} \Psi = \nu_{\alpha} \Phi(\overline{p}_{\alpha}, \, \overline{q}, \, \overline{k}, \, n)
\end{align}
for some $\alpha\in A$. Then $I=\varnothing$. We show that
\begin{align}
\label{dn_est_psi1} d_n(M, \, l_{\overline{q}}^{\overline{k}}) \gtrsim \nu_{\alpha} \Phi(\overline{p}_{\alpha}, \, \overline{q}, \, \overline{k}, \, n).
\end{align}
To this end, we consider the set
\begin{align}
\label{w_def}
W = \nu_\alpha u_1^{-1/p_{\alpha_1}}\dots u_d^{-1/p_{\alpha_d}}V^{\overline{k}}_{\overline{u}}
\end{align}
(see \eqref{vks_defin}), where $\overline{u}=(u_1, \, \dots, \, u_d)$, $u_i=\lceil s_i \rceil$ or $u_i = \lfloor s_i\rfloor$, and the numbers $s_i$, such that $1\le s_i\le k_i$ and $W\subset 2M$, will be defined below. After that we use Theorem \ref{vks_low_est}.

Let the permutation $\sigma \in S_d$ and the numbers $\mu\in \{0, \, \dots, \, d\}$, $\nu\in \{0, \, \dots, \, d\}$ be defined by conditions \eqref{upor} and \eqref{mu_nu_def}. By the generality of position (see assertion 2 of Definition \ref{gen_pos}), $p_{\alpha, \sigma(i)}>q_{\sigma(i)}$ for $1\le i\le \mu$, $2<p_{\alpha, \sigma(i)}<q_{\sigma(i)}$ for $\mu+1\le i\le \nu$, $p_{\alpha, \sigma(i)}<2$ for $\nu+1\le i\le d$; in addition, $\omega_{p_{\alpha, \sigma(\mu+1)},q_{\sigma(\mu+1)}}< \dots < \omega_{p_{\alpha, \sigma(\nu)},q_{\sigma(\nu)}}$.

The numbers $s_i$ are defined as in \cite[\S 2, cases after Remark 3]{vas_anisotr}:
\begin{enumerate}
\item for $n\le k_{\sigma(1)} \dots k_{\sigma(\mu)}k_{\sigma(\mu+1)}^{2/q_{\sigma(\mu+1)}} \dots k_{\sigma(d)}^{2/q_{\sigma(d)}}$
\begin{align}
\label{s_sigma_i1}
s_{\sigma(i)} = \begin{cases} k_{\sigma(i)}, & i\le \mu, \\ 1, & i>\mu; \end{cases}
\end{align}
\item for $t\in \{\mu+1, \, \dots, \, \nu\}$, $$k_{\sigma(1)} \dots k_{\sigma(t-1)}k_{\sigma(t)}^{2/q_{\sigma(t)}} \dots k_{\sigma(d)}^{2/q_{\sigma(d)}}< n \le k_{\sigma(1)} \dots k_{\sigma(t)}k_{\sigma(t+1)}^{2/q_{\sigma(t+1)}} \dots k_{\sigma(d)}^{2/q_{\sigma(d)}}$$
\begin{align}
\label{s_sigma_i2}
s_{\sigma(i)} = \begin{cases} k_{\sigma(i)}, & i\le t-1, \\ (n^{1/2}k_{\sigma(1)}^{-1/2}\dots k_{\sigma(t-1)}^{-1/2} k_{\sigma(t)}^{-1/q_{\sigma(t)}}\dots k_{\sigma(d)} ^{-1/q_{\sigma(d)}})^{\frac{1}{1/2-1/q_{\sigma(t)}}}, & i=t, \\ 1, & i\ge t+1; \end{cases}
\end{align}
\item for $n> k_{\sigma(1)} \dots k_{\sigma(\nu)} k_{\sigma(\nu+1)} ^{2/q_{\sigma(\nu+1)}} \dots k_{\sigma(d)} ^{2/q_{\sigma(d)}}$
\begin{align}
\label{s_sigma_i3}
s_{\sigma(i)} = \begin{cases} k_{\sigma(i)}, & i\le \nu, \\ 1,  & i>\nu. \end{cases}
\end{align}
\end{enumerate}

We can see from the definition that $1\le s_i\le k_i$, the numbers $s_i$ are integer, except, possibly, the number $s_{\sigma(t)}$ in case 2; then we put $u_{\sigma(t)} = \lceil s_{\sigma(t)}\rceil$. For other indices $i$ we take $u_i=s_i$ and define the set $W$ by formula \eqref{w_def}. In \cite[\S 2]{vas_anisotr} it was proved that
$$
d_n(W, \, l_{\overline{q}}^{\overline{k}}) \gtrsim \nu_{\alpha} \Phi(\overline{p}_{\alpha}, \, \overline{q}, \, \overline{k}, \, n).
$$
Hence, in order to prove \eqref{dn_est_psi1}, it remains to check the inclusion $W\subset 2M$; to this end, it suffices to show that
\begin{align}
\label{incl_ineq} \nu_{\alpha} s_1^{1/p_{\beta,1}-1/p_{\alpha,1}} \dots s_d^{1/p_{\beta,d}-1/p_{\alpha,d}} \le \nu_{\beta}, \quad \beta \in A.
\end{align}

For $\beta=\alpha$, this inequality is trivial.

Let $\beta \ne \alpha$. We set
$$
D = \{(1/p_1, \, \dots, \, 1/p_d)\in [0, \, 1]^d:\; p_{\sigma(i)}>q_{\sigma(i)}, \; 1\le i\le \mu; $$$$2< p_{\sigma(i)}<q_{\sigma(i)}, \; \mu+1\le i\le \nu;\; p_{\sigma(i)}<2, \; \nu+1\le i\le d;$$$$ \omega_{p_{\sigma(\mu+1)},q_{\sigma(\mu+1)}} < \dots < \omega_{p_{\sigma(\nu)},q_{\sigma(\nu)}}\}.
$$
The set $D$ is open in $[0, \, 1]^d$ and contains the point $1/\overline{p}_{\alpha}$. Let
$$
\lambda = \sup \Bigl\{\tau \in [0, \, 1]:\; \frac{1-\tau}{\overline{p}_{\alpha}} + \frac{\tau}{\overline{p}_{\beta}}\in D\Bigr\};
$$
the vector $\overline{\theta}=(\theta_1, \, \dots, \, \theta_d)$ is defined by the equation
\begin{align}
\label{1_theta_1lpa}
\frac{1}{\overline{\theta}} = \frac{1-\lambda}{\overline{p}_{\alpha}} + \frac{\lambda} {\overline{p}_{\beta}}.
\end{align}
Then $\lambda >0$.

We prove that $\overline{\theta} \in \Theta$.

For $\lambda = 1$, we have $\frac{1}{\overline{\theta}} = \frac{1}{\overline{p}_\beta}$; it remains to notice that $\beta\in \tilde{\cal N}_1$, $\R^d \in \tilde {\cal Z}_1$.
For $\lambda<1$, from the definition of $D$ and by the generality of position (see assertion 2 of Definition \ref{gen_pos}) there is the unique $i_*\in \{1, \, \dots, \, d\}$ such that $\theta_{\sigma(i_*)} = q_{\sigma(i_*)}$, $\theta_{\sigma(i_*)} = 2$ or $0< \omega _{\theta_{\sigma(i_*)},q_{\sigma(i_*)}} = \omega _{\theta_{\sigma(i_*+1)},q_{\sigma(i_*+1)}}<1$; for other indices $i$ the inequalities are the same as in the definition of $D$. Therefore, $(\alpha, \, \beta) \in \tilde{\cal N}_2$, $\overline{\theta}\in \Theta$.

From \eqref{psi_def} and \eqref{psi_eq_nu_phi1} we get
\begin{align}
\label{nu_a_nu_b}
\nu_\alpha \Phi(\overline{p}_\alpha, \, \overline{q}, \, \overline{k}, \, n) \le \nu_\alpha^{1-\lambda} \nu_\beta ^{\lambda} \Phi(\overline{\theta}, \, \overline{q}, \, \overline{k}, \, n).
\end{align}
From the definition of $D$, $\lambda$ and $\overline{\theta}$ it follows that
$$
\omega_{\theta_{\sigma(1)},q_{\sigma(1)}} \le \omega_{\theta_{\sigma(2)},q_{\sigma(2)}} \le \dots \le \omega_{\theta_{\sigma(d)},q_{\sigma(d)}}, \quad \theta_{\sigma(i)} \ge q_{\sigma(i)} \text{ for }i\le \mu,
$$
$$
2\le \theta_{\sigma(i)}\le q_{\sigma(i)} \text{ for }\mu+1\le i\le \nu, \quad \theta_{\sigma(i)}\le 2 \text{ for } i\ge \nu+1.
$$

\begin{enumerate}
\item Let $n\le k_{\sigma(1)} \dots k_{\sigma(\mu)} k_{\sigma(\mu+1)} ^{2/q_{\sigma(\mu+1)}} \dots k_{\sigma(d)}^{2/q_{\sigma(d)}}$. Applying \eqref{phi_case10}, \eqref{1_theta_1lpa}, we get that \eqref{nu_a_nu_b} can be written as follows: 
$$
\left(\frac{\nu_\alpha}{\nu_\beta}\right) ^\lambda \prod_{i=1}^\mu k_{\sigma(i)}^{1/q_{\sigma(i)} - 1/p_{\alpha,\sigma(i)}} \le \prod_{i=1}^\mu k_{\sigma(i)}^{1/q_{\sigma(i)} - (1-\lambda)/p_{\alpha,\sigma(i)} -\lambda/ p_{\beta, \sigma(i)}}; 
$$
hence,
$$
\nu_\alpha \prod_{i=1}^\mu k_{\sigma(i)}^{1/p_{\beta,\sigma(i)} - 1/p_{\alpha,\sigma(i)}} \le \nu_\beta.
$$
This together with \eqref{s_sigma_i1} yields \eqref{incl_ineq}.

\item Let $\mu+1\le t\le \nu$,
$$
k_{\sigma(1)} \dots k_{\sigma(t-1)} k_{\sigma(t)} ^{2/q_{\sigma(t)}} \dots k_{\sigma(d)} ^{2/q_{\sigma(d)}} < n \le k_{\sigma(1)} \dots k_{\sigma(t)} k_{\sigma(t+1)} ^{2/q_{\sigma(t+1)}} \dots k_{\sigma(d)} ^{2/q_{\sigma(d)}}.
$$
From \eqref{phi_case10}, \eqref{phi_case20}, \eqref{phi_case30}, \eqref{1_theta_1lpa} it follows that \eqref{nu_a_nu_b} can be written as follows: 
$$
\left(\frac{\nu_\alpha}{\nu_\beta}\right) ^\lambda \prod _{i=1}^{t-1} k_{\sigma(i)} ^{1/q_{\sigma(i)} - 1/p_{\alpha, \sigma(i)}} \Bigl( n^{1/2} \prod _{i=1}^{t-1} k_{\sigma(i)} ^{-1/2} \prod _{i=t}^d k_{\sigma(i)} ^{-1/q_{\sigma(i)}}\Bigr) ^{\frac{1/q_{\sigma(t)} - 1/p_{\alpha, \sigma(t)}}{1/2-1/q_{\sigma(t)}}} \le 
$$
$$
\le \prod _{i=1}^{t-1} k_{\sigma(i)} ^{1/q_{\sigma(i)} - (1-\lambda)/p_{\alpha, \sigma(i)}-\lambda/p_{\beta, \sigma(i)}} \Bigl( n^{1/2} \prod _{i=1}^{t-1} k_{\sigma(i)} ^{-1/2} \prod _{i=t}^d k_{\sigma(i)} ^{-1/q_{\sigma(i)}}\Bigr) ^{\frac{1/q_{\sigma(t)} - (1-\lambda)/p_{\alpha, \sigma(t)}-\lambda/p_{\beta,\sigma(t)}}{1/2-1/q_{\sigma(t)}}};
$$
therefore,
$$
\nu_\alpha \prod _{i=1}^{t-1} k_{\sigma(i)} ^{1/p_{\beta,\sigma(i)} - 1/p_{\alpha, \sigma(i)}} \Bigl( n^{1/2} \prod _{i=1}^{t-1} k_{\sigma(i)} ^{-1/2} \prod _{i=t}^d k_{\sigma(i)} ^{-1/q_{\sigma(i)}}\Bigr) ^{\frac{1/p_{\beta,\sigma(t)} - 1/p_{\alpha, \sigma(t)}}{1/2-1/q_{\sigma(t)}}} \le \nu_\beta.
$$
This together with \eqref{s_sigma_i2} implies \eqref{incl_ineq}.

\item Let $n>k_{\sigma(1)} \dots k_{\sigma(\nu)} k_{\sigma(\nu+1)} ^{2/q _{\sigma(\nu+1)}} \dots k_{\sigma(d)} ^{2/q _{\sigma(d)}}$. Then, by \eqref{phi_case30}, \eqref{1_theta_1lpa}, the inequality \eqref{nu_a_nu_b} can be written as follows:
$$
\left(\frac{\nu_\alpha}{\nu_\beta}\right) ^\lambda \prod _{i=1}^{\nu} k_{\sigma(i)} ^{1/q_{\sigma(i)} - 1/p_{\alpha, \sigma(i)}} n^{-1/2} \prod _{i=1}^{\nu} k_{\sigma(i)} ^{1/2} \prod _{i=\nu+1}^{d} k_{\sigma(i)} ^{1/q_{\sigma(i)}} \le 
$$
$$
\le \prod _{i=1}^{\nu} k_{\sigma(i)} ^{1/q_{\sigma(i)} - (1-\lambda)/p_{\alpha, \sigma(i)} -\lambda/p_{\beta, \sigma(i)}} n^{-1/2} \prod _{i=1}^{\nu} k_{\sigma(i)} ^{1/2} \prod _{i=\nu+1}^{d} k_{\sigma(i)} ^{1/q_{\sigma(i)}};
$$
hence,
$$
\nu_\alpha \prod _{i=1}^{\nu} k_{\sigma(i)} ^{1/p_{\beta,\sigma(i)} - 1/p_{\alpha, \sigma(i)}} \le \nu_\beta.
$$
This together with \eqref{s_sigma_i3} implies \eqref{incl_ineq}.
\end{enumerate}
This completes the proof of \eqref{dn_est_psi1}.

\subsection{The case $m\ge 2$}

For brevity, we write $\overline{\theta}(\overline{\alpha}):=\overline{\theta}(\overline{\alpha}, \, Z)$, $\lambda_j(\overline{\alpha}) = \lambda_j(\overline{\alpha}, \, Z)$, $j=1, \, \dots, \, m$.

We define the permutation $\sigma\in S_d$ according to \eqref{upor} for $\overline{p}:=\overline{\theta}(\overline{\alpha})$; it is chosen so that
\begin{align}
\label{sigma_choice}
\begin{array}{c}
\theta_{\sigma(i)}(\overline{\alpha})> q_{\sigma(i)}, \; 1\le i\le \mu_1; \quad \theta_{\sigma(i)}(\overline{\alpha}) = q_{\sigma(i)}, \; \mu_1+1\le i\le \mu_2;
\\
2< \theta_{\sigma(i)}(\overline{\alpha}) < q_{\sigma(i)}, \; \mu_2+1\le i\le \nu_1; \quad \theta_{\sigma(i)}(\overline{\alpha}) = 2, \; \nu_1+1\le i\le \nu_2;
\\
\theta_{\sigma(i)}(\overline{\alpha})< 2, \; \nu_2+1\le i \le d.
\end{array}
\end{align}

Let $I=I_2\sqcup I_q\sqcup I_{\omega_1} \sqcup \dots \sqcup I_{\omega_L}$ be the set from Definition \ref{zm_def1}. Then, by \eqref{sigma_choice} and generality of position (see assertion 2 of Definition \ref{gen_pos}), 
\begin{align}
\label{iq2_nu_nu}
I_q=\{\sigma(\mu_1+1), \, \dots, \, \sigma(\mu_2)\}, \quad I_2=\{\sigma(\nu_1+1), \, \dots, \, \sigma(\nu_2)\}.
\end{align}
Changing if necessary the numeration, we assume that $\omega_{\theta_i(\overline{\alpha}), q_i} \le \omega_{\theta_j(\overline{\alpha}), q_j}$ for $i\in I_{\omega_t}$, $j\in I_{\omega_l}$, $1\le t<l\le L$. Then from \eqref{upor} and generality of position (see assertion 2 of Definition \ref{gen_pos}) it follows that there are $t_1<t_1'<t_2<t_2'<\dots < t_L<t_L'$ such that
\begin{align}
\label{i_om_j_tj}
I_{\omega_j} = \sigma(\{t_j, \, t_j+1, \, \dots, \, t'_j\}), \quad 1\le j\le L;
\end{align}
in addition,
\begin{align}
\label{i_om_str} \omega_{\theta_{\sigma(i)}(\overline{\alpha}),q_{\sigma(i)}} < \omega_{\theta_{\sigma(i+1)}(\overline{\alpha}),q_{\sigma(i+1)}}, \quad \mu_2+1\le i < \nu_1, \; i\notin \{t_j, \, t_{j+1}, \, \dots, \, t'_{j-1}\}_{j=1}^L.
\end{align}

As in the case $m=1$, we define the set
\begin{align}
\label{w_mg2_def} W= \nu_{\alpha_1}^{\lambda_1(\overline{\alpha})} \dots \nu_{\alpha_m} ^{\lambda_m(\overline{\alpha})} u_1^{-1/\theta_1(\overline{\alpha})}\dots u_d^{-1/\theta_d(\overline{\alpha})}V^{\overline{k}} _{\overline{u}},
\end{align}
where $\overline{u} = (u_1, \, \dots, \, u_d)$, $u_i = \lceil s_i\rceil$ or $u_i=\lfloor s_i\rfloor$, and the numbers $s_i\in [1, \, k_i]$, such that $W \subset 2^d M$, will be defined below.

First we define $s_{\sigma(i)}$ for $\sigma(i)\notin I$.
\begin{enumerate}
\item If $n \le k_{\sigma(1)} \dots k_{\sigma(\mu_2)} k_{\sigma(\mu_2+1)} ^{2/q_{\sigma(\mu_2+1)}} \dots k_{\sigma(d)} ^{2/q_{\sigma(d)}}$, we set
\begin{align}
\label{s_def1} s_{\sigma(i)} = \begin{cases} k_{\sigma(i)}, & 1\le i \le \mu_1, \\ 1, & \mu_2+1\le i\le d, \; \sigma(i)\notin I.\end{cases}
\end{align}
\item Let $\mu_2+1\le t\le \nu_1$,
\begin{align}
\label{ks1tm1n}
k_{\sigma(1)} \dots k_{\sigma(t-1)} k_{\sigma(t)} ^{2/q_{\sigma(t)}} \dots k_{\sigma(d)} ^{2/q_{\sigma(d)}} < n \le k_{\sigma(1)} \dots k_{\sigma(t)} k_{\sigma(t+1)} ^{2/q_{\sigma(t)}} \dots k_{\sigma(d)} ^{2/q_{\sigma(d)}}.
\end{align}
Then we set
\begin{align}
\label{s_def2}
 s_{\sigma(i)} = 
 \begin{cases}
 k_{\sigma(i)}, & 1\le i \le t-1, \; \sigma(i)\notin I,\\
 (n^{1/2} k_{\sigma(1)}^{-1/2} \dots k_{\sigma(t-1)} ^{-1/2} k_{\sigma(t)}^{-1/q_{\sigma(t)}} \dots k_{\sigma(d)} ^{-1/q_{\sigma(d)}}) ^{\frac{1}{1/2-1/q_{\sigma(t)}}}, & i= t, \; \sigma(i)\notin I, \\ 1, & t+1\le i\le d, \; \sigma(i)\notin I.
\end{cases} 
\end{align}

\item Let $n> k_{\sigma(1)} \dots k_{\sigma(\nu_1)} k_{\sigma(\nu_1+1)} ^{2/q_{\sigma(\nu_1+1)}} \dots k_{\sigma(d)} ^{2/q_{\sigma(d)}}$. Then we set
\begin{align}
\label{s_def3} s_{\sigma(i)} = \begin{cases} k_{\sigma(i)}, & 1\le i \le \nu_1, \; \sigma(i)\notin I, \\ 1, & \nu_2+1\le i\le d.\end{cases}
\end{align}
\end{enumerate}
From \eqref{s_def1}, \eqref{ks1tm1n}, \eqref{s_def2}, \eqref{s_def3} we get that $1\le s_l\le k_l$ for $l\notin I$.

For $i\in I$, the numbers $s_i>0$ will be defined later; now we say that they will satisfy the equations
\begin{align}
\label{si_def_ini} \frac{\nu_{\alpha_j}}{\nu_{\alpha_1}} = \prod _{i=1}^d s_i ^{1/p_{\alpha_j,i} - 1/p_{\alpha_1,i}}, \quad 2\le j\le m,
\end{align}
under the conditions \eqref{s_def1}, \eqref{s_def2}, \eqref{s_def3}. If we take the logarithm of both sides of the equations \eqref{si_def_ini}, we obtain the linear system of equations on $\log s_i$, $i\in I$. Since the vectors $\{(1/\overline{p}_{\alpha_j})_I\}_{j=1}^m$ are affinely independent and $\# I\ge m-1$ (see assertion 1 of Definition \ref{gen_pos} and \eqref{card_i_zm}), this system has a solution. 

The following assertion can be checked directly.
\begin{Sta}
\label{sta_nu12}
Let the numbers $s_i>0$ satisfy \eqref{si_def_ini}, $\mu_{j,k}\ge 0$, $1\le j\le m$, $k=1, \, 2$, $\sum \limits _{j=1}^m \mu_{j,k}=1$. Let the vectors $\overline{\theta}_k$ and the numbers $\nu_{(k)}$ be defined by the equations
\begin{align}
\label{1theta_k}
\frac{1}{\overline{\theta}_k} =\sum \limits _{j=1}^m \frac{\mu_{j,k}}{\overline{p}_{\alpha_j}}, \quad \nu_{(k)} = \prod _{j=1}^m \nu_{\alpha_j}^{\mu_{j,k}}, \quad k=1, \, 2.
\end{align}
Then
\begin{align}
\label{nu1_nu2} \frac{\nu_{(1)}}{\nu_{(2)}} =  \prod _{l=1}^d s_l^{1/\theta_{1,l}-1/\theta_{2,l}}.
\end{align}
\end{Sta}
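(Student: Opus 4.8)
The plan is to prove \eqref{nu1_nu2} by a direct logarithmic computation that reduces everything to the defining relations \eqref{si_def_ini}. First I would renormalize by $\nu_{\alpha_1}$: since $\sum_{j=1}^m\mu_{j,k}=1$, we have $\nu_{(k)}=\prod_{j=1}^m\nu_{\alpha_j}^{\mu_{j,k}}=\nu_{\alpha_1}\prod_{j=2}^m(\nu_{\alpha_j}/\nu_{\alpha_1})^{\mu_{j,k}}$ for $k=1,2$, so that in the ratio the factors $\nu_{\alpha_1}$ cancel and $\nu_{(1)}/\nu_{(2)}=\prod_{j=2}^m(\nu_{\alpha_j}/\nu_{\alpha_1})^{\mu_{j,1}-\mu_{j,2}}$.

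Next I would substitute \eqref{si_def_ini} into this last expression, turning $\nu_{(1)}/\nu_{(2)}$ into $\prod_{l=1}^d s_l^{c_l}$, where $c_l=\sum_{j=2}^m(\mu_{j,1}-\mu_{j,2})(1/p_{\alpha_j,l}-1/p_{\alpha_1,l})$. It remains to identify $c_l$ with the exponent $1/\theta_{1,l}-1/\theta_{2,l}$ appearing on the right-hand side of \eqref{nu1_nu2}; by \eqref{1theta_k} the latter equals $\sum_{j=1}^m(\mu_{j,1}-\mu_{j,2})/p_{\alpha_j,l}$.

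Expanding the product $(\mu_{j,1}-\mu_{j,2})(1/p_{\alpha_j,l}-1/p_{\alpha_1,l})$ and using $\sum_{j=2}^m(\mu_{j,1}-\mu_{j,2})=-(\mu_{1,1}-\mu_{1,2})$, which follows from $\sum_{j=1}^m\mu_{j,1}=\sum_{j=1}^m\mu_{j,2}=1$, the terms involving $1/p_{\alpha_1,l}$ reassemble into the missing $j=1$ summand, so that $c_l=\sum_{j=1}^m(\mu_{j,1}-\mu_{j,2})/p_{\alpha_j,l}$, as required. No genuine obstacle arises here — as the paper notes, the assertion is checked directly; the only point needing care is the bookkeeping of the index $j=1$ when passing between the sum over $2\le j\le m$ and the full sum over $1\le j\le m$, which is exactly what the normalization $\sum_j\mu_{j,k}=1$ controls.
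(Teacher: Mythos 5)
Your computation is correct and is exactly the direct verification the paper has in mind (the paper simply states that the assertion "can be checked directly" and omits the details). The normalization by $\nu_{\alpha_1}$, the substitution of \eqref{si_def_ini}, and the reassembly of the $j=1$ term via $\sum_{j=1}^m\mu_{j,1}=\sum_{j=1}^m\mu_{j,2}=1$ all check out.
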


First we take for an arbitrary solution $\{s_i\}_{i\in I}$ of the equation \eqref{si_def_ini} and prove that, under the condition of generality of position, we have $I=I_2$, $I=I_q$ or $I=I_{\omega_1}$, and, in the cases $I=I_q$ and $I=I_2$, the inequalities $1\le s_l\le k_l$ hold for all $l\in I$. Then we consider the case $I=I_{\omega_1}$, construct the special solution of \eqref{si_def_ini} and prove that $1\le s_l\le k_l$, $l\in I$.

In what follows, we denote
\begin{gather}
\label{i_om_razl}
I_\omega = I_{\omega_1} \sqcup \dots \sqcup I_{\omega_L},
\\
\label{delta_simpl_def}
\Delta = {\rm conv}\, \{1/\overline{p}_{\alpha_j}\} _{j=1}^m.
\end{gather}

{\bf The case $l\in I_q$.} Then $\sigma^{-1}(l) \in \{\mu_1+1, \, \dots, \, \mu_2\}$ (see \eqref{iq2_nu_nu}). Consider the set of points $(1/p_1, \, \dots, \, 1/p_d)\in \Delta$ defined by the conditions
\begin{align}
\label{segm_def_delta}
\begin{array}{c}
p_{\sigma(i)} = q_{\sigma(i)}, \; \mu_1+1\le i\le \mu_2, \; \sigma(i)\ne l; \\
p_{\sigma(i)} = 2, \; \nu_1+1\le i\le \nu_2; \\
\omega' _{p_{\sigma(t_j)}, q_{\sigma(t_j)}} = \omega' _{p_{\sigma(t_j+1)}, q_{\sigma(t_j+1)}} = \dots =\omega' _{p_{\sigma(t'_j)}, q_{\sigma(t'_j)}}, \; 1\le j\le L,
\end{array}
\end{align}
where $\omega'_{p,q}$ is from formula \eqref{om_prime}, and the numbers $t_j$, $t'_j$ ($1\le j\le L$) are from formula \eqref{i_om_j_tj}.

This set is a segment $[1/\overline{\rho}_{(1)}, \, 1/\overline{\rho}_{(2)}]$; in addition, $1/\overline{\theta}(\overline{\alpha})= 1/\overline{\theta}(\overline{\alpha}, \, Z)$ is its interior point. Indeed, the affine hull of the set $Z \in \tilde{\cal Z}_m$ has codimension $m-1$, and the affine hull of $\Delta$ has dimension $m-1$. These planes are complementary, intersect at the point $1/\overline{\theta}(\overline{\alpha})$, and $1/\overline{\theta}(\overline{\alpha})$ lies in the relative interior of $\Delta$ (see Definition \ref{nm_def1}). The set $Z$ is defined by \eqref{pl_tr_simpl00}. In the conditions \eqref{segm_def_delta} the equations are almost the same as those in \eqref{pl_tr_simpl00}; we only exclude the equation $x_l = 1/q_l$ and the inequalities $0<\omega'_{p_{\sigma(t_j)}, q_{\sigma(t_j)}} < 1$ ($1\le j\le L$); hence the intersection of the corresponding plane with $\Delta$ is a segment with the interior point $1/\overline{\theta}(\overline{\alpha})$.

We write $\overline{\rho}_{(k)} = (\rho_{(k),1}, \, \dots, \, \rho_{(k),d})$, $k=1, \, 2$.

Without loss of generality,
\begin{align}
\label{rho_i} \rho_{(1),l} > q_l>\rho _{(2),l}.
\end{align}
The number $\lambda \in (0, \, 1)$ is defined by the equation
\begin{align}
\label{lam_def1} \frac{1}{\overline{\theta}(\overline{\alpha})} = \frac{1-\lambda}{\overline{\rho}_{(1)}} + \frac{\lambda}{\overline{\rho}_{(2)}}.
\end{align}

Let
$$
D = \{(1/p_1, \, \dots, \, 1/p_d):\; p_{\sigma(i)}> q_{\sigma(i)}, \; i\in \overline{1, \, \mu_1},
$$
$$
2<p_{\sigma(i)} < q_{\sigma(i)}, \; i\in \overline{\mu_2+1, \, \nu_1}, \;p_{\sigma(i)}<2, \; i\in \overline{\nu_2+1, \, d},
$$
$$
p_l>2, \; \omega _{p_l,q_l} < \omega_{p_{\sigma(\mu_2+1)}, q_{\sigma(\mu_2+1)}}
< \dots < \omega _{p_{\sigma(t_1-1)},q_{\sigma(t_1-1)}} < \omega _{p_{\sigma(t_1)},q_{\sigma(t_1)}}<$$$$<
\omega _{p_{\sigma(t'_1+1)},q_{\sigma(t'_1+1)}}<\dots< \omega _{p_{\sigma(t_2-1)},q_{\sigma(t_2-1)}}<\omega _{p_{\sigma(t_2)},q_{\sigma(t_2)}} < \omega _{p_{\sigma(t'_2+1)},q_{\sigma(t'_2+1)}} < $$$$<\dots < \omega _{p_{\sigma(t'_L+1)},q_{\sigma(t'_L+1)}}< \omega _{p_{\sigma(t'_L+2)},q_{\sigma(t'_L+2)}}<\dots < \omega _{p_{\sigma(\nu_1)}, q_{\sigma(\nu_1)}} \}.
$$
Then the set $D$ is open in $[0, \, 1]^d$ and $\overline{\theta}(\overline{\alpha})\in D$ (the last inclusion follows from \eqref{sigma_choice}, \eqref{i_om_j_tj} and \eqref{i_om_str}). We set
\begin{align}
\label{tau_st}
\tau_* = \inf \Bigl\{\tau \in [0, \, \lambda]:\; \frac{1-\tau}{\rho_{(1)}} + \frac{\tau}{\rho_{(2)}} \in D\Bigr\},
\end{align}
\begin{align}
\label{tau_stst}
\tau_{**} = \sup \Bigl\{\tau \in [\lambda, \, 1]:\; \frac{1-\tau}{\rho_{(1)}} + \frac{\tau}{\rho_{(2)}} \in D\Bigr\}.
\end{align}
Then $0\le \tau_*<\lambda<\tau_{**}\le 1$. The vectors $\overline{\theta}_{(1)}$ and $\overline{\theta}_{(2)}$ are defined by the equations
\begin{align}
\label{th1th2_def}
\frac{1}{\overline{\theta}_{(1)}} = \frac{1-\tau_*}{\overline{\rho}_{(1)}} + \frac{\tau_*}{\overline{\rho}_{(2)}}, \quad \frac{1}{\overline{\theta}_{(2)}} = \frac{1-\tau_{**}}{\overline{\rho}_{(1)}} + \frac{\tau_{**}}{\overline{\rho}_{(2)}}.
\end{align}
Then there is a number $\tilde \lambda \in (0, \, 1)$ such that
\begin{align}
\label{1th11l2l}
\frac{1}{\overline{\theta}(\overline{\alpha})} = \frac{1-\tilde \lambda}{\overline{\theta}_{(1)}} + \frac{\tilde \lambda}{\overline{\theta}_{(2)}}.
\end{align}

The points $1/\overline{\rho}_{(1)}$, $1/\overline{\rho}_{(2)}$ by definition lie in $\Delta$; hence there exist the numbers $\tilde \mu_{1,j}\ge 0$, $\tilde \mu_{2,j}\ge 0$ ($j=1, \, \dots, \, m$) such that $\sum \limits _{j=1}^m \tilde \mu_{1,j} = \sum \limits _{j=1}^m \tilde \mu_{2,j} = 1$ and 
\begin{align}
\label{theta_mukj} \frac{1}{\overline{\theta}_{(1)}} = \sum \limits _{j=1}^m \frac{\tilde \mu_{1,j}}{\overline{p} _{\alpha_j}}, \quad \frac{1}{\overline{\theta}_{(2)}} = \sum \limits _{j=1}^m \frac{\tilde \mu_{2,j}}{\overline{p} _{\alpha_j}}.
\end{align}
We set
\begin{align}
\label{nu12_def}
\nu_{(1)} = \prod _{j=1}^m\nu_{\alpha_j}^{\tilde\mu_{1,j}}, \quad \nu_{(2)} = \prod _{j=1}^m\nu_{\alpha_j}^{\tilde\mu_{2,j}}.
\end{align}
By Proposition \ref{sta_nu12}, \eqref{nu1_nu2} holds and 
\begin{align}
\label{nu_a_12}
\nu_{\alpha_1}^{\lambda_1(\overline{\alpha})} \dots \nu_{\alpha_m}^{\lambda_m(\overline{\alpha})} = \nu_{(1)}^{1-\tilde \lambda} \nu_{(2)}^{\tilde \lambda};
\end{align}
the last equality follows from \eqref{1th11l2l} and affine independence of the vectors $\{(1/\overline{p}_{\alpha_j})_{I'}\}_{j=1}^m$ for any set $I'\subset \{1, \, \dots, \, d\}$ of cardinality $m-1$ (see assertion 1 of Definition \ref{gen_pos}).

We show that $\overline{\theta}_{(k)}\in \Theta$, $k=1, \, 2$. Consider the case $k=1$ (for $k=2$, the proof is similar).

Let $\tau_*=0$. We prove that 
\begin{enumerate}
\item the point $1/\overline{\theta}_{(1)}$ lies in the relative interior of $m-2$-dimensional face $\Delta'$ of the simplex $\Delta$;
\item $\theta_{(1),\sigma(i)}= q_{\sigma(i)}$ for $i \in \{\mu_1+1, \, \dots, \, \mu_2\} \backslash \{\sigma^{-1}(l)\}$, $\theta_{(1),\sigma(i)}=2$ for $i \in \{\nu_1+1, \, \dots, \, \nu_2\}$, 
\begin{align}
\label{om_ttj_in_01}
\omega'_{\theta_{(1),\sigma(t_j)},q_{\sigma(t_j)}} = \omega'_{\theta_{(1),\sigma(t_j+1)},q_{\sigma(t_j+1)}} = \dots = \omega'_{\theta_{(1),\sigma(t_j')},q_{\sigma(t_j')}}\in (0, \, 1)
\end{align}
for $1\le j\le L$.
\end{enumerate}
Indeed, $1/\overline{\theta}_{(1)}=1/\overline{\rho}_{(1)}$; by definition, this point belongs to the boundary of $\Delta$; hence, it belongs to its $m-2$-dimensional face. The plane defined by equations \eqref{segm_def_delta}, by generality of position (see assertion 2 of Definition \ref{gen_pos}), cannot intersect with $k$-dimensional face of $\Delta$ for $k<m-2$; hence, $1/\overline{\rho}_{(1)}$ can be only the relative interior point of $m-2$-dimensional face. The second assertion follows from the definition of $D$, \eqref{segm_def_delta} and generality of position (if $\omega'_{\theta_{(1),\sigma(t_j)},q_{\sigma(t_j)}}=0$ or $\omega'_{\theta_{(1),\sigma(t_j)},q_{\sigma(t_j)}}=1$, then another equality of the form $\theta_{(1),\sigma(t_j)} = q_{\sigma(t_j)}$ or $\theta_{(1), \sigma(t_j)}=2$ will appear, which contradicts to assertion 2 of Definition \ref{gen_pos}).

Finally, ${\rm aff}\, \Delta'$ and the plane defined by \eqref{segm_def_delta} are complementary (again by assertion 2 of Definition \ref{gen_pos}).

Hence, for $\tau_*=0$ we have $\overline{\theta}_{(1)}\in \Theta$.

Let $\tau_*>0$. Then the point $1/\overline{\theta}_{(1)}$ belongs to the relative interior of the simplex $\Delta$; here the following cases are possible:
\begin{itemize}
\item Besides \eqref{segm_def_delta}, the following equation holds:
\begin{align}
\label{t1iqi2om}
\theta_{(1),i}=q_i, \quad \theta_{(1),i} = 2 \quad \text{or} \quad \omega'_{\theta _{(1),i},q_i} = \omega'_{\theta_{(1),j},q_j} \in [0, \, 1].
\end{align}
Taking into account assertion 2 of Definition \ref{gen_pos}, we get that in the last case there is the inclusion into the interval $(0, \, 1)$; in addition, as for $\tau_*=0$, \eqref{om_ttj_in_01} holds. The set defined by \eqref{segm_def_delta}, \eqref{om_ttj_in_01}, \eqref{t1iqi2om} belongs to $\tilde{\cal Z}_m$; by generality of position, its affine hull is complementary to ${\rm aff}\, \Delta$. Hence $\overline{\theta}_{(1)}\in \Theta$.

\item There is the unique $j\in \{1, \, \dots, \, L\}$ for which the condition \eqref{om_ttj_in_01} is changed by the equations $\theta_{(1), \sigma(t_j)} = q _{\sigma(t_j)}, \, \dots, \, \theta_{(1), \sigma(t'_j)} = q _{\sigma(t'_j)}$ or $\theta_{(1), \sigma(t_j)} =\dots =\theta_{(1), \sigma(t'_j)} = 2$. Again by generality of position we get a set from $\tilde{\cal Z}_m$, whose affine hull is complementary to ${\rm aff}\, \Delta$; as in the previous cases, $\overline{\theta}_{(1)}\in \Theta$.
\end{itemize}

From \eqref{psi_def}, \eqref{psi_eq_nu_phi}, \eqref{nu_a_12} and inclusions $\overline{\theta}_{(1)}$, $\overline{\theta}_{(2)} \in \Theta$, we get
\begin{gather}
\label{nu1lnu_pth} \nu_{(1)}^{1-\tilde \lambda} \nu _{(2)} ^{\tilde \lambda} \Phi(\overline{\theta}(\overline{\alpha}), \, \overline{q}, \, \overline{k}, \, n) \le \nu_{(1)} \Phi(\overline{\theta}_{(1)}, \, \overline{q}, \, \overline{k}, \, n),
\\
\label{nu2lnu_pth} \nu_{(1)}^{1-\tilde \lambda} \nu _{(2)} ^{\tilde \lambda} \Phi(\overline{\theta}(\overline{\alpha}), \, \overline{q}, \, \overline{k}, \, n) \le \nu_{(2)} \Phi(\overline{\theta}_{(2)}, \, \overline{q}, \, \overline{k}, \, n).
\end{gather}

By \eqref{rho_i}, \eqref{lam_def1}, \eqref{tau_st}, \eqref{tau_stst}, \eqref{th1th2_def}, we have
\begin{align}
\label{theta_1gql} \theta_{(1),l}> q_l> \theta_{(2),l}.
\end{align}

Let
\begin{align}
\label{nllll}
n\le \prod _{1\le i\le \mu_2, \, \sigma(i)\ne l} k_{\sigma(i)} \cdot k_l ^{2/q_l} \prod _{i=\mu_2+1}^d k_{\sigma(i)} ^{2/q_{\sigma(i)}}. 
\end{align}
Since $1/\overline{\theta}_{(1)}\in \overline{D}$, by the definition of $D$, \eqref{phi_case1}, \eqref{sigma_choice}, \eqref{segm_def_delta} and \eqref{theta_1gql} we get that the inequality \eqref{nu1lnu_pth} can be written as follows:
$$
\nu_{(1)}^{1-\tilde \lambda} \nu _{(2)} ^{\tilde \lambda} \prod _{i=1}^{\mu_1} k_{\sigma(i)}^{1/q_{\sigma(i)}-1/\theta_{\sigma(i)}(\overline{\alpha})} \le \nu_{(1)} \prod _{i=1}^{\mu_1} k_{\sigma(i)}^{1/q_{\sigma(i)}-1/\theta_{(1),\sigma(i)}} \cdot k_l^{1/q_l-1/\theta_{(1),l}}.
$$
Taking into account \eqref{1th11l2l} and the equality $\theta_l(\overline{\alpha})=q_l$, we get that
\begin{align}
\label{n12ge_prod_kl}
\frac{\nu_{(1)}}{\nu_{(2)}} \ge \prod _{i=1}^{\mu_1} k_{\sigma(i)}^{1/\theta_{(1),\sigma(i)} - 1/\theta_{(2),\sigma(i)}} \cdot k_l^{1/\theta_{(1),l}-1/\theta_{(2),l}}.
\end{align}
Now we apply \eqref{nu1_nu2}, take into account that the points $1/\overline{\theta}(\overline{\alpha})$ and $1/\overline{\theta} _{(1)}$ satisfy \eqref{segm_def_delta}, and obtain that
$$
\frac{\nu_{(1)}}{\nu_{(2)}} = \prod _{i=1}^d s_{\sigma(i)} ^{1/\theta_{(1),\sigma(i)} - 1/\theta_{(2),\sigma(i)}} \stackrel{\eqref{s_def1}}{=} \prod _{i=1}^{\mu_1} k_{\sigma(i)}^{1/\theta_{(1),\sigma(i)} - 1/\theta_{(2),\sigma(i)}} \cdot s_l^{1/\theta_{(1),l}-1/\theta_{(2),l}} \times$$$$\times\prod _{\mu_1+1\le i\le \mu_2, \, \sigma(i)\ne l} s_{\sigma(i)} ^{1/q_{\sigma(i)} -1/q_{\sigma(i)}} \prod _{\mu_2+1\le i\le \nu_1, \, \sigma(i) \notin I_\omega} 1^{1/\theta_{(1),\sigma(i)} - 1/\theta_{(2),\sigma(i)}} \times$$$$\times\prod _{\sigma(i)\in I_\omega} s_{\sigma(i)} ^{1/\theta_{(1),\sigma(i)} - 1/\theta_{(2),\sigma(i)}} \prod _{\nu_1+1\le i\le \nu_2} s_{\sigma(i)} ^{1/2-1/2} \prod _{\nu_2+1\le i\le d} 1^{1/\theta_{(1),\sigma(i)} - 1/\theta_{(2),\sigma(i)}};
$$
this together with \eqref{n12ge_prod_kl} yields $s_l^{1/\theta_{(1),l}-1/\theta_{(2),l}} \prod _{\sigma(i)\in I_\omega} s_{\sigma(i)} ^{1/\theta_{(1),\sigma(i)} - 1/\theta_{(2),\sigma(i)}} \ge k_l^{1/\theta_{(1),l}-1/\theta_{(2),l}}$;
hence, by \eqref{theta_1gql},
\begin{align}
\label{est_predv1} s_l \le k_l \prod _{\sigma(i)\in I_\omega} s_{\sigma(i)} ^{-\frac{1/\theta_{(2),\sigma(i)} - 1/\theta_{(1),\sigma(i)}}{1/\theta_{(2),l}-1/\theta_{(1),l}}}.
\end{align}

Since $\theta_{(2),\sigma(i)}\ge q_{\sigma(i)}$ for $1\le i\le \mu_2$, $\sigma(i)\ne l$, $\theta_{(2),\sigma(i)}\le q_{\sigma(i)}$ for $\mu_2+1\le i\le d$ (by \eqref{segm_def_delta} and the definition of $D$), $\theta_{(2),l} \stackrel{\eqref{theta_1gql}}{<} q_l$, then, under condition \eqref{nllll}, by \eqref{phi_case1}, the inequality \eqref{nu2lnu_pth} similarly can be written as follows:
$$
\nu_{(1)}^{1-\tilde \lambda} \nu _{(2)} ^{\tilde \lambda} \prod _{i=1}^{\mu_1} k_{\sigma(i)}^{1/q_{\sigma(i)}-1/\theta_{\sigma(i)}(\overline{\alpha})} \le \nu_{(2)} \prod _{i=1}^{\mu_1} k_{\sigma(i)}^{1/q_{\sigma(i)}-1/\theta_{(2),\sigma(i)}};
$$
therefore,
\begin{align}
\label{est_predv2} s_l \ge \prod _{\sigma(i)\in I_\omega} s_{\sigma(i)} ^{-\frac{1/\theta_{(2),\sigma(i)} - 1/\theta_{(1),\sigma(i)}}{1/\theta_{(2),l}-1/\theta_{(1),l}}}.
\end{align}

If $\prod _{1\le i\le \mu_2, \, \sigma(i)\ne l} k_{\sigma(i)} \cdot k_l ^{2/q_l} \prod _{i=\mu_2+1}^d k_{\sigma(i)} ^{2/q_{\sigma(i)}}< n \le \prod _{1\le i\le \mu_2} k_{\sigma(i)} \prod _{i=\mu_2+1}^d k_{\sigma(i)} ^{2/q_{\sigma(i)}}$, the inequality \eqref{nu1lnu_pth} can be written as in the previous case, and we again obtain \eqref{est_predv1}. From the definition of $D$ and \eqref{theta_1gql} it follows that if $\mu_2+1\le \nu_1$, then $0<\omega_{\theta_{(2),l},q_l}\le \omega _{\theta _{(2), \sigma(\mu_2+1)},q_{\sigma(\mu_2+1)}}$, and if $\mu_2+1>\nu_1$, then $\theta _{(2),l}\in [2, \, q_l)$. Hence, by \eqref{phi_case1}, \eqref{phi_case2}, \eqref{phi_case3} and the conditions $\theta_{(1),\sigma(i)} = q_{\sigma(i)}$ ($\mu_1+1\le i\le \mu_2$, $\sigma(i)\ne l$), inequality \eqref{nu2lnu_pth} can be written as follows:
$$
\nu_{(1)}^{1-\tilde \lambda} \nu _{(2)} ^{\tilde \lambda} \prod _{i=1}^{\mu_1} k_{\sigma(i)}^{1/q_{\sigma(i)}-1/\theta_{\sigma(i)}(\overline{\alpha})} \le \nu_{(2)} \prod _{i=1}^{\mu_1} k_{\sigma(i)}^{1/q_{\sigma(i)}-1/\theta_{(2),\sigma(i)}}\times$$$$ \times \Bigl(n^{1/2} \prod _{1\le i\le \mu_2, \, \sigma(i)\ne l} k_{\sigma(i)}^{-1/2} \cdot k_l ^{-1/q_l} \prod _{i=\mu_2+1}^d k_{\sigma(i)} ^{-1/q_{\sigma(i)}}\Bigr)^{\frac{1/q_l-1/\theta_{(2),l}}{1/2-1/q_l}}.
$$
This together with \eqref{s_def1}, \eqref{nu1_nu2}, \eqref{segm_def_delta} implies that
\begin{align}
\label{est_predv3} s_l \ge \Bigl(n^{1/2} \prod _{1\le i\le \mu_2, \, \sigma(i)\ne l} k_{\sigma(i)}^{-1/2} \cdot k_l ^{-1/q_l} \prod _{i=\mu_2+1}^d k_{\sigma(i)} ^{-1/q_{\sigma(i)}}\Bigr)^{\frac{1}{1/2-1/q_l}}\prod _{\sigma(i)\in I_\omega} s_{\sigma(i)} ^{-\frac{1/\theta_{(2),\sigma(i)} - 1/\theta_{(1),\sigma(i)}}{1/\theta_{(2),l}-1/\theta_{(1),l}}}.
\end{align}

Let $\mu_2+1\le t\le \nu_1$, $\prod _{i=1}^{t-1} k_{\sigma(i)} \prod _{i=t}^d k_{\sigma(i)}^{2/q_{\sigma(i)}}< n\le \prod _{i=1}^t k_{\sigma(i)} \prod _{i=t+1}^d k_{\sigma(i)}^{2/q_{\sigma(i)}}$. By \eqref{phi_case20}, \eqref{phi_case1}, \eqref{phi_case3}, \eqref{segm_def_delta} and the definition of the set $D$, inequality \eqref{nu1lnu_pth} can be written as follows:
\begin{align}
\label{nu_t_eq111}
\begin{array}{c}
\nu_{(1)}^{1-\tilde \lambda} \nu _{(2)} ^{\tilde \lambda} \prod _{i=1}^{t-1} k_{\sigma(i)}^{1/q_{\sigma(i)}-1/\theta_{\sigma(i)}(\overline{\alpha})} \Bigl(n^{-1/2} \prod _{i=1}^{t-1} k_{\sigma(i)}^{1/2} \prod _{i=t}^d k_{\sigma(i)}^{1/q_{\sigma(i)}}\Bigr)^{\frac{1/\theta_{\sigma(t)}(\overline{\alpha})-1/q_{\sigma(t)}}{1/2-1/q_{\sigma(t)}}}\le \\ \le\nu_{(1)} \prod _{i=1}^{t-1} k_{\sigma(i)}^{1/q_{\sigma(i)}-1/\theta_{(1),\sigma(i)}} \Bigl(n^{-1/2} \prod _{i=1}^{t-1} k_{\sigma(i)}^{1/2} \prod _{i=t}^d k_{\sigma(i)}^{1/q_{\sigma(i)}}\Bigr)^{\frac{1/\theta_{(1),\sigma(t)}-1/q_{\sigma(t)}}{1/2-1/q_{\sigma(t)}}};
\end{array}
\end{align}
this is equivalent to the condition
$$
\frac{\nu_{(1)}}{\nu_{(2)}} \ge \prod _{i=1}^{t-1} k_{\sigma(i)}^{1/\theta_{(1),\sigma(i)}-1/\theta_{(2),\sigma(i)}} \Bigl(n^{1/2} \prod _{i=1}^{t-1} k_{\sigma(i)}^{-1/2} \prod _{i=t}^d k_{\sigma(i)}^{-1/q_{\sigma(i)}}\Bigr)^{\frac{1/\theta_{(1),\sigma(t)}-1/\theta_{(2), \sigma(t)}}{1/2-1/q_{\sigma(t)}}}.
$$
Inequality \eqref{nu2lnu_pth} has the form
\begin{align}
\label{nu_t_eq222}
\begin{array}{c}
\nu_{(1)}^{1-\tilde \lambda} \nu _{(2)} ^{\tilde \lambda} \prod _{i=1}^{t-1} k_{\sigma(i)}^{1/q_{\sigma(i)}-1/\theta_{\sigma(i)}(\overline{\alpha})} \Bigl(n^{-1/2} \prod _{i=1}^{t-1} k_{\sigma(i)}^{1/2} \prod _{i=t}^d k_{\sigma(i)}^{1/q_{\sigma(i)}}\Bigr)^{\frac{1/\theta_{\sigma(t)}(\overline{\alpha})-1/q_{\sigma(t)}}{1/2-1/q_{\sigma(t)}}}\le \\ \le\nu_{(2)} \prod _{i=1}^{t-1} k_{\sigma(i)}^{1/q_{\sigma(i)}-1/\theta_{(2),\sigma(i)}} \Bigl(n^{-1/2} \prod _{i=1}^{t-1} k_{\sigma(i)}^{1/2} \prod _{i=t}^d k_{\sigma(i)}^{1/q_{\sigma(i)}}\Bigr)^{\frac{1/\theta_{(2),\sigma(t)}-1/q_{\sigma(t)}}{1/2-1/q_{\sigma(t)}}};
\end{array}
\end{align}
it is equivalent to
$$
\frac{\nu_{(1)}}{\nu_{(2)}} \le \prod _{i=1}^{t-1} k_{\sigma(i)}^{1/\theta_{(1),\sigma(i)}-1/\theta_{(2),\sigma(i)}} \Bigl(n^{1/2} \prod _{i=1}^{t-1} k_{\sigma(i)}^{-1/2} \prod _{i=t}^d k_{\sigma(i)}^{-1/q_{\sigma(i)}}\Bigr)^{\frac{1/\theta_{(1),\sigma(t)}-1/\theta_{(2), \sigma(t)}}{1/2-1/q_{\sigma(t)}}}.
$$
Therefore,
$$
\frac{\nu_{(1)}}{\nu_{(2)}} = \prod _{i=1}^{t-1} k_{\sigma(i)}^{1/\theta_{(1),\sigma(i)}-1/\theta_{(2),\sigma(i)}} \Bigl(n^{1/2} \prod _{i=1}^{t-1} k_{\sigma(i)}^{-1/2} \prod _{i=t}^d k_{\sigma(i)}^{-1/q_{\sigma(i)}}\Bigr)^{\frac{1/\theta_{(1),\sigma(t)}-1/\theta_{(2), \sigma(t)}}{1/2-1/q_{\sigma(t)}}},
$$
and in \eqref{nu_t_eq111}, \eqref{nu_t_eq222} the inequalities turn to equalities; hence
\begin{align}
\label{phi12_eq}
\nu_{(1)}^{1-\tilde \lambda} \nu _{(2)} ^{\tilde \lambda} \Phi(\overline{\theta}(\overline{\alpha}), \, \overline{q}, \, \overline{k}, \, n) = \nu_{(1)} \Phi(\overline{\theta}_{(1)}, \, \overline{q}, \, \overline{k}, \, n)=\nu_{(2)} \Phi(\overline{\theta}_{(2)}, \, \overline{q}, \, \overline{k}, \, n),
\end{align}
which contradicts to generality of position (see assertion 4 of Definition \ref{gen_pos}).

For $n> \prod _{i=1}^{\nu_1} k_{\sigma(i)} \prod _{i=\nu_1+1}^d k_{\sigma(i)} ^{2/q_{\sigma(i)}}$, we apply \eqref{phi_case3} and get
\begin{align}
\label{nu_t_eq1118}
\begin{array}{c}
\nu_{(1)}^{1-\tilde \lambda} \nu _{(2)} ^{\tilde \lambda} \prod _{i=1}^{\nu_1} k_{\sigma(i)}^{1/q_{\sigma(i)}-1/\theta_{\sigma(i)}(\overline{\alpha})} \cdot n^{-1/2} \prod _{i=1}^{\nu_1} k_{\sigma(i)}^{1/2} \prod _{i=\nu_1+1}^d k_{\sigma(i)}^{1/q_{\sigma(i)}}\le \\ \le\nu_{(1)} \prod _{i=1}^{\nu_1} k_{\sigma(i)}^{1/q_{\sigma(i)}-1/\theta_{(1),\sigma(i)}} \cdot n^{-1/2} \prod _{i=1}^{\nu_1} k_{\sigma(i)}^{1/2} \prod _{i=\nu_1+1}^d k_{\sigma(i)}^{1/q_{\sigma(i)}},
\end{array}
\end{align}
\begin{align}
\label{nu_t_eq2228}
\begin{array}{c}
\nu_{(1)}^{1-\tilde \lambda} \nu _{(2)} ^{\tilde \lambda} \prod _{i=1}^{\nu_1} k_{\sigma(i)}^{1/q_{\sigma(i)}-1/\theta_{\sigma(i)}(\overline{\alpha})} \cdot n^{-1/2} \prod _{i=1}^{\nu_1} k_{\sigma(i)}^{1/2} \prod _{i=\nu_1+1}^d k_{\sigma(i)}^{1/q_{\sigma(i)}} \le \\ \le\nu_{(2)} \prod _{i=1}^{\nu_1} k_{\sigma(i)}^{1/q_{\sigma(i)}-1/\theta_{(2),\sigma(i)}} \cdot n^{-1/2} \prod _{i=1}^{\nu_1} k_{\sigma(i)}^{1/2} \prod _{i=\nu_1+1}^d k_{\sigma(i)}^{1/q_{\sigma(i)}};
\end{array}
\end{align}
this implies that
\begin{align}
\label{nu_prod_eq}
\frac{\nu_{(1)}}{\nu_{(2)}} = \prod _{i=1}^{\nu_1} k_{\sigma(i)}^{1/\theta_{(1),\sigma(i)}-1/\theta_{(2),\sigma(i)}}.
\end{align}
We again get \eqref{phi12_eq}; this contradicts to generality of position.

Thus, we proved
\begin{Sta}
\label{iq_sta} Let \eqref{psi_eq_nu_phi} hold. Then
\begin{enumerate}
\item For $n\le \prod _{1\le i\le \mu_2, \, \sigma(i)\ne l} k_{\sigma(i)} \cdot k_l ^{2/q_l} \prod _{i=\mu_2+1}^d k_{\sigma(i)} ^{2/q_{\sigma(i)}}$ $(l\in I_q)$, inequalities \eqref{est_predv1}, \eqref{est_predv2} hold.

\item For $\prod _{1\le i\le \mu_2, \, \sigma(i)\ne l} k_{\sigma(i)} \cdot k_l ^{2/q_l} \prod _{i=\mu_2+1}^d k_{\sigma(i)} ^{2/q_{\sigma(i)}}< n\le \prod _{i=1}^{\mu_2} k_{\sigma(i)}\prod _{i=\mu_2+1}^d k_{\sigma(i)} ^{2/q_{\sigma(i)}}$ $(l\in I_q)$, inequalities \eqref{est_predv1}, \eqref{est_predv3}.

\item If $n>\prod _{i=1}^{\mu_2} k_{\sigma(i)}\prod _{i=\mu_2+1}^d k_{\sigma(i)} ^{2/q_{\sigma(i)}}$, then $I_q = \varnothing$.
\end{enumerate}
\end{Sta}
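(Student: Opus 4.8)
The plan is to obtain all three assertions by specializing the comparison argument developed just above for a single index $l\in I_q$. The common construction is this: given a pair $(\overline{\alpha},Z)$ realizing $\Psi$ with $l\in I_q$, delete the one equation $x_l=1/q_l$ from the relations defining $Z$ and intersect the resulting plane with the simplex $\Delta={\rm conv}\,\{1/\overline{p}_{\alpha_j}\}_{j=1}^m$; this gives a segment $[1/\overline{\rho}_{(1)},1/\overline{\rho}_{(2)}]$ through the interior point $1/\overline{\theta}(\overline{\alpha})$, normalized so that $\rho_{(1),l}>q_l>\rho_{(2),l}$. Pushing to the boundary of the open region $D$ on each side yields $\overline{\theta}_{(1)}$ and $\overline{\theta}_{(2)}$, and one first checks---using assertion 2 of Definition \ref{gen_pos} to rule out intersections with low-dimensional faces and with spurious $q_i$-, $2$-, or $\omega'$-equalities---that $\overline{\theta}_{(1)},\overline{\theta}_{(2)}\in\Theta$. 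Then \eqref{psi_def} applied to these two points gives the basic inequalities \eqref{nu1lnu_pth}, \eqref{nu2lnu_pth}.

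For the first assertion, with $n$ in the range \eqref{nllll}, I expand \eqref{nu1lnu_pth}, \eqref{nu2lnu_pth} using the branch \eqref{phi_case1} of $\Phi$ (here $l$ sits in the $k^{2/q}$-block on both $\Phi(\overline{\theta}(\overline{\alpha}))$ and $\Phi(\overline{\theta}_{(2)})$, so the only surviving $k_l$-power is on the right of \eqref{nu1lnu_pth}), rewrite $\nu_{(1)}/\nu_{(2)}$ via Proposition \ref{sta_nu12} as a product of powers of the $s_i$, and cancel the common factors; the upper estimate reduces exactly to \eqref{est_predv1} and the lower one to \eqref{est_predv2}. For the second assertion the threshold has moved so that $k_l^{2/q_l}$ is replaced by $k_l$; consequently on the $\overline{\theta}_{(2)}$ side the relevant branch of $\Phi$ is \eqref{phi_case2} or \eqref{phi_case3} rather than \eqref{phi_case1}, and repeating the cancellation yields \eqref{est_predv1} once more together with the $n$-dependent lower bound \eqref{est_predv3}.

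For the third assertion the point is that once $n>\prod_{i=1}^{\mu_2}k_{\sigma(i)}\prod_{i=\mu_2+1}^d k_{\sigma(i)}^{2/q_{\sigma(i)}}$ the index $l$ has dropped out of the ``large'' block of $\Phi$ on both sides. Checking branch by branch---using \eqref{phi_case20} when $\prod_{i=1}^{t-1}k_{\sigma(i)}\prod_{i=t}^d k_{\sigma(i)}^{2/q_{\sigma(i)}}<n\le\prod_{i=1}^t k_{\sigma(i)}\prod_{i=t+1}^d k_{\sigma(i)}^{2/q_{\sigma(i)}}$ for $\mu_2+1\le t\le\nu_1$, and \eqref{phi_case3} beyond $\prod_{i=1}^{\nu_1}k_{\sigma(i)}\prod_{i=\nu_1+1}^d k_{\sigma(i)}^{2/q_{\sigma(i)}}$---one finds that \eqref{nu1lnu_pth} and \eqref{nu2lnu_pth} become opposite inequalities for $\nu_{(1)}/\nu_{(2)}$, hence equalities; this forces \eqref{phi12_eq}, i.e. $\Psi$ is realized simultaneously at two distinct admissible pairs, contradicting assertion 4 of Definition \ref{gen_pos}. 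Therefore no $l\in I_q$ can exist in this range, that is, $I_q=\varnothing$.

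The main difficulty I anticipate is not any one cancellation but the uniform verification that $\overline{\theta}_{(1)},\overline{\theta}_{(2)}\in\Theta$ in every sub-case: one must argue that after pushing to the boundary of $D$ the point lands either in the relative interior of an exactly $(m-2)$-dimensional face of $\Delta$, or in ${\rm int}\,\Delta$ with precisely one extra defining relation (a new $q_i$-equality, a new $2$-equality, one whole block $I_{\omega_j}$ degenerating to a $q$- or $2$-block, or a new $\omega'$-equality that, again by general position, cannot hit $0$ or $1$), and that its affine hull is complementary to the correspondingly enlarged plane. Keeping this face-and-exponent bookkeeping consistent with the sign pattern $\theta_{(1),l}>q_l>\theta_{(2),l}$---so that the correct branch of $\Phi$ is used on each side---is the delicate part; once the right branch is identified, the algebra is routine.
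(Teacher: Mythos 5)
Your proposal follows the paper's own argument essentially verbatim: the same segment construction obtained by deleting the equation $x_l=1/q_l$, the same push to the boundary of $D$ producing $\overline{\theta}_{(1)},\overline{\theta}_{(2)}\in\Theta$ with $\theta_{(1),l}>q_l>\theta_{(2),l}$, the same use of Proposition \ref{sta_nu12} to convert \eqref{nu1lnu_pth}--\eqref{nu2lnu_pth} into the bounds \eqref{est_predv1}--\eqref{est_predv3} in the first two $n$-ranges, and the same forced equality \eqref{phi12_eq} contradicting assertion 4 of Definition \ref{gen_pos} in the third range. The approach and the case bookkeeping are correct and match the paper.
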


{\bf The case $l\in I_2$.} Then $\sigma^{-1}(l) \in \{\nu_1+1, \, \dots, \, \nu_2\}$ (see \eqref{iq2_nu_nu}). Consider the set of points $(1/p_1, \, \dots, \, 1/p_d)\in \Delta$ defined by the conditions
\begin{align}
\label{segm_def_delta1}
\begin{array}{c}
p_{\sigma(i)} = q_{\sigma(i)}, \; \mu_1+1\le i\le \mu_2; \\
p_{\sigma(i)} = 2, \; \nu_1+1\le i\le \nu_2, \; \sigma(i)\ne l; \\
\omega' _{p_{\sigma(t_j)}, q_{\sigma(t_j)}} = \omega' _{p_{\sigma(t_j+1)}, q_{\sigma(t_j+1)}} = \dots =\omega' _{p_{\sigma(t'_j)}, q_{\sigma(t'_j)}}, \; 1\le j\le L,
\end{array}
\end{align}
where $\omega'_{p,q}$ is from formula \eqref{om_prime}.

As in the case $l\in I_q$, we get that, by generality of position, this set is a segment $[1/\overline{\rho}_{(1)}, \, 1/\overline{\rho}_{(2)}]$ with the interior point $1/\overline{\theta}(\overline{\alpha})$. We may assume that
\begin{align}
\label{rho_i1} \rho_{(1),l} > 2>\rho _{(2),l}.
\end{align}
The number $\lambda \in (0, \, 1)$ is defined by the equation \eqref{lam_def1}.

Let
$$
D = \{(1/p_1, \, \dots, \, 1/p_d):\; p_{\sigma(i)}> q_{\sigma(i)}, \; i\in \overline{1, \, \mu_1},
$$
$$
2<p_{\sigma(i)} < q_{\sigma(i)}, \; i\in \overline{\mu_2+1, \, \nu_1}, \;p_{\sigma(i)}<2, \; i\in \overline{\nu_2+1, \, d},
$$
$$
p_l<q_l, \; \omega_{p_{\sigma(\mu_2+1)}, q_{\sigma(\mu_2+1)}}
< \dots < \omega _{p_{\sigma(t_1-1)},q_{\sigma(t_1-1)}} < \omega _{p_{\sigma(t_1)},q_{\sigma(t_1)}}<$$$$<
\omega _{p_{\sigma(t'_1+1)},q_{\sigma(t'_1+1)}}<\dots < \omega _{p_{\sigma(t_2-1)},q_{\sigma(t_2-1)}}<\omega _{p_{\sigma(t_2)},q_{\sigma(t_2)}} < \omega _{p_{\sigma(t'_2+1)},q_{\sigma(t'_2+1)}} < $$$$<\dots < \omega _{p_{\sigma(t'_L+1)},q_{\sigma(t'_L+1)}}< \omega _{p_{\sigma(t'_L+2)},q_{\sigma(t'_L+2)}} < \dots<\omega _{p_{\sigma(\nu_1)}, q_{\sigma(\nu_1)}}<\omega _{p_l,q_l} \}.
$$
Then the set $D$ is open in $[0, \, 1]^d$ and $\overline{\theta}(\overline{\alpha})\in D$.

The numbers $\tau_*$, $\tau_{**}$ are defined by formulas \eqref{tau_st}, \eqref{tau_stst}. Then $0\le \tau_*<\lambda <\tau_{**}\le 1$. The vectors $\overline{\theta}_{(1)}$, $\overline{\theta}_{(2)}$ are defined by formula \eqref{th1th2_def}, the number $\tilde \lambda \in (0, \, 1)$ is defined by \eqref{1th11l2l}, and the numbers $\nu_{(1)}$, $\nu_{(2)}$, by formulas \eqref{theta_mukj}, \eqref{nu12_def}. As in the previous case,we get \eqref{nu_a_12}, \eqref{nu1lnu_pth} and \eqref{nu2lnu_pth}. In addition, from \eqref{rho_i1} it follows that
\begin{align}
\label{th1li2} \theta_{(1),l}> 2 > \theta_{(2),l}.
\end{align}

If $n\le \prod _{i=1}^{\mu_2}k_{\sigma(i)} \prod _{i=\mu_2+1}^d k_{\sigma(i)}^{2/q_{\sigma(i)}}$, inequalities \eqref{nu1lnu_pth} and \eqref{nu2lnu_pth} can be written, respectively, as follows:
\begin{align}
\label{nunlmu1}
\nu_{(1)}^{1-\tilde \lambda}\nu_{(2)}^{\tilde \lambda} \prod _{j=1}^{\mu_2} k_{\sigma(j)} ^{1/q_{\sigma(j)} - 1/\theta _{\sigma(j)}(\overline{\alpha})} \le \nu_{(1)}\prod _{j=1}^{\mu_2} k_{\sigma(j)} ^{1/q_{\sigma(j)} - 1/\theta _{(1),\sigma(j)}},
\end{align}
\begin{align}
\label{nunlmu2}
\nu_{(1)}^{1-\tilde \lambda}\nu_{(2)}^{\tilde \lambda} \prod _{j=1}^{\mu_2} k_{\sigma(j)} ^{1/q_{\sigma(j)} - 1/\theta _{\sigma(j)}(\overline{\alpha})} \le \nu_{(2)}\prod _{j=1}^{\mu_2} k_{\sigma(j)} ^{1/q_{\sigma(j)} - 1/\theta _{(2),\sigma(j)}};
\end{align}
hence,
$$
\frac{\nu_{(1)}}{\nu_{(2)}} = \prod _{j=1}^{\mu_2} k_{\sigma(j)} ^{1/\theta _{(1),\sigma(j)}-1/\theta _{(2), \sigma(j)}}.
$$
Therefore, the inequalities in \eqref{nunlmu1}, \eqref{nunlmu2} turn to equalities, and we get \eqref{phi12_eq}; this contradicts to generality of position.

If $\mu_2+1 \le t \le \nu_1$, $\prod _{i=1}^{t-1} k_{\sigma(i)} \prod _{i=t}^d k_{\sigma(i)} ^{2/q_{\sigma(i)}} < n\le \prod _{i=1}^t k_{\sigma(i)} \prod _{i=t+1}^d k_{\sigma(i)} ^{2/q_{\sigma(j)}}$, the inequalities \eqref{nu1lnu_pth} and \eqref{nu2lnu_pth} can be written, respectively, as \eqref{nu_t_eq111}, \eqref{nu_t_eq222}; this again implies \eqref{phi12_eq}, which contradicts to generality of position.

Let $\prod _{i=1}^{\nu_1} k_{\sigma(i)} \prod _{i=\nu_1+1}^d k_{\sigma(i)} ^{2/q_{\sigma(i)}}< n \le \prod _{i=1}^{\nu_1} k_{\sigma(i)}\cdot k_l \prod _{\nu_1+1\le i\le d, \, \sigma(i)\ne l} k_{\sigma(i)} ^{2/q_{\sigma(i)}}$. From \eqref{th1li2} and the definition of $D$ it follows that, for $\mu_2+1\le \nu_1$, the inequalities $1>\omega_{\theta_{(1),l},q_l}\ge \omega_{\theta_{(1),\sigma(\nu_1)},q_{\sigma(\nu_1)}}$ hold, and for $\mu_2+1>\nu_1$, the inequalities $2>\theta _{(1),l}\ge q_l$ hold. Hence, \eqref{nu1lnu_pth} and \eqref{nu2lnu_pth} can be written, respectively, as follows (see \eqref{phi_case20}, \eqref{phi_case3}, \eqref{th1li2}):
$$
\nu_{(1)}^{1-\tilde \lambda}\nu_{(2)}^{\tilde \lambda} \prod _{j=1}^{\nu_1} k_{\sigma(j)} ^{1/q_{\sigma(j)} - 1/\theta _{\sigma(j)}(\overline{\alpha})} n^{-1/2} \prod _{i=1}^{\nu_1} k_{\sigma(i)} ^{1/2} \prod _{i=\nu_1+1}^d k_{\sigma(i)} ^{1/q_{\sigma(i)}} \le
$$
$$
\le\nu_{(1)} \prod _{j=1}^{\nu_1} k_{\sigma(j)} ^{1/q_{\sigma(j)} - 1/\theta _{(1),\sigma(j)}} \Bigl( n^{1/2} \prod _{i=1}^{\nu_1} k_{\sigma(i)} ^{-1/2} \prod _{i=\nu_1+1}^d k_{\sigma(i)} ^{-1/q_{\sigma(i)}}\Bigr) ^{\frac{1/q_{l} - 1/ \theta _{(1),l}}{1/2-1/q_{l}}},
$$
$$
\nu_{(1)}^{1-\tilde \lambda}\nu_{(2)}^{\tilde \lambda} \prod _{j=1}^{\nu_1} k_{\sigma(j)} ^{1/q_{\sigma(j)} - 1/\theta _{\sigma(j)}(\overline{\alpha})} n^{-1/2} \prod _{i=1}^{\nu_1} k_{\sigma(i)} ^{1/2} \prod _{i=\nu_1+1}^d k_{\sigma(i)} ^{1/q_{\sigma(i)}} \le
$$
$$
\le \nu_{(2)} \prod _{j=1}^{\nu_1} k_{\sigma(j)} ^{1/q_{\sigma(j)} - 1/\theta _{(2),\sigma(j)}} n^{-1/2} \prod _{i=1}^{\nu_1} k_{\sigma(i)} ^{1/2} \prod _{i=\nu_1+1}^d k_{\sigma(i)} ^{1/q_{\sigma(i)}};
$$
this together with \eqref{1th11l2l} and the equality $\theta_l(\overline{\alpha})=2$ yields that
$$
\frac{\nu_{(1)}}{\nu_{(2)}} \ge \prod _{j=1}^{\nu_1} k_{\sigma(j)} ^{1/\theta_{(1),\sigma(j)} - 1/\theta _{(2),\sigma(j)}} \Bigl( n^{1/2} \prod _{i=1}^{\nu_1} k_{\sigma(i)} ^{-1/2} \prod _{i=\nu_1+1}^d k_{\sigma(i)} ^{-1/q_{\sigma(i)}}\Bigr) ^{\frac{1/\theta_{(1),l} - 1/ \theta _{(2),l}}{1/2-1/q_l}},
$$
$$
\frac{\nu_{(1)}}{\nu_{(2)}} \le \prod _{j=1}^{\nu_1} k_{\sigma(j)} ^{1/\theta_{(1),\sigma(j)} - 1/\theta _{(2),\sigma(j)}}.
$$
Applying \eqref{s_def3}, \eqref{nu1_nu2}, \eqref{segm_def_delta1}, \eqref{th1li2}, we get
\begin{align}
\label{est_predv4} s_l \le \Bigl(n^{1/2} \prod _{i=1}^{\nu_1} k_{\sigma(i)}^{-1/2} \prod _{i=\nu_1+1}^d k_{\sigma(i)} ^{-1/q_{\sigma(i)}}\Bigr)^{\frac{1}{1/2-1/q_l}}\prod _{\sigma(i)\in I_\omega} \left(\frac{k_{\sigma(i)}}{s_{\sigma(i)}}\right) ^{\frac{1/\theta_{(2),\sigma(i)} - 1/\theta_{(1),\sigma(i)}}{1/\theta_{(2),l}-1/\theta_{(1),l}}},
\end{align}
\begin{align}
\label{est_predv5} s_l \ge \prod _{\sigma(i)\in I_\omega} \left(\frac{k_{\sigma(i)}}{s_{\sigma(i)}}\right) ^{\frac{1/\theta_{(2),\sigma(i)} - 1/\theta_{(1),\sigma(i)}}{1/\theta_{(2),l}-1/\theta_{(1),l}}}.
\end{align}

If $n>\prod _{i=1}^{\nu_1} k_{\sigma(i)}\cdot k_l \prod _{\nu_1+1\le i\le d, \, \sigma(i)\ne l} k_{\sigma(i)} ^{2/q_{\sigma(i)}}$, then inequality \eqref{nu1lnu_pth} can be written as follows:
$$
\nu_{(1)}^{1-\tilde \lambda}\nu_{(2)}^{\tilde \lambda} \prod _{j=1}^{\nu_1} k_{\sigma(j)} ^{1/q_{\sigma(j)} - 1/\theta _{\sigma(j)}(\overline{\alpha})} n^{-1/2} \prod _{i=1}^{\nu_1} k_{\sigma(i)} ^{1/2} \prod _{i=\nu_1+1}^d k_{\sigma(i)} ^{1/q_{\sigma(i)}} \le
$$
$$
\le \nu_{(1)} \prod _{j=1}^{\nu_1} k_{\sigma(j)} ^{1/q_{\sigma(j)} - 1/\theta _{(1),\sigma(j)}} k_l^{1/q_l - 1/\theta_{(1), l}} n^{-1/2} \prod _{i=1}^{\nu_1} k_{\sigma(i)} ^{1/2} \cdot k_l^{1/2}\prod _{\nu_1+1\le i\le d,\, \sigma(i)\ne l} k_{\sigma(i)} ^{1/q_{\sigma(i)}};
$$
inequality \eqref{nu2lnu_pth} is the same as in the previous case. This yields \eqref{est_predv5} and the inequality
$$
\frac{\nu_{(1)}}{\nu_{(2)}} \ge \prod _{i=1}^{\nu_1} k_{\sigma(i)}^{1/\theta_{(1),\sigma(i)}-1/\theta_{(2),\sigma(i)}} \cdot k_l^{1/\theta_{(1),l}-1/\theta_{(2),l}};
$$
hence, by \eqref{s_def3}, \eqref{nu1_nu2}, \eqref{segm_def_delta1}, \eqref{th1li2}, we get
\begin{align}
\label{est_predv6} s_l \le k_l\prod _{\sigma(i)\in I_\omega} \left(\frac{k_{\sigma(i)}}{s_{\sigma(i)}}\right) ^{\frac{1/\theta_{(2),\sigma(i)} - 1/\theta_{(1),\sigma(i)}}{1/\theta_{(2),l}-1/\theta_{(1),l}}}.
\end{align}

Thus, we obtained
\begin{Sta}
\label{i2_sta} Let \eqref{psi_eq_nu_phi} hold. Then
\begin{enumerate}
\item If $n\le \prod _{i=1}^{\nu_1} k_{\sigma(i)} \prod _{i=\nu_1+1}^d k_{\sigma(i)} ^{2/q_{\sigma(i)}}$, we have $I_2=\varnothing$.

\item If $\prod _{i=1}^{\nu_1} k_{\sigma(i)} \prod _{i=\nu_1+1}^d k_{\sigma(i)} ^{2/q_{\sigma(i)}}< n \le \prod _{i=1}^{\nu_1} k_{\sigma(i)}\cdot k_l \prod _{\nu_1+1\le i\le d, \, \sigma(i)\ne l} k_{\sigma(i)} ^{2/q_{\sigma(i)}}$, $l\in I_2$, then \eqref{est_predv4}, \eqref{est_predv5} hold.

\item If $n>\prod _{i=1}^{\nu_1} k_{\sigma(i)}\cdot k_l \prod _{\nu_1+1\le i\le d, \, \sigma(i)\ne l} k_{\sigma(i)} ^{2/q_{\sigma(i)}}$, $l\in I_2$, then \eqref{est_predv5}, \eqref{est_predv6} hold.
\end{enumerate}
\end{Sta}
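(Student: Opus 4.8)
The plan is to prove Proposition~\ref{i2_sta} by transporting, essentially verbatim, the argument that handled the case $l\in I_q$ (Proposition~\ref{iq_sta}) from the hyperplane $x_l=1/q_l$ to the hyperplane $x_l=1/2$. So the job is to set up the analogous segment inside the simplex $\Delta$, check that the auxiliary exponent vectors one produces still lie in $\Theta$, and then run the case split on $n$, recording in each regime which of the three alternatives of the proposition holds.

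First I would fix $l\in I_2$, note $\sigma^{-1}(l)\in\{\nu_1+1,\dots,\nu_2\}$ from \eqref{iq2_nu_nu}, and cut $\Delta={\rm conv}\,\{1/\overline{p}_{\alpha_j}\}_{j=1}^m$ by all defining relations of $Z$ except $x_l=1/2$, i.e.\ by \eqref{segm_def_delta1}. By assertion~2 of Definition~\ref{gen_pos} this intersection is a segment $[1/\overline{\rho}_{(1)},1/\overline{\rho}_{(2)}]$ with $1/\overline{\theta}(\overline{\alpha})$ in its relative interior; normalising so that $\rho_{(1),l}>2>\rho_{(2),l}$ gives \eqref{rho_i1}. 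Then I would introduce the open ``strictly ordered'' region $D$, extend the segment inside $D$ by the cut-off parameters $\tau_*,\tau_{**}$ of \eqref{tau_st}, \eqref{tau_stst} to endpoints $1/\overline{\theta}_{(1)},1/\overline{\theta}_{(2)}$ with $1/\overline{\theta}(\overline{\alpha})$ between them (as in \eqref{1th11l2l}), and verify $\overline{\theta}_{(1)},\overline{\theta}_{(2)}\in\Theta$ by the same face-plus-complementarity discussion as in the $l\in I_q$ case: each endpoint is either a boundary point of $\Delta$ sitting in the relative interior of an $(m-2)$-face, or an interior point at which exactly one further defining relation of some $Z'\in\tilde{\cal Z}_m$ switches on, while general position excludes every degenerate configuration and supplies complementarity of affine hulls. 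Writing $1/\overline{\theta}_{(k)}=\sum_j\tilde\mu_{k,j}/\overline{p}_{\alpha_j}$ and $\nu_{(k)}=\prod_j\nu_{\alpha_j}^{\tilde\mu_{k,j}}$, Proposition~\ref{sta_nu12} and assertion~1 of Definition~\ref{gen_pos} give \eqref{nu1_nu2} and \eqref{nu_a_12}, whence minimality of $\Psi$ (via \eqref{psi_def}, \eqref{psi_eq_nu_phi}) yields \eqref{nu1lnu_pth}, \eqref{nu2lnu_pth}.

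The substance is then the case split on $n$. For $n\le\prod_{i=1}^{\mu_2}k_{\sigma(i)}\prod_{i=\mu_2+1}^{d}k_{\sigma(i)}^{2/q_{\sigma(i)}}$ I would expand \eqref{nu1lnu_pth}, \eqref{nu2lnu_pth} using \eqref{phi_case1}: as in \eqref{nunlmu1}, \eqref{nunlmu2} the two inequalities pinch $\nu_{(1)}/\nu_{(2)}$ to a single value, forcing both to be equalities, hence \eqref{phi12_eq}, which contradicts assertion~4 of Definition~\ref{gen_pos}; so this regime cannot occur and $I_2=\varnothing$ (part~1). The same pinching occurs, via \eqref{nu_t_eq111}, \eqref{nu_t_eq222}, for every intermediate regime $\prod_{i=1}^{t-1}k_{\sigma(i)}\prod_{i=t}^{d}k_{\sigma(i)}^{2/q_{\sigma(i)}}<n\le\prod_{i=1}^{t}k_{\sigma(i)}\prod_{i=t+1}^{d}k_{\sigma(i)}^{2/q_{\sigma(i)}}$ with $\mu_2+1\le t\le\nu_1$, again contradicting general position. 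Only the two ``top'' regimes survive: $\prod_{i=1}^{\nu_1}k_{\sigma(i)}\prod_{i=\nu_1+1}^{d}k_{\sigma(i)}^{2/q_{\sigma(i)}}<n\le\prod_{i=1}^{\nu_1}k_{\sigma(i)}\cdot k_l\prod_{\nu_1<i\le d,\,\sigma(i)\ne l}k_{\sigma(i)}^{2/q_{\sigma(i)}}$ and $n>\prod_{i=1}^{\nu_1}k_{\sigma(i)}\cdot k_l\prod_{\nu_1<i\le d,\,\sigma(i)\ne l}k_{\sigma(i)}^{2/q_{\sigma(i)}}$; in these I would expand \eqref{nu1lnu_pth}, \eqref{nu2lnu_pth} using \eqref{phi_case20}, \eqref{phi_case3} together with the defining equality $\theta_l(\overline{\alpha})=2$, and combine with \eqref{nu1_nu2}, \eqref{s_def3}, \eqref{segm_def_delta1}, \eqref{th1li2} to solve for $s_l$, obtaining \eqref{est_predv4} and \eqref{est_predv5} in the first, and \eqref{est_predv5} and \eqref{est_predv6} in the second (parts~2--3).

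I expect the main obstacle to be the bookkeeping rather than any new idea: one must keep the index blocks $\{1,\dots,\mu_1\}$, $\{\mu_1+1,\dots,\mu_2\}=\sigma^{-1}(I_q)$, the $I_\omega$-blocks, $\{\nu_1+1,\dots,\nu_2\}=\sigma^{-1}(I_2)$ and $\{\nu_2+1,\dots,d\}$ straight, use the correct thresholds to decide when coordinate $l$ becomes ``active'' in the formula for $\Phi$, and in each regime judge cleanly whether \eqref{nu1lnu_pth}, \eqref{nu2lnu_pth} collapse to an equality — excluding that regime by general position — or leave the single free parameter $s_l$ to be read off. The verification $\overline{\theta}_{(1)},\overline{\theta}_{(2)}\in\Theta$ is the only other subtle point, but it is word for word the $l\in I_q$ argument with $q_l$ replaced by $2$.
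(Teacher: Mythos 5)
Your proposal follows the paper's own proof essentially step for step: the same segment construction via \eqref{segm_def_delta1} with normalization \eqref{rho_i1}, the same open region $D$ and endpoints $\overline{\theta}_{(1)},\overline{\theta}_{(2)}\in\Theta$, the same pinching of $\nu_{(1)}/\nu_{(2)}$ via \eqref{nunlmu1}--\eqref{nunlmu2} and \eqref{nu_t_eq111}--\eqref{nu_t_eq222} to exclude the low-$n$ regimes by assertion~4 of general position, and the same extraction of \eqref{est_predv4}--\eqref{est_predv6} in the two surviving regimes. The approach and the case analysis are correct and identical to the paper's.
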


{\bf The case $l\in I_{\omega_r}$, $1\le r\le L$.} Then $\sigma^{-1}(l) \in \{t_r, \, t_{r+1}, \, \dots, \, t_r'\}$ (see \eqref{i_om_j_tj}). Consider the set of points $(1/p_1, \, \dots, \, 1/p_d)\in \Delta$ defined by the conditions
\begin{align}
\label{segm_def_delta2}
\begin{array}{c}
p_{\sigma(i)} = q_{\sigma(i)}, \; \mu_1+1\le i\le \mu_2; \\
p_{\sigma(i)} = 2, \; \nu_1+1\le i\le \nu_2; \\
\omega' _{p_{\sigma(t_s)}, q_{\sigma(t_s)}} = \omega' _{p_{\sigma(t_s+1)}, q_{\sigma(t_s+1)}} = \dots =\omega' _{p_{\sigma(t'_s)}, q_{\sigma(t'_s)}}, \; 1\le s\le L, \; s\ne r; \\ \omega'_{p_{\sigma(i)},q_{\sigma(i)}} = \omega'_{p_{\sigma(i')},q_{\sigma(i')}}, \quad i, \; i' \in \{t_r, \, t_{r+1}, \, \dots, \, t_r'\} \backslash \{\sigma^{-1}(l)\},
\end{array}
\end{align}
where $\omega'_{p,q}$ is from formula \eqref{om_prime}.

As in previous two cases, by generality of position, this set is a segment $[1/\overline{\rho}_{(1)}, \, 1/\overline{\rho}_{(2)}]$ with interior point $1/\overline{\theta}(\overline{\alpha})$. The number $\lambda \in (0, \, 1)$ is defined by \eqref{lam_def1}.

Let
$$
D = \{(1/p_1, \, \dots, \, 1/p_d):\; p_{\sigma(i)}> q_{\sigma(i)}, \; i\in \overline{1, \, \mu_1},
$$
$$
2<p_{\sigma(i)} < q_{\sigma(i)}, \; i\in \overline{\mu_2+1, \, \nu_1}, \;p_{\sigma(i)}<2, \; i\in \overline{\nu_2+1, \, d},
$$
$$
\omega_{p_{\sigma(t_r-1)},q_{\sigma(t_r-1)}}< \omega_{p_i,q_i}  < \omega _{p_{\sigma(t'_r+1)},q_{\sigma(t'_r+1)}}, \; i\in I_{\omega_r},
$$
$$
\omega_{p_{\sigma(\mu_2+1)}, q_{\sigma(\mu_2+1)}}
< \dots < \omega _{p_{\sigma(t_1-1)},q_{\sigma(t_1-1)}} < \omega _{p_{\sigma(t_1)},q_{\sigma(t_1)}}<$$$$<
\omega _{p_{\sigma(t'_1+1)},q_{\sigma(t'_1+1)}}< \dots < \omega _{p_{\sigma(t_2-1)},q_{\sigma(t_2-1)}}<\omega _{p_{\sigma(t_2)},q_{\sigma(t_2)}} < \omega _{p_{\sigma(t'_2+1)},q_{\sigma(t'_2+1)}} < $$$$<\dots < 
\omega _{p_{\sigma(t_r-1)},q_{\sigma(t_r-1)}}< \omega _{p_{\sigma(t'_r+1)},q_{\sigma(t'_r+1)}}<\dots<$$
$$
<\omega_{p_{\sigma(t_{r+1}-1)},q_{\sigma(t_{r+1}-1)}}<\omega_{p_{\sigma(t_{r+1})},q_{\sigma(t_{r+1})}} <\omega_{p_{\sigma(t'_{r+1}+1)},q_{\sigma(t'_{r+1}+1)}} < \dots
$$
$$
 < \omega _{p_{\sigma(t'_L+1)},q_{\sigma(t'_L+1)}}< \omega _{p_{\sigma(t'_L+2)},q_{\sigma(t'_L+2)}} < \dots < \omega _{p_{\sigma(\nu_2)}, q_{\sigma(\nu_2)}}\}.$$
Then the set $D$ is open in $[0, \, 1]^d$ and $\overline{\theta}(\overline{\alpha})\in D$.

We define the numbers $\tau_*$, $\tau_{**}$ by formulas \eqref{tau_st}, \eqref{tau_stst}. Then $0\le \tau_*<\lambda <\tau_{**}\le 1$. The vectors $\overline{\theta}_{(1)}$, $\overline{\theta}_{(2)}$ are defined by formula \eqref{th1th2_def}, the number $\tilde \lambda \in (0, \, 1)$ is defined by \eqref{1th11l2l}, and the numbers, $\nu_{(1)}$, $\nu_{(2)}$, by formulas \eqref{theta_mukj}, \eqref{nu12_def}.

As in previous two cases, we get \eqref{nu_a_12}, \eqref{nu1lnu_pth} and \eqref{nu2lnu_pth}. 

If $n\le \prod _{i=1}^{\mu_2}k_{\sigma(i)} \prod _{i=\mu_2+1}^d k_{\sigma(i)}^{2/q_{\sigma(i)}}$, inequalities \eqref{nu1lnu_pth} and \eqref{nu2lnu_pth} can be written as \eqref{nunlmu1} and \eqref{nunlmu2}, respectively. If $n> \prod _{i=1}^{\nu_1} k_{\sigma(i)} \prod _{i=\nu_1+1}^d k_{\sigma(i)} ^{2/q_{\sigma(i)}}$, as in the case $l\in I_q$, we get \eqref{nu_t_eq1118}, \eqref{nu_t_eq2228}. If $t\in \{\mu_2+1, \, \dots, \, t_r-1\} \cup \{t'_r+1, \, \dots, \, \nu_1\}$, $\prod _{i=1}^{t-1} k_{\sigma(i)} \prod _{i=t}^d k_{\sigma(i)} ^{2/q_{\sigma(i)}} < n\le \prod _{i=1}^t k_{\sigma(i)} \prod _{i=t+1}^d k_{\sigma(i)} ^{2/q_{\sigma(j)}}$, inequalities \eqref{nu1lnu_pth} and \eqref{nu2lnu_pth} can be written as \eqref{nu_t_eq111} and \eqref{nu_t_eq222}, respectively. In all these cases, \eqref{phi12_eq} holds; this contradicts to generality of position.

Thus, the following assertion holds:
\begin{Sta}
\label{iomega_sta} Let \eqref{psi_eq_nu_phi} hold. Then,
for $n\le \prod _{i=1}^{t_r-1} k_{\sigma(i)}\prod _{i=t_r}^d k_{\sigma(i)} ^{2/q_{\sigma(i)}}$ or $n> \prod _{i=1}^{t'_r} k_{\sigma(i)} \prod _{i=t'_r+1}^d k_{\sigma(i)} ^{2/q_{\sigma(i)}}$, we have $I_{\omega_r}=\varnothing$.
\end{Sta}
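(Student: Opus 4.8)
The plan is to argue by contradiction: suppose $I_{\omega_r}\ne\varnothing$. Since $\#I_{\omega_r}\ge 2$, fix any index $l\in I_{\omega_r}$; by \eqref{i_om_j_tj}, $\sigma^{-1}(l)\in\{t_r,\dots,t'_r\}$, where $\mu_2+1\le t_r<t'_r\le\nu_1$. I will show that if $n$ lies in either of the two ranges in the statement, then the optimality relation \eqref{psi_eq_nu_phi} cannot hold, which forces $I_{\omega_r}=\varnothing$.

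First I would set up the perturbation exactly as in the cases $l\in I_q$ and $l\in I_2$ treated above. Delete from the defining relations of $Z$ the requirement that $\sigma^{-1}(l)$ belong to the $\omega'$-block $I_{\omega_r}$; the remaining conditions form the system \eqref{segm_def_delta2}, whose solution set, by general position (assertion 2 of Definition \ref{gen_pos}), meets $\Delta={\rm conv}\,\{1/\overline{p}_{\alpha_j}\}_{j=1}^m$ in a segment $[1/\overline{\rho}_{(1)},1/\overline{\rho}_{(2)}]$ whose relative interior contains $1/\overline{\theta}(\overline{\alpha})$. Then I would introduce the open set $D$ (the strict chain of $\omega$-values together with the type inequalities from \eqref{sigma_choice}), so that $\overline{\theta}(\overline{\alpha})\in D$, take $\overline{\theta}_{(1)},\overline{\theta}_{(2)}$ to be the endpoints of the maximal subsegment of $[1/\overline{\rho}_{(1)},1/\overline{\rho}_{(2)}]$ contained in $D$ (formulas \eqref{tau_st}, \eqref{tau_stst}, \eqref{th1th2_def}), and choose $\tilde\lambda\in(0,1)$ by \eqref{1th11l2l}. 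As in those two cases, a short case analysis (assertions 1 and 2 of Definition \ref{gen_pos}) shows that each $\overline{\theta}_{(k)}$ either lies in the relative interior of an $(m-2)$-dimensional face of $\Delta$ (when $\tau_*=0$, resp. $\tau_{**}=1$) or lies in the relative interior of $\Delta$ and acquires exactly one new admissible coincidence of one of the three types in Definition \ref{zm_def1}, whose affine hull is complementary to ${\rm aff}\,\Delta$; hence $\overline{\theta}_{(1)},\overline{\theta}_{(2)}\in\Theta$, with $\overline{\theta}_{(1)}\ne\overline{\theta}_{(2)}$ and both distinct from $\overline{\theta}(\overline{\alpha})$. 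Proposition \ref{sta_nu12} then gives \eqref{nu_a_12}, and, since $\overline{\theta}_{(1)},\overline{\theta}_{(2)}\in\Theta$, the minimality of $\Psi$ yields the two inequalities \eqref{nu1lnu_pth} and \eqref{nu2lnu_pth}.

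The crux is then the elementary observation that the range $n\le\prod_{i=1}^{t_r-1}k_{\sigma(i)}\prod_{i=t_r}^d k_{\sigma(i)}^{2/q_{\sigma(i)}}$ is the union of the range $n\le\prod_{i=1}^{\mu_2}k_{\sigma(i)}\prod_{i=\mu_2+1}^d k_{\sigma(i)}^{2/q_{\sigma(i)}}$ and the ranges $\prod_{i=1}^{t-1}k_{\sigma(i)}\prod_{i=t}^d k_{\sigma(i)}^{2/q_{\sigma(i)}}<n\le\prod_{i=1}^{t}k_{\sigma(i)}\prod_{i=t+1}^d k_{\sigma(i)}^{2/q_{\sigma(i)}}$ over $\mu_2+1\le t\le t_r-1$, while $n>\prod_{i=1}^{t'_r}k_{\sigma(i)}\prod_{i=t'_r+1}^d k_{\sigma(i)}^{2/q_{\sigma(i)}}$ is the union of the analogous ranges over $t'_r+1\le t\le\nu_1$ together with $n>\prod_{i=1}^{\nu_1}k_{\sigma(i)}\prod_{i=\nu_1+1}^d k_{\sigma(i)}^{2/q_{\sigma(i)}}$. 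In each such range the pertinent branch of $\Phi$ is one of \eqref{phi_case1}, \eqref{phi_case2}, \eqref{phi_case3} (with Remark \ref{t1t1_pr} used when $n$ crosses a block of equal $\omega$-values), and on the coordinates that enter it nontrivially the points $\overline{\theta}(\overline{\alpha}),\overline{\theta}_{(1)},\overline{\theta}_{(2)}$ are already linked by the fixed relations \eqref{segm_def_delta2}; substituting, \eqref{nu1lnu_pth} and \eqref{nu2lnu_pth} become the pairs \eqref{nunlmu1}/\eqref{nunlmu2}, \eqref{nu_t_eq111}/\eqref{nu_t_eq222}, or \eqref{nu_t_eq1118}/\eqref{nu_t_eq2228} of mutually opposite inequalities for $\nu_{(1)}/\nu_{(2)}$. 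Therefore both hold with equality, \eqref{phi12_eq} follows, and $\overline{\theta}_{(1)}$ and $\overline{\theta}_{(2)}$ become two distinct minimizers on the right-hand side of \eqref{dn_low_est_mod}, which contradicts assertion 4 of Definition \ref{gen_pos}. Hence $I_{\omega_r}=\varnothing$.

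The step I expect to be the main obstacle is the verification $\overline{\theta}_{(1)},\overline{\theta}_{(2)}\in\Theta$ in the second paragraph: one must rule out an endpoint of the $D$-subsegment acquiring two new equalities at once (which would throw it out of $\tilde{\cal Z}_{m'}$ for the relevant $m'$) and must confirm complementarity of the new plane with ${\rm aff}\,\Delta$. This is, however, exactly the argument already carried out for $l\in I_q$ — in which the $\omega'$-block containing the distinguished coordinate plays no special role — so it applies verbatim; the remainder is the same bookkeeping with the three branches of $\Phi$ as in Propositions \ref{iq_sta} and \ref{i2_sta}.
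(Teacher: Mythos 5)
Your proposal follows the paper's own argument essentially verbatim: the same segment through $1/\overline{\theta}(\overline{\alpha})$ cut out by \eqref{segm_def_delta2}, the same open set $D$ and endpoints $\overline{\theta}_{(1)},\overline{\theta}_{(2)}\in\Theta$, the same decomposition of the two $n$-ranges into the subranges where \eqref{nu1lnu_pth}/\eqref{nu2lnu_pth} reduce to the opposite pairs \eqref{nunlmu1}/\eqref{nunlmu2}, \eqref{nu_t_eq111}/\eqref{nu_t_eq222}, \eqref{nu_t_eq1118}/\eqref{nu_t_eq2228}, forcing \eqref{phi12_eq} and contradicting assertion 4 of Definition \ref{gen_pos}. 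The argument is correct as stated.
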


Comparing Propositions \ref{iq_sta}, \ref{i2_sta} and \ref{iomega_sta}, we get:

\begin{enumerate}
\item If $I_q \ne \varnothing$, then $I_\omega \sqcup I_2 = \varnothing$.

\item If $I_2 \ne \varnothing$, then $I_q \sqcup I_\omega = \varnothing$.

\item If $I_{\omega_r} \ne \varnothing$, then $I_q \sqcup (I_\omega \backslash I_{\omega_r}) \sqcup I_2 = \varnothing$ (therefore, $r=1$, $I_\omega = I_{\omega_1}$).

\item Let $I=I_q$, $l\in I_q$. Then, if $n\le \prod _{1\le i\le \mu_2, \, \sigma(i)\ne l} k_{\sigma(i)} \cdot k_l ^{2/q_l} \prod _{i=\mu_2+1}^d k_{\sigma(i)} ^{2/q_{\sigma(i)}}$, we get $1\le s_l\le k_l$; if $\prod _{1\le i\le \mu_2, \, \sigma(i)\ne l} k_{\sigma(i)} \cdot k_l ^{2/q_l} \prod _{i=\mu_2+1}^d k_{\sigma(i)} ^{2/q_{\sigma(i)}}< n \le \prod _{i=1}^{\mu_2} k_{\sigma(i)} \prod _{i=\mu_2+1}^d k_{\sigma(i)} ^{2/q_{\sigma(i)}}$, we get 
\begin{align}
\label{s_l_est_n12kl}
1\le \Bigl(n^{1/2} \prod _{1\le i\le \mu_2, \, \sigma(i)\ne l} k_{\sigma(i)}^{-1/2} \cdot k_l ^{-1/q_l} \prod _{i=\mu_2+1}^d k_{\sigma(i)} ^{-1/q_{\sigma(i)}}\Bigr)^{\frac{1}{1/2-1/q_l}}\le s_l\le k_l.
\end{align}

\item Let $I=I_2$, $l\in I_2$. Then, if $n>\prod _{i=1}^{\nu_1} k_{\sigma(i)}\cdot k_l \prod _{\nu_1+1\le i\le d, \, \sigma(i)\ne l} k_{\sigma(i)} ^{2/q_{\sigma(i)}}$, we get $1\le s_l\le k_l$; if $\prod _{i=1}^{\nu_1} k_{\sigma(i)} \prod _{i=\nu_1+1}^d k_{\sigma(i)} ^{2/q_{\sigma(i)}}< n \le \prod _{i=1}^{\nu_1} k_{\sigma(i)}\cdot k_l \prod _{\nu_1+1\le i\le d, \, \sigma(i)\ne l} k_{\sigma(i)} ^{2/q_{\sigma(i)}}$, we get
\begin{align}
\label{s_l_est_n121}
1\le s_l \le \Bigl(n^{1/2} \prod _{i=1}^{\nu_1} k_{\sigma(i)}^{-1/2} \prod _{i=\nu_1+1}^d k_{\sigma(i)} ^{-1/q_{\sigma(i)}}\Bigr)^{\frac{1}{1/2-1/q_l}} \le k_l.
\end{align}
\end{enumerate}

Hence the inequalities $1\le s_l\le k_l$ ($l\in I$) are proved in the cases $I= I_q$ and $I=I_2$.

It remains to consider the case $I=I_\omega$. Then $I_\omega = I_{\omega_1}$, $\# I_{\omega} \stackrel{\eqref{card_i_zm}}{=} m$, $t_1'=t_1+m-1$ (see \eqref{i_om_j_tj}),
\begin{align}
\label{n_gr} \prod _{i=1}^{t_1-1} k_{\sigma(i)} \prod _{i=t_1}^d k_{\sigma(i)}^{2/q_{\sigma(i)}} <n\le \prod _{i=1}^{t_1+m-1} k_{\sigma(i)} \prod _{i=t_1+m}^d k_{\sigma(i)}^{2/q_{\sigma(i)}}
\end{align}
(see Proposition \ref{iomega_sta}).

First we define $s_i$ for $i\in I_\omega$. To this end, we need some auxiliary constructions and assertions.

Recall that the numbers $\mu$, $\nu$ are defined by \eqref{mu_nu_def}.

Let
$$
D=\{(1/p_1, \, \dots, \, 1/p_d)\in \Delta:\; p_{\sigma(i)} \ge q_{\sigma(i)}, \; i\le \mu;
$$
$$
p_{\sigma(i)} \in [2, \, q_{\sigma(i)}], \; \mu+1\le i \le \nu; \; p_{\sigma(i)}\le 2, \; \nu+1\le i\le d;
$$
$$
\omega_{p_{\sigma(\mu+1)},q_{\sigma(\mu+1)}}\le \omega_{p_{\sigma(\mu+2)},q_{\sigma(\mu+2)}}\le \dots \le \omega_{p_{\sigma(t_1-1)},q_{\sigma(t_1-1)}}\le \omega_{p_{\sigma(t_1+m)},q_{\sigma(t_1+m)}} \le
$$
$$
\le \omega_{p_{\sigma(t_1+m+1)},q_{\sigma(t_1+m+1)}} \le \dots \le \omega_{p_{\sigma(\nu)},q_{\sigma(\nu)}},
$$
$$
\omega_{p_{\sigma(t_1-1)},q_{\sigma(t_1-1)}} \le \omega_{p_{\sigma(i)},q_{\sigma(i)}} \le \omega_{p_{\sigma(t_1+m)},q_{\sigma(t_1+m)}}, \; t_1\le i\le t_1+m-1\}.
$$
Then $1/\overline{\theta}(\overline{\alpha})$ belongs to relative interior of $D$.

Let $\{\pi_s\}_{s=1}^{s_*}$ be the set of all permutations of elements of $I_\omega$. Let
\begin{align}
\label{dsdef}
D_s = \{(1/p_1, \, \dots, \, 1/p_d)\in D:\; \omega_{p_{\pi_s(\sigma(t_1))},q_{\pi_s(\sigma(t_1))}} \le \dots \le \omega_{p_{\pi_s(\sigma(t_1+m-1))}, q_{\pi_s(\sigma(t_1+m-1))}}\}.
\end{align}
We obtain the partition of $D$ into convex polyhedra $D_s$, $1\le s\le s_*$. Notice that $\cap _{s=1}^{s_*} D_s=\{1/\overline{\theta}(\overline{\alpha})\}$ by the equality $\omega_{\theta_{\sigma(t_1)}(\overline{\alpha}), q_{\sigma(t_1)}} = \dots = \omega_{\theta_{\sigma(t_1+m-1)}(\overline{\alpha}), q_{\sigma(t_1+m-1)}}$ and generality of position (see assertion 2 of Definition \ref{gen_pos}).

Let
\begin{align}
\label{t0_pm_def}
T_+^0 = \{\sigma(i):\; 1\le i\le t_1-1\}, \quad T_-^0 = \{\sigma(i):\; t_1+m\le i\le d\}.
\end{align}
By \eqref{phi_case10}, \eqref{phi_case20}, \eqref{phi_case30}, \eqref{n_gr}, for each $s\in \{1, \, \dots, \, s_*\}$ there is a partition
\begin{align}
\label{i_razb_s} I=\{i_*(s)\} \sqcup I_+(s) \sqcup I_-(s)
\end{align}
such that for any $1/\overline{p}\in D_s$ the equality
\begin{align}
\label{phi_d_s} \Phi(\overline{p}, \, \overline{q}, \, \overline{k}, \, n) = \prod _{i\in T_0^+} k_i^{1/q_i-1/p_i} \prod _{i\in I} A_i(s)^{1/q_i-1/p_i}
\end{align}
holds, where
\begin{align}
\label{ais_def} A_i(s) = \begin{cases} k_i, & i\in I_+(s), \\ 1, & i\in I_-(s), \\ \Bigl(n^{1/2} \prod _{i\in T^+_0\sqcup I_+(s)} k_i^{-1/2} \prod _{i\in \{i_*(s)\} \sqcup T_-^0 \sqcup I_-(s)} k_i^{-1/q_i}\Bigr) ^{\frac{1}{1/2-1/q_{i_*(s)}}}, & i = i_*(s),\end{cases}
\end{align}
and
\begin{align}
\label{n_interval} \prod _{i\in T^+_0\sqcup I_+(s)} k_i \prod _{i\in \{i_*(s)\} \sqcup T_-^0 \sqcup I_-(s)} k_i^{2/q_i} < n \le \prod _{i\in T^+_0\sqcup I_+(s) \sqcup \{i_*(s)\} } k_i \prod _{i\in T_-^0 \sqcup I_-(s)} k_i^{2/q_i}.
\end{align}

We set
\begin{align}
\label{small_phi} \varphi(\overline{x}) = \log \Phi(1/\overline{x}, \, \overline{q}, \, \overline{k}, \, n), \quad \overline{x}\in D.
\end{align}
By \eqref{phi_d_s}, the restriction of $\varphi$ on the polyhedron $D_s$ is the affine function
\begin{align}
\label{phi_s_d_s}
\varphi_s(x_1, \, \dots, \, x_d) = \sum \limits _{i\in T^0_+} (1/q_i-x_i) \log k_i + \sum \limits _{i\in I} (1/q_i-x_i) \log A_i(s);
\end{align}
since the sets $D_s$ are closed, the function $\varphi$ is continuous on $D$.

\begin{Lem}
\label{conv_phi} The equality $\varphi = \max _{1\le s\le s_*}\varphi_s$ holds.
\end{Lem}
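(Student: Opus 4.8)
Since $D=\bigcup_{s=1}^{s_*}D_s$ and, by \eqref{phi_s_d_s}, $\varphi$ coincides on each $D_s$ with the affine function $\varphi_s$, every $\overline{x}\in D$ lies in some $D_{s'}$ with $\varphi(\overline{x})=\varphi_{s'}(\overline{x})$; hence $\varphi\le\max_{1\le s\le s_*}\varphi_s$ on $D$ automatically, and Lemma \ref{conv_phi} is equivalent to the inequality $\varphi_{s'}(\overline{x})\ge\varphi_s(\overline{x})$ for all $s$ and all $\overline{x}\in D_{s'}$ (equivalently, to the convexity of $\varphi$ on $D$). Both $\varphi_{s'}$ and $\varphi_s$ are written out in \eqref{phi_s_d_s} through the data $I_+(\cdot)$, $I_-(\cdot)$, $i_*(\cdot)$, $A_i(\cdot)$ which \eqref{i_razb_s}--\eqref{n_interval} attach to the orderings $\pi_{s'}$, $\pi_s$ of $I_\omega$, and they differ only in the factors indexed by $I$; so the claim reduces to comparing $\prod_{i\in I}A_i(s')^{1/q_i-x_i}$ with $\prod_{i\in I}A_i(s)^{1/q_i-x_i}$.

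\emph{Reduction to a single transposition.} Fix $\overline{x}\in D_{s'}$ and put $\overline{p}=1/\overline{x}$. One may assume the numbers $\omega_{p_i,q_i}$, $i\in I_\omega$, to be pairwise distinct (the other points of $D$ being limits of such, and $\varphi$ and every $\varphi_s$ being continuous), so that $\pi_{s'}$ is the ordering listing $I_\omega$ in increasing order of $\omega_{p_i,q_i}$. Starting from $\pi_s$ and successively interchanging two coordinates of $I_\omega$ that are consecutive in the current ordering but appear there in the wrong $\omega$-order, one gets a chain of orderings, each being one of the $\pi_t$, with strictly fewer inversions relative to $\pi_{s'}$ at each step, terminating at $\pi_{s'}$. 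Hence it suffices to show: if $a,b\in I_\omega$ are consecutive in $\pi_s$ with $\omega_{p_a,q_a}<\omega_{p_b,q_b}$, and $\pi_{s''}$ is obtained from $\pi_s$ by exchanging $a$ and $b$ (so that $a$ precedes $b$ in $\pi_{s''}$), then $\varphi_s(\overline{x})\le\varphi_{s''}(\overline{x})$.

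\emph{The transposition step and the main obstacle.} As $a,b$ are adjacent in $\pi_s$, the exchange changes the partial products occurring in \eqref{n_interval} only at the two positions of $a,b$, so only the following cases arise. If $a,b$ both lie in $I_+(s)$, or both in $I_-(s)$, then they lie, respectively, in $I_+(s'')$ or $I_-(s'')$; the index $i_*$ and the value $A_{i_*}$ are unchanged and $A_a,A_b$ keep their values, so $\varphi_s(\overline{x})=\varphi_{s''}(\overline{x})$. Otherwise one of $a,b$ is the threshold index of $s$ or of $s''$; reading off from \eqref{n_interval} where $n$ must lie for each of the two orderings (this is where the standing assumption $n\le k_1\cdots k_d/2$ is used) one determines $I_+(s''),I_-(s''),i_*(s'')$ explicitly from $I_+(s),I_-(s),i_*(s)$ and $a,b$, substitutes the corresponding values of $A_a,A_b,A_{i_*}$ from \eqref{ais_def}, and, after cancelling the factors common to $\varphi_s(\overline{x})$ and $\varphi_{s''}(\overline{x})$, is left with an elementary inequality between products of powers of $k_a,k_b,n$; its sign is governed by $q_a,q_b>2$ (whence $1/2-1/q_a>0$, $1/2-1/q_b>0$ and $1/q_i-1/p_i\le 0$ for $i\in I_\omega$ on $D$) together with $\omega_{p_a,q_a}<\omega_{p_b,q_b}$, and for $k_a=1$ or $k_b=1$ it is trivial. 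Chaining the steps yields $\varphi_s(\overline{x})\le\varphi_{s'}(\overline{x})=\varphi(\overline{x})$ for every $s$, i.e.\ $\varphi=\max_{1\le s\le s_*}\varphi_s$. The whole difficulty sits in this last case: keeping precise track of how the interval \eqref{n_interval} and the ``threshold'' value $A_{i_*}$ transform under the exchange of $a$ and $b$, and checking the sign of the remaining exponent in each resulting subcase.
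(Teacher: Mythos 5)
Your strategy is sound and its computational core coincides with the paper's: everything reduces to comparing the affine functions attached to two orderings of $I_\omega$ differing by one adjacent transposition, with a case analysis on where $n$ sits relative to the partial products $\prod_{i\le j}k_{\pi(i)}\prod_{i>j}k_{\pi(i)}^{2/q_{\pi(i)}}$, only the middle one of which is affected by the swap. Where you genuinely differ is in the globalization. The paper proves the local domination $\varphi_s|_{D_s}\ge\varphi_{s'}|_{D_s}$ only for polyhedra meeting along an $(m-2)$-dimensional face, and then propagates it along a generic segment joining interior points of $D_s$ and $D_{s'}$: the restriction of $\varphi$ to the segment is piecewise affine with each piece dominating its neighbours, hence convex, hence every $\varphi_{s'}$ lies below $\varphi$ on it. You instead fix $\overline{x}$ with pairwise distinct $\omega$-values (a dense set in $D$ under the general-position hypothesis in force here, so the reduction by continuity is legitimate) and bubble-sort $\pi_s$ into the sorted order by adjacent transpositions, each removing one inversion. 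This avoids perturbing the segment, at the price of needing the transposition inequality at a point that need not lie in either of the two polyhedra being compared; that is in fact harmless, since the comparison uses only the relation $\omega_{p_a,q_a}\le\omega_{p_b,q_b}$ at the point together with the position of $n$, not membership in $D_s$ — exactly as in the paper's verification.

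The weakness is that the heart of the lemma — the sign check in the sub-cases where one of $a$, $b$ is the threshold index — is asserted rather than performed, and you yourself flag it as ``the whole difficulty.'' This is precisely where the paper spends its effort: it splits the range of $n$ into the four regimes between $\prod_{i=1}^{t-1}k_{\pi(i)}\prod_{i=t}^{d}k_{\pi(i)}^{2/q_{\pi(i)}}$ and $\prod_{i=1}^{t+1}k_{\pi(i)}\prod_{i=t+2}^{d}k_{\pi(i)}^{2/q_{\pi(i)}}$ (see \eqref{n_int1}--\eqref{n_int2}) and checks $E_1\ge E_2$, $E_3\ge E_2$, $E_3\ge E_4$ by observing that the relevant base of the form $n^{1/2}\prod k^{-1/2}\prod k^{-1/q}$ (or the single factor $k_{\pi(t)}$) is on the correct side of $1$ in each regime while the exponent is a nonnegative multiple of $\omega_{p_{\pi(t+1)},q_{\pi(t+1)}}-\omega_{p_{\pi(t)},q_{\pi(t)}}$. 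Until those sub-cases are written out, what you have is a correct and well-organized plan, not yet a proof.
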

\begin{proof}
First we check that if the polyhedra $D_s$ and $D_{s'}$ ($s \ne s'$) intersect along a $m-2$-dimensional face, then $\varphi_s|_{D_s} \ge \varphi_{s'}|_{D_s}$.

From the definition of the sets $D_s$ and generality of position we get that there is a permutation $\pi \in S_d$ and a number $t\in \{t_1, \, \dots, \, t_1+m-2\}$ such that, for $(1/p_1, \, \dots, \, 1/p_d)$ which belongs to relative interior of $m-2$-dimensional face $D_s\cap D_{s'}$,
$$
\omega _{p_{\pi(1)},q_{\pi(1)}} = \dots = \omega _{p_{\pi(\mu)},q_{\pi(\mu)}} = 0 < \omega _{p_{\pi(\mu+1)},q_{\pi(\mu+1)}}< \dots < \omega _{p_{\pi(t-1)},q_{\pi(t-1)}}<
$$
$$
< \omega _{p_{\pi(t)},q_{\pi(t)}} = \omega _{p_{\pi(t+1)},q_{\pi(t+1)}}< \omega _{p_{\pi(t+2)},q_{\pi(t+2)}}< \dots < \omega _{p_{\pi(\nu)},q_{\pi(\nu)}} < 
$$
$$
<1 = \omega _{p_{\pi(\nu+1)},q_{\pi(\nu+1)}}= \dots = \omega _{p_{\pi(d)},q_{\pi(d)}}
$$
holds; in the interior of $D_s$ the equality $\omega _{p_{\pi(t)},q_{\pi(t)}} = \omega _{p_{\pi(t+1)},q_{\pi(t+1)}}$ is replaced by $\omega _{p_{\pi(t)},q_{\pi(t)}} < \omega _{p_{\pi(t+1)},q_{\pi(t+1)}}$, and in the interior of $D_{s'}$, by $\omega _{p_{\pi(t)},q_{\pi(t)}} > \omega _{p_{\pi(t+1)},q_{\pi(t+1)}}$.

By \eqref{phi_case10}, \eqref{phi_case20} and \eqref{phi_case30}, for $n\le \prod _{i=1}^{t-1} k_{\pi(i)} \prod _{i=t}^d k_{\pi(i)} ^{2/q_{\pi(i)}}$ and for $n> \prod _{i=1}^{t+1} k_{\pi(i)} \prod _{i=t+2}^d k_{\pi(i)} ^{2/q_{\pi(i)}}$ the function $\Phi(\overline{p}, \, \overline{q}, \, \overline{k}, \, n)$ is given by the same formula for $1/\overline{p}\in D_s$ and for $1/\overline{p}\in D_{s'}$; therefore, $\varphi_s$ and $\varphi_{s'}$ are the same.

Let
\begin{align}
\label{n_int1}
\prod _{i=1}^{t-1} k_{\pi(i)} \prod _{i=t}^d k_{\pi(i)} ^{2/q_{\pi(i)}} < n \le \prod _{i=1}^{t-1} k_{\pi(i)} \cdot \min \{k_{\pi(t)} k_{\pi(t+1)}^{2/q_{\pi(t+1)}}, \, k_{\pi(t+1)} k_{\pi(t)}^{2/q_{\pi(t)}}\}\prod _{i=t+2}^d k_{\pi(i)} ^{2/q_{\pi(i)}}.
\end{align}
Then, for $1/\overline{p}\in D_s$, we have
\begin{align}
\label{phi_ds1} \Phi(\overline{p}, \, \overline{q}, \, \overline{k}, \, n) = \prod _{i=1}^{t-1} k_{\pi(i)} ^{1/q_{\pi(i)}-1/p_{\pi(i)}} \Bigl(n^{1/2}\prod _{i=1}^{t-1} k_{\pi(i)}^{-1/2} \prod _{i=t}^d k_{\pi(i)} ^{-1/q_{\pi(i)}}\Bigr) ^{\frac{1/q_{\pi(t)}-1/p_{\pi(t)}}{1/2 - 1/q_{\pi(t)}}}=:E_1,
\end{align}
and for $1/\overline{p}\in D_{s'}$,
\begin{align}
\label{phi_ds2} \Phi(\overline{p}, \, \overline{q}, \, \overline{k}, \, n) = \prod _{i=1}^{t-1} k_{\pi(i)} ^{1/q_{\pi(i)}-1/p_{\pi(i)}} \Bigl(n^{1/2}\prod _{i=1}^{t-1} k_{\pi(i)}^{-1/2} \prod _{i=t}^d k_{\pi(i)} ^{-1/q_{\pi(i)}}\Bigr) ^{\frac{1/q_{\pi(t+1)}-1/p_{\pi(t+1)}}{1/2 - 1/q_{\pi(t+1)}}}=:E_2.
\end{align}
Compare the values of the right-hand sides of \eqref{phi_ds1} and \eqref{phi_ds2} for $1/\overline{p}\in D_s$. By \eqref{n_int1} and the inequality $\omega _{p_{\pi(t)},q_{\pi(t)}} \le \omega _{p_{\pi(t+1)},q_{\pi(t+1)}}$, we get that $E_1\ge E_2$ on $D_s$. Hence, $\varphi_s|_{D_s}\ge \varphi _{s'}|_{D_s}$.

Let
$$
\prod _{i=1}^{t-1} k_{\pi(i)} \cdot k_{\pi(t)} k_{\pi(t+1)}^{2/q_{\pi(t+1)}} \prod _{i=t+2}^d k_{\pi(i)} ^{2/q_{\pi(i)}} < n \le \prod _{i=1}^{t-1} k_{\pi(i)} \cdot k_{\pi(t+1)} k_{\pi(t)}^{2/q_{\pi(t)}} \prod _{i=t+2}^d k_{\pi(i)} ^{2/q_{\pi(i)}}.
$$
Then, for $1/\overline{p}\in D_s$,
\begin{align}
\label{phi_ds3} \begin{array}{c}\Phi(\overline{p}, \, \overline{q}, \, \overline{k}, \, n) = \prod _{i=1}^{t-1} k_{\pi(i)} ^{1/q_{\pi(i)}-1/p_{\pi(i)}} \cdot k_{\pi(t)}^{1/q_{\pi(t)} - 1/p_{\pi(t)}}\times \\ \times \Bigl(n^{1/2}\prod _{i=1}^{t-1} k_{\pi(i)}^{-1/2} \cdot k_{\pi(t)}^{-1/2} k_{\pi(t+1)}^{-1/q_{\pi(t+1)}}\prod _{i=t+2}^d k_{\pi(i)} ^{-1/q_{\pi(i)}}\Bigr) ^{\frac{1/q_{\pi(t+1)}-1/p_{\pi(t+1)}}{1/2 - 1/q_{\pi(t+1)}}}=:E_3,\end{array}
\end{align}
and for $1/\overline{p}\in D_{s'}$ equality \eqref{phi_ds2} holds. 

Compare the values of the right-hand sides of \eqref{phi_ds2} and \eqref{phi_ds3} for $1/\overline{p}\in D_s$. Notice that
$$
E_3 = E_2\cdot k_{\pi(t)} ^{1/q_{\pi(t)} - 1/p_{\pi(t)}-\frac{1/2-1/q_{\pi(t)}}{1/2-1/q_{\pi(t+1)}}\cdot (1/q_{\pi(t+1)} - 1/p_{\pi(t+1)})}.
$$
By the inequality $\omega _{p_{\pi(t)},q_{\pi(t)}} \le \omega _{p_{\pi(t+1)},q_{\pi(t+1)}}$ on $D_s$, we have $E_3\ge E_2$; again we obtain that $\varphi_s|_{D_s}\ge \varphi _{s'}|_{D_s}$.

The case
$$
\prod _{i=1}^{t-1} k_{\pi(i)} \cdot k_{\pi(t+1)} k_{\pi(t)}^{2/q_{\pi(t)}} \prod _{i=t+2}^d k_{\pi(i)} ^{2/q_{\pi(i)}} < n \le \prod _{i=1}^{t-1} k_{\pi(i)} \cdot k_{\pi(t)} k_{\pi(t+1)}^{2/q_{\pi(t+1)}} \prod _{i=t+2}^d k_{\pi(i)} ^{2/q_{\pi(i)}}
$$
is similar to the previous one.

Let
\begin{align}
\label{n_int2} 
\prod _{i=1}^{t-1} k_{\pi(i)} \cdot \max \{k_{\pi(t)} k_{\pi(t+1)}^{2/q_{\pi(t+1)}}, \, k_{\pi(t+1)} k_{\pi(t)}^{2/q_{\pi(t)}}\} \prod _{i=t+2}^d k_{\pi(i)} ^{2/q_{\pi(i)}} < n \le \prod _{i=1}^{t+1} k_{\pi(i)} \prod _{i=t+2}^d k_{\pi(i)} ^{2/q_{\pi(i)}}.
\end{align}
Then, for $1/\overline{p}\in D_s$, we have \eqref{phi_ds3}, and for $1/\overline{p}\in D_{s'}$ we get
\begin{align}
\label{phi_ds4} \begin{array}{c}\Phi(\overline{p}, \, \overline{q}, \, \overline{k}, \, n) = \prod _{i=1}^{t-1} k_{\pi(i)} ^{1/q_{\pi(i)}-1/p_{\pi(i)}} \cdot k_{\pi(t+1)}^{1/q_{\pi(t+1)} - 1/p_{\pi(t+1)}}\times \\ \times \Bigl(n^{1/2}\prod _{i=1}^{t-1} k_{\pi(i)}^{-1/2} \cdot k_{\pi(t+1)}^{-1/2} k_{\pi(t)}^{-1/q_{\pi(t)}}\prod _{i=t+2}^d k_{\pi(i)} ^{-1/q_{\pi(i)}}\Bigr) ^{\frac{1/q_{\pi(t)}-1/p_{\pi(t)}}{1/2 - 1/q_{\pi(t)}}}=:E_4.\end{array}
\end{align}
Compare the right-hand sides of \eqref{phi_ds3} and \eqref{phi_ds4} for $1/\overline{p}\in D_s$. Notice that
$$
E_3 = \prod _{i=1}^{t+1} k_{\pi(i)} ^{1/q_{\pi(i)}-1/p_{\pi(i)}} \Bigl(n^{1/2}\prod _{i=1}^{t+1} k_{\pi(i)}^{-1/2} \prod _{i=t+2}^d k_{\pi(i)} ^{-1/q_{\pi(i)}}\Bigr) ^{\frac{1/q_{\pi(t+1)}-1/p_{\pi(t+1)}}{1/2 - 1/q_{\pi(t+1)}}},
$$
$$
E_4 = \prod _{i=1}^{t+1} k_{\pi(i)} ^{1/q_{\pi(i)}-1/p_{\pi(i)}} \Bigl(n^{1/2}\prod _{i=1}^{t+1} k_{\pi(i)}^{-1/2} \prod _{i=t+2}^d k_{\pi(i)} ^{-1/q_{\pi(i)}}\Bigr) ^{\frac{1/q_{\pi(t)}-1/p_{\pi(t)}}{1/2 - 1/q_{\pi(t)}}}.
$$
Taking into account \eqref{n_int2}, we get $E_3\ge E_4$ and $\varphi_s|_{D_s}\ge \varphi _{s'}|_{D_s}$.

Now we prove that $\varphi = \max _{1\le s\le s_*} \varphi_s$. Suppose that, for some $s\in \{1, \, \dots, \, s_*\}$, there is a point $\overline{a}\in D_s$ and a number $s'\in \{1, \, \dots, \, s_*\} \backslash \{s\}$ such that $\varphi_s(\overline{a})< \varphi_{s'}(\overline{a})$. We may assume that $\overline{a}$ lies in the interior of $D_s$. Let $\overline{b}$ be an interior point of $D_{s'}$. We set $[\overline{a}(u), \, \overline{b}(u)] = [\overline{a}, \, \overline{b}]\cap D_u$, $1\le u\le s_*$. Making a sufficiently small translation of the points $\overline{a}$ and $\overline{b}$, we may assume that, for all $u$ such that $[\overline{a}(u), \, \overline{b}(u)]\ne \varnothing$, the points $\overline{a}(u)$ and $\overline{b}(u)$ belong to relative interior of an $m-2$-dimensional face of $D_u$ (if $\overline{a}(u)\ne \overline{a}$, $\overline{b}(u) \ne \overline{b}$), and the segment $[\overline{a}, \, \overline{b}]$ intersects with each of these faces at a unique point. Thus, the segment $[\overline{a}, \, \overline{b}]$ is made up of the successive segments $[\overline{a}_j, \, \overline{b}_j] = [\overline{a}, \, \overline{b}] \cap D_{s(j)}$, $1\le j\le j_*$, $s(1)=s$, $s(j_*)=s'$, and for all $j=1, \, \dots, \, j_*-1$ the point $\overline{b}_j=\overline{a}_{j+1}$ belongs to relative interior of an $m-2$-dimensional face of $D_{s(j)}$ and $D_{s(j+1)}$. We obtained a piecewise affine continuous function $\varphi|_{[\overline{a}, \, \overline{b}]}$ such that $\varphi|_{[\overline{a}_j, \, \overline{b}_j]}= \varphi_{s(j)}|_{[\overline{a}_j, \, \overline{b}_j]}$, and $\varphi_{s(j)}|_{[\overline{a}_j, \, \overline{b}_j]} \ge \varphi_{s(j\pm 1)}|_{[\overline{a}_j, \, \overline{b}_j]}$. This implies that $\varphi_{s'}(\overline{a})\le \varphi_{s}(\overline{a})$. We arrived to a contradiction.
\end{proof}

We set for $1\le s\le s_*$
$$
\Lambda_s = \Bigl\{ (\lambda_2, \, \dots, \, \lambda_m)\in \R^{m-1}:\; \Bigl(1-\sum \limits _{j=2}^m \lambda_j\Bigr)/ \overline{p}_{\alpha_1} + \sum \limits _{j=2}^m \lambda_j/\overline{p}_{\alpha_j}\in D_s\Bigr\},
$$
$\Lambda = \cup _{s=1}^{s_*}\Lambda_s$,
\begin{align}
\label{psi_s_sm} \begin{array}{c}\psi_s(\lambda_2, \, \dots, \, \lambda_m) = \sum \limits _{j=2}^m \lambda_j \log \nu_{\alpha_j} + \Bigl( 1- \sum \limits _{j=2}^m \lambda_j\Bigr) \log \nu_{\alpha_1} + \varphi_s\Bigl( \Bigl(1-\sum \limits _{j=2}^m \lambda_j\Bigr)/ \overline{p}_{\alpha_1} + \sum \limits _{j=2}^m \lambda_j/\overline{p}_{\alpha_j}\Bigr), \\
\psi(\lambda_2, \, \dots, \, \lambda_m) =\max _{1\le s\le s_*} \psi_s(\lambda_2, \, \dots, \, \lambda_m), \quad (\lambda_2, \, \dots, \, \lambda_m) \in \Lambda. \end{array}
\end{align}
Then $(\lambda_2(\overline{\alpha}), \, \dots, \, \lambda_m(\overline{\alpha}))$ is the interior point of $\Lambda$; in addition, it is the vertex of $\Lambda_s$ for each $s\in \{1, \, \dots, \, s_*\}$ by generality of position.

\begin{Lem}
\label{str_min_lam} The point $(\lambda_2(\overline{\alpha}), \, \dots, \, \lambda_m(\overline{\alpha}))$ is a strict minimum point of the function $\psi$ on $\Lambda$.
\end{Lem}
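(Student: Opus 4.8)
The plan is to exploit that, by \eqref{psi_s_sm}, $\psi=\max_{1\le s\le s_*}\psi_s$ is a convex function on the convex polytope $\Lambda$, and that all the affine pieces $\psi_s$ take one and the same value at $\overline\lambda^{*}:=(\lambda_2(\overline\alpha),\dots,\lambda_m(\overline\alpha))$. Indeed, the affine map $\overline\lambda\mapsto\bigl(1-\sum_{j\ge 2}\lambda_j\bigr)/\overline p_{\alpha_1}+\sum_{j\ge 2}\lambda_j/\overline p_{\alpha_j}$ carries $\overline\lambda^{*}$ to $1/\overline\theta(\overline\alpha)\in\cap_{s=1}^{s_*}D_s$, on which $\varphi_s=\varphi$ for every $s$; hence, by \eqref{phi_s_d_s} and \eqref{psi_eq_nu_phi}, $\psi_s(\overline\lambda^{*})=\log\Psi$ for all $s$. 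Since the $D_s$ partition $D$ and $1/\overline\theta(\overline\alpha)$ lies in the relative interior of $D$, the polytopes $\Lambda_s$ tile a neighbourhood of $\overline\lambda^{*}$, and $\overline\lambda^{*}$ is a vertex of each $\Lambda_s$. These remarks reduce the lemma to the claim: \emph{for every $s$ the affine function $\psi_s$ attains a strict minimum over $\Lambda_s$ at the vertex $\overline\lambda^{*}$}. Indeed, granting this, any $\overline\lambda\ne\overline\lambda^{*}$ near $\overline\lambda^{*}$ lies in some $\Lambda_s$, so $\psi(\overline\lambda)\ge\psi_s(\overline\lambda)>\psi_s(\overline\lambda^{*})=\psi(\overline\lambda^{*})$; thus $\overline\lambda^{*}$ is a strict local, and therefore (by convexity) a strict global, minimum of $\psi$ on $\Lambda$.

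Fix $s$. We use the elementary fact that an affine function on a polytope which takes at a vertex a value strictly smaller than at every adjacent vertex attains there its unique (hence strict) global minimum over the polytope. Since $\overline\lambda^{*}$ is a simple vertex of $\Lambda_s$ by generality of position, it suffices to check that $\psi_s(\overline\lambda')>\psi_s(\overline\lambda^{*})=\log\Psi$ at each of the $m-1$ vertices $\overline\lambda'$ of $\Lambda_s$ adjacent to $\overline\lambda^{*}$. Near $\overline\lambda^{*}$, the polytope $\Lambda_s$ is cut out inside ${\rm aff}\,\Delta$ by exactly the $m-1$ consecutive inequalities among $\{\omega_{p_i,q_i}\}_{i\in I}$ in the order prescribed by $\pi_s$; the remaining constraints — the facets $\lambda_j=0$ of $\Delta$, the outer bounds $\omega_{p_{\sigma(t_1-1)},q_{\sigma(t_1-1)}}\le\omega_{p_i,q_i}\le\omega_{p_{\sigma(t_1+m)},q_{\sigma(t_1+m)}}$, and the type constraints $p_{\sigma(i)}=q_{\sigma(i)}$, $p_{\sigma(i)}=2$ — are strict at $\overline\lambda^{*}$ (by \eqref{sigma_choice}, \eqref{i_om_str} and $1/\overline\theta(\overline\alpha)\in\inter\Delta$). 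Hence along an edge of $\Lambda_s$ from $\overline\lambda^{*}$, all but one of the $m-1$ chain inequalities stay active — so $I$ splits into two consecutive sub-blocks with distinct $\omega$-levels — and at the adjacent vertex $\overline\lambda'$ exactly one previously-strict constraint becomes active.

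If that constraint is a facet $\lambda_j=0$, then $\overline\lambda'$ corresponds to the pair $\bigl((\alpha_l)_{l\ne j},\,Z'\bigr)$ with $Z'$ the two-sub-block set (of codimension $m-2$), so $Z'\in\tilde{\cal Z}_{m-1}$; otherwise $\overline\lambda'$ corresponds to $(\overline\alpha,\,Z')$ with $Z'$ the two-sub-block set enlarged by the new equation (of codimension $m-1$), so $Z'\in\tilde{\cal Z}_m$. In either case generality of position (assertion 1 of Definition \ref{gen_pos} for affine independence, assertion 2 for complementarity) shows that the affine hull of the relevant $1/\overline p_{\alpha_l}$'s and ${\rm aff}\,Z'$ are complementary; hence the weights $\lambda_l(\overline\alpha',Z')$ are well defined and equal the barycentric coordinates of $\overline\lambda'$, and so, by Lemma \ref{conv_phi} and \eqref{small_phi},
$$
\exp\psi_s(\overline\lambda')=\exp\psi(\overline\lambda')=\prod_l\nu_{\alpha'_l}^{\lambda_l(\overline\alpha',Z')}\cdot\Phi\bigl(\overline\theta(\overline\alpha',Z'),\overline q,\overline k,n\bigr)\ge\Psi,
$$
the last inequality because the right-hand product is one of the quantities over which the minimum \eqref{psi_def} is taken. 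As $\overline\lambda'\ne\overline\lambda^{*}$ gives a different member of that family, while by assertion 4 of Definition \ref{gen_pos} the minimum is attained at the single member corresponding to $\overline\lambda^{*}$, the inequality is in fact strict: $\psi_s(\overline\lambda')>\log\Psi=\psi_s(\overline\lambda^{*})$. This proves the claim, hence the lemma.

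I expect the bulk of the work to lie in the middle steps: describing precisely the local facet structure of $\Lambda_s$ at the simple vertex $\overline\lambda^{*}$, verifying that exactly one previously-inactive constraint turns active at each adjacent vertex (no degenerate vertices), and confirming in each resulting case that the set $Z'$ genuinely belongs to $\tilde{\cal Z}_{m-1}$ or $\tilde{\cal Z}_m$ (correct codimension, and $\#I_{\omega_t}\ge 2$ whenever required) and that the complementarity hypothesis of Definition \ref{nm_def1} holds — which is exactly where the several parts of the general-position assumption are used.
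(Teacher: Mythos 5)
Your proof is correct and follows essentially the same route as the paper's: reduce to showing that the affine function $\psi_s$ strictly increases along each edge of $\Lambda_s$ emanating from $(\lambda_2(\overline{\alpha}),\dots,\lambda_m(\overline{\alpha}))$, identify the far endpoint of each edge with an admissible pair $(\overline{\alpha}',Z')$ in the minimum \eqref{psi_def} via the general-position assumptions, and get strictness from assertion 4 of Definition \ref{gen_pos}. The only cosmetic difference is your appeal to convexity of $\psi$ to pass from a local to a global strict minimum, where the paper argues directly edge by edge on each $\Lambda_s$.
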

\begin{proof}
It suffices to prove that, for each $s=1, \, \dots, \, s_*$, the function $\psi$ strictly increases on each edge of the polyhedron $\Lambda_s$ going out of $(\lambda_2(\overline{\alpha}), \, \dots, \, \lambda_m(\overline{\alpha}))$; i.e., the strict minimum on this edge attains at $(\lambda_2(\overline{\alpha}), \, \dots, \, \lambda_m(\overline{\alpha}))$.

Let $(\mu_2, \, \dots, \, \mu_m)$ be the endpoint of one of such edges, and let the vector $\overline{\rho}=(\rho_1, \, \dots, \, \rho_d)$ be given by the equation
\begin{align}
\label{lem_rho_def}
1/\overline{\rho} = \Bigl(1-\sum \limits _{j=2}^m \mu_j\Bigr)/ \overline{p}_{\alpha_1} + \sum \limits _{j=2}^m \mu_j/\overline{p}_{\alpha_j}.
\end{align}

Let $I =\{i_1, \, \dots, \, i_m\}$. Since $1/\overline{\theta}(\overline{\alpha})$ lies in relative interior of $D$, from the definition of $D_s$ (see \eqref{dsdef}) it follows that there is $t\in \{2, \, \dots, \, m\}$ such that $1/\overline{\rho}$ lies in the plane $L_t$ given by the equations
\begin{align}
\label{lambda_t_def} \begin{array}{c} \omega'_{1/x_{i_1},q_{i_1}} =\omega'_{1/x_{i_2},q_{i_2}} = \dots = \omega'_{1/x_{i_{t-1}},q_{i_{t-1}}}, \\ \omega'_{1/x_{i_t},q_{i_t}}=\omega'_{1/x_{i_{t+1}},q_{i_{t+1}}} = \dots =\omega'_{1/x_{i_{m}},q_{i_{m}}}. \end{array}
\end{align}
Then $[1/\overline{\theta}(\overline{\alpha}), \, 1/\overline{\rho}] = D\cap L_t$. From the definition of $D$ it follows that for $\overline{x}=(x_1, \, \dots, \, x_d) \in [1/\overline{\theta}(\overline{\alpha}), \, 1/\overline{\rho}]$ we have $\omega'_{1/x_{i_l},q_{i_l}} \in [0, \, 1]$, $1\le l\le m$.

If $1/\overline{\rho}$ lies at the boundary of $\Delta$, then by generality of position (see assertion 2 of Definition \ref{gen_pos}) $1/\overline{\rho}$ belongs to relative interior of an $m-2$-dimensional face of $\Delta$. If $1/\overline{\rho}$ is the relative interior point of $\Delta$, then, besides equations \eqref{lambda_t_def}, an additional equation $\rho_j=2$, $\rho_j=q_j$ or $\omega_{\rho_j,q_j} = \omega _{\rho_{j'},q_{j'}}$ holds (see the definition of the set $D$). In both cases, by generality of position (see assertion 2 of Definition \ref{gen_pos}) we get $\overline{\rho}\in \Theta$; in particular, $\omega'_{\rho_{i_{t-1}},q_{i_{t-1}}}\in (0, \, 1)$ for $t\ge 3$ (if the additional equation is not $\rho_{i_1} = q_{i_1}$ or $\rho_{i_{t-1}} = 2$), $\omega'_{\rho_{i_t},q_{i_t}}\in (0, \, 1)$ for $t\le m-1$ (if the additional equation is not $\rho_{i_t} = q_{i_t}$ or $\rho_{i_m} = 2$), and if the additional equation to \eqref{lambda_t_def} is $\omega_{\rho_j,q_j} = \omega _{\rho_{j'},q_{j'}}$, then $\omega_{\rho_j,q_j}\in (0, \, 1)$. From \eqref{psi_def}, \eqref{psi_eq_nu_phi} and assertion 4 of Definition \ref{gen_pos} it follows that
$$
\nu_{\alpha_1}^{1-\sum \limits _{j=2}^m \lambda_j(\overline{\alpha})} \nu_{\alpha_2}^{\lambda_2(\overline{\alpha})} \dots \nu_{\alpha_m}^{\lambda_m(\overline{\alpha})} \Phi (\overline{\theta}(\overline{\alpha}), \, \overline{q}, \, \overline{k}, \, n) < \nu_{\alpha_1}^{1-\sum \limits _{j=2}^m \mu_j} \nu_{\alpha_2}^{\mu_2} \dots \nu_{\alpha_m}^{\mu_m} \Phi (\overline{\rho}, \, \overline{q}, \, \overline{k}, \, n).
$$
Taking the logarithm of both parts in this inequality and applying \eqref{small_phi}, \eqref{psi_s_sm}, \eqref{lem_rho_def} and Lemma \ref{conv_phi}, we get that $\psi(\lambda_2(\overline{\alpha}), \, \dots, \, \lambda_m(\overline{\alpha})) < \psi(\mu_2, \, \dots, \, \mu_m)$.
\end{proof}

Since $1/\overline{\theta}(\overline{\alpha})\in \cap _{s=1}^{s_*} D_s$, $\varphi|_{D_s} = \varphi_s|_{D_s}$, we have by \eqref{psi_s_sm}
\begin{align}
\label{psi_j_eq_th} \psi_1(\lambda_2(\overline{\alpha}), \, \dots, \, \lambda_m(\overline{\alpha})) = \dots = \psi_{s_*}(\lambda_2(\overline{\alpha}), \, \dots, \, \lambda_m(\overline{\alpha})).
\end{align}

Applying Dubovitskii--Milyutin's theorem and taking into account that $(\lambda_2(\overline{\alpha}), \, \dots, \, \lambda_m(\overline{\alpha}))\in {\rm int}\, \Lambda$ is the strict minimum point of the function $\psi$ on the set $\Lambda$, we get
\begin{align}
\label{dub_mil}
0\in {\rm int}\, {\rm conv}\, \Bigl(\cup _{s=1}^{s_*} \{\nabla\psi_s\}\Bigr).
\end{align}
By Carath\'{e}odory's theorem, there is a set $S\subset \{1, \, \dots, \, s_*\}$ such that $\# S\le m$, the points $\{\nabla \psi_s\}_{s\in S}$ are affinely independent and 
\begin{align}
\label{och}
0\in {\rm conv}\, \Bigl(\cup _{s\in S} \{\nabla\psi_s\}\Bigr).
\end{align}
By \eqref{dub_mil}, the system $\{\nabla \psi_s\}_{s\in S}$ can be complemented to an affinely independent system of points $\{\nabla \psi_s\}_{s\in S\sqcup S'}$, where $S' \subset \{1, \, \dots, \, s_*\}$ and $\# S + \# S' = m$.

We set $\{s(1), \, \dots, \, s(m)\} = S\sqcup S'$, 
\begin{align}
\label{f_max_def}
f= \max _{1\le t\le m} \psi_{s(t)}.
\end{align}
By \eqref{psi_j_eq_th}, \eqref{och} and Dubovitskii--Milyutin's theorem,
\begin{align}
\label{och_min} f(\lambda_2(\overline{\alpha}), \, \dots, \, \lambda_m(\overline{\alpha})) = \min _\Lambda f.
\end{align}
By the construction, $\{\nabla \psi_{s(t)}\}_{t=1}^m$ are affinely independent; hence $\{\nabla \psi_{s(t)}-\nabla \psi_{s(1)}\}_{t=2}^m$ are linearly independent.

By \eqref{phi_s_d_s}, \eqref{psi_s_sm}, we have
$$
\nabla\psi_{s(t)} = \Bigl(\log \nu_{\alpha_j}-\log \nu_{\alpha_1}-\sum \limits_{i\in T_+^0}(1/p_{\alpha_j,i}-1/p_{\alpha_1,i})\log k_i -\sum \limits _{i\in I}(1/p_{\alpha_j,i}-1/p_{\alpha_1,i})\log A_i(s(t))\Bigr)_{j=2}^m.
$$
Thus, we proved
\begin{Sta}
\label{a_vec_prop} The system of vectors $\{\overline{v}_t\}_{t=2}^m$, where
$$
\overline{v}_t= \Bigl(\sum \limits _{i\in I}(1/p_{\alpha_j,i}-1/p_{\alpha_1,i})(\log A_i(s(t))-\log A_i(s(1)))\Bigr)_{j=2}^m,
$$
is linearly independent.
\end{Sta}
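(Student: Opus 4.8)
The plan is to read the claim off from the affine independence of the gradients that was engineered in the Dubovitskii--Milyutin and Carath\'eodory steps above. Recall that the index set $\{s(1), \, \dots, \, s(m)\} = S\sqcup S'$ was chosen precisely so that the gradients $\{\nabla\psi_{s(t)}\}_{t=1}^m$ of the affine functions $\psi_{s(t)}$ form an affinely independent family in $\R^{m-1}$: this is the content of \eqref{och} for the indices in $S$, together with the complementation by $S'$ to a maximal affinely independent system described right after \eqref{dub_mil}.

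First I would use the standard fact that affine independence of $\{\nabla\psi_{s(t)}\}_{t=1}^m$ is equivalent to linear independence of the $m-1$ difference vectors $\nabla\psi_{s(t)}-\nabla\psi_{s(1)}$, $t=2, \, \dots, \, m$, in $\R^{m-1}$. So it is enough to identify these differences, up to an overall sign, with the vectors $\overline{v}_t$.

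Next I would substitute the explicit expression for $\nabla\psi_{s(t)}$ obtained above from \eqref{phi_s_d_s} and \eqref{psi_s_sm}, whose $j$-th coordinate ($2\le j\le m$) equals
$$
\log\nu_{\alpha_j}-\log\nu_{\alpha_1}-\sum_{i\in T_+^0}(1/p_{\alpha_j,i}-1/p_{\alpha_1,i})\log k_i-\sum_{i\in I}(1/p_{\alpha_j,i}-1/p_{\alpha_1,i})\log A_i(s(t)).
$$
Subtracting the vector for $t=1$, the terms $\log\nu_{\alpha_j}-\log\nu_{\alpha_1}$ and $\sum_{i\in T_+^0}(1/p_{\alpha_j,i}-1/p_{\alpha_1,i})\log k_i$ do not depend on $t$ and cancel, leaving exactly
$$
\nabla\psi_{s(t)}-\nabla\psi_{s(1)}=-\Bigl(\sum_{i\in I}(1/p_{\alpha_j,i}-1/p_{\alpha_1,i})(\log A_i(s(t))-\log A_i(s(1)))\Bigr)_{j=2}^m=-\overline{v}_t.
$$
Hence $\{\overline{v}_t\}_{t=2}^m$ is linearly independent, which is the assertion.

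Essentially all the work is already behind us: the facts $\varphi=\max_s\varphi_s$ (Lemma \ref{conv_phi}), that $(\lambda_2(\overline{\alpha}), \, \dots, \, \lambda_m(\overline{\alpha}))$ is a strict interior minimum of $\psi$ on $\Lambda$ (Lemma \ref{str_min_lam}), and the Dubovitskii--Milyutin/Carath\'eodory extraction of an affinely independent set of gradients carry the real weight. The only point that needs a moment's care here is the bookkeeping in the subtraction $\nabla\psi_{s(t)}-\nabla\psi_{s(1)}$: namely that $T_+^0$ is a fixed set independent of $s$ and that the quantities $A_i(s)$ for $i\in T_+^0$ never enter $\psi_s$, so that the only terms surviving the subtraction are those indexed by $i\in I$. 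Once this is observed, the identification with $-\overline{v}_t$, and hence the linear independence, is immediate; so the present proposition is not really an obstacle in itself — the genuine difficulty lay in the earlier two lemmas.
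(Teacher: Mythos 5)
Your proposal is correct and follows exactly the paper's own argument: the paper likewise derives linear independence of $\{\nabla\psi_{s(t)}-\nabla\psi_{s(1)}\}_{t=2}^m$ from the affine independence of the gradients secured by the Dubovitskii--Milyutin/Carath\'eodory construction, writes out $\nabla\psi_{s(t)}$ from \eqref{phi_s_d_s} and \eqref{psi_s_sm}, and identifies the difference with $-\overline{v}_t$ after the $t$-independent terms cancel. Nothing is missing.
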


\begin{Cor}
\label{a_matr_prop} The matrix $\Bigl(\log A_i(s(t))-\log A_i(s(1))\Bigr)_{2\le t\le m, \, i\in I}$ has range $m-1$.
\end{Cor}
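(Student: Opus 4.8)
The plan is to read the Corollary off from Proposition \ref{a_vec_prop} by pure linear algebra, with no further computation. Write $C = \bigl(\log A_i(s(t)) - \log A_i(s(1))\bigr)_{2\le t\le m,\, i\in I}$ for the matrix in the statement; by \eqref{ais_def} together with the interval inclusion \eqref{n_interval} the quantities $A_i(s(t))$ are well-defined positive reals for all $i\in I$ and all $1\le t\le m$, so the matrix and its entries make sense. It has $m-1$ rows (indexed by $t\in\{2,\dots,m\}$) and, since $\# I=m$, exactly $m$ columns. Denote by $c_t\in\R^m$ the $t$-th row of $C$, and by $P=\bigl(1/p_{\alpha_j,i}-1/p_{\alpha_1,i}\bigr)_{2\le j\le m,\, i\in I}$ the $(m-1)\times m$ matrix assembled from the points $1/\overline p_{\alpha_j}$. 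The key observation is that the vector $\overline v_t$ of Proposition \ref{a_vec_prop} is precisely $P c_t$: its $j$-th coordinate is $\sum_{i\in I}(1/p_{\alpha_j,i}-1/p_{\alpha_1,i})\,(c_t)_i$, which matches the formula defining $\overline v_t$.

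With this identification the argument is immediate. Suppose, for contradiction, that the rows $c_2,\dots,c_m$ of $C$ were linearly dependent, say $\sum_{t=2}^m\lambda_t c_t=0$ with the $\lambda_t$ not all zero. Applying the linear map $P$ gives $\sum_{t=2}^m\lambda_t\,\overline v_t = P\bigl(\sum_{t=2}^m\lambda_t c_t\bigr)=0$, contradicting the linear independence of $\{\overline v_t\}_{t=2}^m$ established in Proposition \ref{a_vec_prop}. Hence $c_2,\dots,c_m$ are linearly independent; being $m-1$ vectors in $\R^m$, they span an $(m-1)$-dimensional subspace, i.e. the row rank of $C$ equals $m-1$. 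Since $C$ has only $m-1$ rows, this is also the maximal possible value, so the range of $C$ is exactly $m-1$.

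I do not expect a genuine obstacle here: all the substantive work has already been done in Proposition \ref{a_vec_prop}, and this Corollary merely repackages it in the form actually used later, namely that the matrix ${\cal B}=(B_{j,i})$ produced from the $A_i(s(t))$ via \eqref{b_mm_i} meets the range condition in the definition of $\hat {\cal M}_{m,I}$. The only points that call for a little care are purely bookkeeping: matching the index set $I$ of cardinality $m$ with the $m$ columns of $C$; checking via \eqref{ais_def} and \eqref{n_interval} that no $A_i(s(t))$ degenerates to $0$ or $\infty$, so that every logarithm is legitimate; and noting explicitly that the implication invoked — linear dependence of the $c_t$ forces linear dependence of the $Pc_t$ — holds for an arbitrary matrix $P$, with no rank hypothesis on $P$ needed.
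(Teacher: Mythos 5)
Your proof is correct and is exactly the argument the paper intends: the Corollary is stated without proof as an immediate consequence of Proposition \ref{a_vec_prop}, and your observation that $\overline{v}_t = P c_t$, so linear dependence of the rows $c_t$ would force linear dependence of the $\overline{v}_t$, is precisely the implicit reasoning. Nothing is missing.
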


Let
\begin{align}
\label{a_ti_def} A_{t,i} = A_i(s(t)), \quad i\in I.
\end{align}
From \eqref{ais_def}, \eqref{n_interval} it follows that
\begin{align}
\label{aij_cond}
1\le A_{j,\sigma(i)}\le k_{\sigma(i)}, \quad t_1\le i\le t_1+m-1.
\end{align}

We set
\begin{align}
\label{s_def4} s_{\sigma(i)} = \prod _{j=1}^m A_{j,\sigma(i)} ^{\tau_j}, \quad t_1\le i\le t_1+m-1,
\end{align}
where $\tau_j$ are defined by the equations 
\begin{align}
\label{tau_def_om} 
\begin{array}{c}
\prod _{i=1}^{t_1-1} k_{\sigma(i)} ^{1/p_{\alpha_2,\sigma(i)} - 1/p_{\alpha_1,\sigma(i)}} \prod _{i=t_1}^{t_1+m-1} s_{\sigma(i)} ^{1/p_{\alpha_2,\sigma(i)} - 1/p_{\alpha_1,\sigma(i)}} = \frac{\nu_{\alpha_2}}{\nu_{\alpha_1}}, \\ \dots \\
\prod _{i=1}^{t_1-1} k_{\sigma(i)} ^{1/p_{\alpha_m,\sigma(i)} - 1/p_{\alpha_1,\sigma(i)}} \prod _{i=t_1}^{t_1+m-1} s_{\sigma(i)} ^{1/p_{\alpha_m,\sigma(i)} - 1/p_{\alpha_1,\sigma(i)}} = \frac{\nu_{\alpha_m}}{\nu_{\alpha_1}}, \\
\sum \limits _{j=1}^m \tau_j = 1
\end{array}
\end{align}
(see \eqref{s_def2}, \eqref{si_def_ini}, \eqref{n_gr}).

Substituting \eqref{s_def4}, excluding $\tau_1$, taking the logarithms, and applying Proposition \ref{a_vec_prop}, we find that the system has a unique solution.

We show that $0\le \tau_j \le 1$; then, by \eqref{aij_cond}, \eqref{s_def4} and the equality $\sum \limits _{j=1}^m \tau_j = 1$, we have $1\le s_{\sigma(i)}\le k_{\sigma(i)}$, $t_1\le i\le t_1+m-1$.

Let
$$
\tilde \Lambda_j=\{(\lambda_2, \, \dots, \, \lambda_m)\in \Lambda:\;  f(\lambda_2, \, \dots, \, \lambda_m) = \psi _{s(j)}(\lambda_2, \, \dots, \, \lambda_m)\}, \; 1\le j\le m,
$$
where the function $f$ is given by formula \eqref{f_max_def}. Then from \eqref{psi_j_eq_th} it follows that $$\overline{\lambda}_*:= (\lambda_2 (\overline{\alpha}), \, \dots, \, \lambda_m (\overline{\alpha})) \in \cap _{j=1}^m \tilde \Lambda_j.$$
The sets $\tilde \Lambda_j$ are given by the system of inequalities
$$
\psi_{s(j)}(\lambda_2, \, \dots, \, \lambda_m)\ge \psi_{s(t)}(\lambda_2, \, \dots, \, \lambda_m), \; t\in \{1, \, \dots, \, m\} \backslash\{j\}.
$$
By affine independence of $\{\nabla \psi_{s(t)}\}_{t=1}^m$, we get that this system of inequalities has the nondegenerate matrix; hence $\overline{\lambda}_*$ is the vertex of $\tilde \Lambda_j$.

Let $1\le l\le m$. We show that $0\le \tau_l\le 1$. We take some $j_*\ne l$ and consider the edge $R_l$ of the polyhedron $\tilde \Lambda_{j_*}$ defined by the conditions
$$
\psi_{s(1)}(\lambda_2, \, \dots, \, \lambda_m) =\dots = \psi _{s(l-1)}(\lambda_2, \, \dots, \, \lambda_m) = \psi _{s(l+1)}(\lambda_2, \, \dots, \, \lambda_m) =\dots =$$$$= \psi _{s(m)}(\lambda_2, \, \dots, \, \lambda_m).
$$
Then
\begin{align}
\label{l_st_rl} \overline{\lambda}_*\in R_l, \quad f|_{R_l} = \psi _{s(t)}|_{R_l} \; \forall t\ne l.
\end{align}
We draw a straight line through $R_l$ and intersect it with $\Lambda$. We get the segment $[\overline{\mu}_{(1)}, \, \overline{\mu}_{(2)}]$, $\overline{\mu}_{(k)} = (\mu_{(k),2}, \, \dots, \, \mu_{(k),m})$, $k=1,\, 2$; $\overline{\mu}_{(1)}$ is the end of $R_l$, $\overline{\lambda}_*$ lies in the interior of $[\overline{\mu}_{(1)}, \, \overline{\mu}_{(2)}]$.

We have 
\begin{align}
\label{flm2}
f|_{[\overline{\lambda}_*, \, \overline{\mu}_{(2)}]} = \psi_{s(l)}|_{[\overline{\lambda}_*, \, \overline{\mu}_{(2)}]}.
\end{align}
Indeed, otherwise $f|_{[\overline{\lambda}_*, \, \overline{\mu}_{(2)}]} = \psi_{s(t)}|_{[\overline{\lambda}_*, \, \overline{\mu}_{(2)}]}$ for some $t\ne l$. Then by \eqref{l_st_rl} we get $f|_{[\overline{\mu}_{(1)}, \, \overline{\mu}_{(2)}]} = \psi _{s(t)}|_{[\overline{\mu}_{(1)}, \, \overline{\mu}_{(2)}]}$; i.e., $\overline{\lambda}_*$ is not an extremal point of $\tilde \Lambda_t$ and we arrive to a contradiction.

We define the points $1/\overline{\theta}_{(k)}$ and the numbers $\nu_{(k)}$, $k=1, \, 2$, by equations
$$
1/\overline{\theta}_{(k)} =\Bigl(1 -\sum \limits _{j=2} ^m \mu_{(k),j}\Bigr)/\overline{p}_{\alpha_1} + \sum \limits _{j=2} ^m \mu_{(k),j}/\overline{p}_{\alpha_j},
$$
$$
\nu_{(k)} = \nu_{\alpha_1}^{1 -\sum \limits _{j=2} ^m \mu_{(k),j}} \nu_{\alpha_2} ^{\mu_{(k),2}} \dots \nu_{\alpha_m}^{\mu_{(k),m}}.
$$
Then, for some $\lambda\in (0, \, 1)$, we have $\frac{1}{\overline{\theta}(\overline{\alpha})} = \frac{1-\lambda}{\overline{\theta}_{(1)}} + \frac{\lambda}{\overline{\theta}_{(2)}}$. Hence, by the affine independence of $\{1/\overline{p}_{\alpha_j}\}_{j=1}^m$, the equality $\nu_1 ^{\lambda_1(\overline{\alpha})} \dots \nu_m ^{\lambda_m(\overline{\alpha})} = \nu_{(1)} ^{1-\lambda}\nu_{(2)}^\lambda$ holds. 

Let $j_*\in \{1, \, \dots, \, m\}\backslash \{l\}$. From the inequalities $f(\overline{\lambda}_*) \stackrel{\eqref{och_min}}{\le} f(\overline{\mu}_{(k)})$, $k=1, \, 2$, the equalities $f(\overline{\lambda}_*)=\psi_{s(j_*)}(\overline{\lambda}_*)= \psi_{s(l)}(\overline{\lambda}_*)$, $f(\overline{\mu}_{(1)}) \stackrel{\eqref{l_st_rl}}{=} \psi _{s(j_*)}(\overline{\mu}_{(1)})$, $f(\overline{\mu}_{(2)}) \stackrel{\eqref{flm2}}{=} \psi _{s(l)}(\overline{\mu}_{(2)})$ and \eqref{t0_pm_def}, \eqref{phi_s_d_s}, \eqref{psi_s_sm} we get
$$
\nu_{(1)}^{1-\lambda} \nu_{(2)}^\lambda \prod _{i=1}^{t_1-1} k_{\sigma(i)} ^{1/q_{\sigma(i)} - 1/\theta_{\sigma(i)} (\overline{\alpha})} \prod _{i=t_1} ^{t_1+m-1} A_{j_*,\sigma(i)}^{1/q_{\sigma(i)} - 1/\theta_{\sigma(i)}(\overline{\alpha})} \le
$$
$$
\le \nu_{(1)} \prod _{i=1}^{t_1-1} k_{\sigma(i)} ^{1/q_{\sigma(i)} - 1/\theta_{(1),\sigma(i)}}\prod _{i=t_1} ^{t_1+m-1} A_{j_*,\sigma(i)}^{1/q_{\sigma(i)} - 1/\theta_{(1),\sigma(i)}},
$$
$$
\nu_{(1)}^{1-\lambda} \nu_{(2)}^\lambda \prod _{i=1}^{t_1-1} k_{\sigma(i)} ^{1/q_{\sigma(i)} - 1/\theta_{\sigma(i)} (\overline{\alpha})} \prod _{i=t_1} ^{t_1+m-1} A_{l,\sigma(i)}^{1/q_{\sigma(i)} - 1/\theta_{\sigma(i)}(\overline{\alpha})} \le
$$
$$
\le \nu_{(2)} \prod _{i=1}^{t_1-1} k_{\sigma(i)} ^{1/q_{\sigma(i)} - 1/\theta_{(2),\sigma(i)}} \prod _{i=t_1} ^{t_1+m-1} A_{l, \sigma(i)} ^{1/q_{\sigma(i)} - 1/\theta_{(2),\sigma(i)}}.
$$
This is equivalent to
\begin{align}
\label{m1_ineq}
\frac{\nu_{(1)}}{\nu_{(2)}} \ge \prod _{i=1}^{t_1-1} k_{\sigma(i)} ^{1/\theta_{(1),\sigma(i)} -1/\theta_{(2), \sigma(i)}} \prod _{i=t_1}^{t_1+m-1} A_{j_*,\sigma(i)}^{1/\theta_{(1),\sigma(i)} - 1/\theta_{(2),\sigma(i)}}=:M_1,
\end{align}
\begin{align}
\label{m2_ineq}
\frac{\nu_{(1)}}{\nu_{(2)}} \le \prod _{i=1}^{t_1-1} k_{\sigma(i)} ^{1/\theta_{(1),\sigma(i)} -1/\theta_{(2), \sigma(i)}} \prod _{i=t_1}^{t_1+m-1} A_{l, \sigma(i)} ^{1/\theta_{(1),\sigma(i)} - 1/\theta_{(2),\sigma(i)}}=:M_2.
\end{align}
If $M_1=M_2$, then the inequalities turn to equalities; hence $f|_{[\overline{\mu}_{(1)}, \, \overline{\mu}_{(2)}]}={\rm const}$; in particular, $f|_{[\overline{\mu}_{(1)}, \, \overline{\mu}_{(2)}]}=\psi _{s(l)}|_{[\overline{\mu}_{(1)}, \, \overline{\mu}_{(2)}]}$, and $\overline{\lambda}_*$ is not an extremal point of $\tilde \Lambda_l$.

Thus, $M_1\ne M_2$.

Further, from \eqref{l_st_rl} it follows that, for each $j\ne l$,
$$
\nu_{(1)}^{1-\lambda} \nu_{(2)}^\lambda \prod _{i=1}^{t_1-1} k_{\sigma(i)} ^{1/q_{\sigma(i)} - 1/\theta_{\sigma(i)} (\overline{\alpha})} \prod _{i=t_1} ^{t_1+m-1} A_{j,\sigma(i)}^{1/q_{\sigma(i)} - 1/\theta_{\sigma(i)}(\overline{\alpha})}=
$$
$$
=\nu_{(1)}^{1-\lambda} \nu_{(2)}^\lambda \prod _{i=1}^{t_1-1} k_{\sigma(i)} ^{1/q_{\sigma(i)} - 1/\theta_{\sigma(i)} (\overline{\alpha})} \prod _{i=t_1} ^{t_1+m-1} A_{j_*,\sigma(i)}^{1/q_{\sigma(i)} - 1/\theta_{\sigma(i)}(\overline{\alpha})},
$$
$$
\nu_{(1)} \prod _{i=1}^{t_1-1} k_{\sigma(i)} ^{1/q_{\sigma(i)} - 1/\theta_{(1),\sigma(i)}}\prod _{i=t_1} ^{t_1+m-1} A_{j,\sigma(i)}^{1/q_{\sigma(i)} - 1/\theta_{(1),\sigma(i)}}=
$$
$$
=\nu_{(1)} \prod _{i=1}^{t_1-1} k_{\sigma(i)} ^{1/q_{\sigma(i)} - 1/\theta_{(1),\sigma(i)}}\prod _{i=t_1} ^{t_1+m-1} A_{j_*,\sigma(i)}^{1/q_{\sigma(i)} - 1/\theta_{(1),\sigma(i)}};
$$
this implies that
$$
\prod _{i=t_1} ^{t_1+m-1} A_{j,\sigma(i)}^{1/q_{\sigma(i)} - 1/\theta_{\sigma(i)}(\overline{\alpha})} = 
\prod _{i=t_1} ^{t_1+m-1} A_{j_*,\sigma(i)}^{1/q_{\sigma(i)} - 1/\theta_{\sigma(i)}(\overline{\alpha})},
$$
$$
\prod _{i=t_1} ^{t_1+m-1} A_{j,\sigma(i)}^{1/q_{\sigma(i)} - 1/\theta_{(1),\sigma(i)}}= \prod _{i=t_1} ^{t_1+m-1} A_{j_*,\sigma(i)}^{1/q_{\sigma(i)} - 1/\theta_{(1),\sigma(i)}}.
$$
Dividing one equality by the other, we obtain
$$
\prod _{i=t_1} ^{t_1+m-1} A_{j,\sigma(i)}^{1/\theta_{(1),\sigma(i)} - 1/\theta_{\sigma(i)}(\overline{\alpha})} = 
\prod _{i=t_1} ^{t_1+m-1} A_{j_*,\sigma(i)}^{1/\theta_{(1),\sigma(i)} - 1/\theta_{\sigma(i)}(\overline{\alpha})};
$$
since $1/\overline{\theta}(\overline{\alpha}) \in [1/\overline{\theta}_{(1)}, \, 1/\overline{\theta}_{(2)}]$, this yields the equality
\begin{align}
\label{aj_prod} \prod _{i=t_1} ^{t_1+m-1} A_{j,\sigma(i)}^{1/\theta_{(1),\sigma(i)} - 1/\theta_{(2),\sigma(i)}} = 
\prod _{i=t_1} ^{t_1+m-1} A_{j_*,\sigma(i)}^{1/\theta_{(1),\sigma(i)} - 1/\theta_{(2),\sigma(i)}}, \quad j\ne l.
\end{align}

Applying \eqref{nu1_nu2}, \eqref{tau_def_om}, we get
$$
\frac{\nu_{(1)}}{\nu_{(2)}} = \prod _{i=1}^d s_{\sigma(i)}^{1/\theta_{(1),\sigma(i)} - 1/\theta_{(2),\sigma(i)}} \stackrel{\eqref{s_def2}, \eqref{n_gr}, \eqref{s_def4}}{=}$$$$=\prod _{i=1}^{t_1-1} k_{\sigma(i)} ^{1/\theta_{(1),\sigma(i)} -1/\theta_{(2), \sigma(i)}} \prod _{i=t_1}^{t_1+m-1}\prod _{1\le j\le m} A_{j,\sigma(i)} ^{\tau_j (1/\theta_{(1),\sigma(i)} - 1/\theta_{(2),\sigma(i)})} \stackrel{\eqref{aj_prod}}{=}
$$
$$
= \prod _{i=1}^{t_1-1} k_{\sigma(i)} ^{1/\theta_{(1),\sigma(i)} -1/\theta_{(2), \sigma(i)}}  \prod _{i=t_1}^{t_1+m-1}(A_{j_*,\sigma(i)} ^{(1-\tau_l) (1/\theta_{(1),\sigma(i)} - 1/\theta_{(2),\sigma(i)})} A_{l,\sigma(i)} ^{\tau_l (1/\theta_{(1),\sigma(i)} - 1/\theta_{(2),\sigma(i)})}) = $$$$=M_1^{1-\tau_l} M_2^{\tau_l}.
$$
This together with \eqref{m1_ineq}, \eqref{m2_ineq} yields that $M_1\le M_1^{1-\tau_l} M_2^{\tau_l}\le M_2$. Since $M_1\ne M_2$, we get $0\le \tau_l\le 1$.

{\bf Comparison of $n$ and $k_1^{\frac{2}{q_1}}\dots k_d ^{\frac{2}{q_d}}s_1^{1-\frac{2}{q_1}} \dots s_d ^{1- \frac{2}{q_d}}$.}

In what follows we denote by $\overline{2}\in \R^d$ the vector whose each coordinate is equal to 2, and set
$$\Delta_I=\{\overline{x}_I:\; \overline{x}\in \Delta\}$$
(recall that the notation $\overline{x}_I$ was before Definition \ref{gen_pos}).

In the case $I=I_\omega$ we have
\begin{align}
\label{i_om_n} n= k_1^{\frac{2}{q_1}}\dots k_d ^{\frac{2}{q_d}}s_1^{1-\frac{2}{q_1}} \dots s_d ^{1- \frac{2}{q_d}}.
\end{align}
Indeed, from \eqref{s_def2}, \eqref{n_gr}, \eqref{s_def4} we get
$$
k_1^{\frac{2}{q_1}}\dots k_d ^{\frac{2}{q_d}}s_1^{1-\frac{2}{q_1}} \dots s_d ^{1- \frac{2}{q_d}} = \prod _{i=1}^{t_1-1} k_{\sigma(i)} \prod _{i=t_1}^d k_{\sigma(i)}^{2/q_{\sigma(i)}} \prod _{i=t_1} ^{t_1+m-1} \prod _{j=1}^m A_{j,\sigma(i)}^{\tau_j\Bigl(1-\frac{2}{q_{\sigma(i)}}\Bigr)} =:E.
$$
From \eqref{ais_def} it follows that, for each $j\in \{1, \, \dots, \, m\}$,
$$
\prod _{i=t_1} ^{t_1+m-1} A_{j,\sigma(i)}^{1-\frac{2}{q_{\sigma(i)}}} = \prod _{\sigma(i)\in I_+(s(j))} k_{\sigma(i)}^{1-\frac{2}{q_{\sigma(i)}}}  \cdot n \prod _{i=1}^{t_1-1} k_{\sigma(i)}^{-1}\prod _{\sigma(i)\in I_+(s(j))} k_{\sigma(i)}^{-1} \times$$$$\times \prod _{\sigma(i)\in \{i_*(s(j))\}\sqcup I_-(s(j))} k_{\sigma(i)}^{-2/q_{\sigma(i)}} \prod _{i=t_1+m}^d k_{\sigma(i)}^{-2/q_{\sigma(i)}} \stackrel{\eqref{i_razb_s}}{=}
n \prod _{i=1}^{t_1-1} k_{\sigma(i)}^{-1} \prod _{i=t_1}^d k_{\sigma(i)}^{-2/q_{\sigma(i)}}.
$$
Hence,
$$
E = \prod _{i=1}^{t_1-1} k_{\sigma(i)} \prod _{i=t_1}^d k_{\sigma(i)}^{2/q_{\sigma(i)}} \cdot n \prod _{i=1}^{t_1-1} k_{\sigma(i)}^{-1} \prod _{i=t_1}^d k_{\sigma(i)}^{-2/q_{\sigma(i)}} = n.
$$
This completes the proof of \eqref{i_om_n}.

Notice that we obtained the equality
\begin{align}
\label{pr_aj_12q} \prod _{i=t_1} ^{t_1+m-1} A_{j,\sigma(i)}^{1-\frac{2}{q_{\sigma(i)}}} = n \prod _{i=1}^{t_1-1} k_{\sigma(i)}^{-1} \prod _{i=t_1}^d k_{\sigma(i)}^{-2/q_{\sigma(i)}}.
\end{align}

Let $I=I_q$. We show that
\begin{align}
\label{iq_ineq1} n \le k_1^{\frac{2}{q_1}}\dots k_d ^{\frac{2}{q_d}}s_1^{1-\frac{2}{q_1}} \dots s_d ^{1- \frac{2}{q_d}}.
\end{align}

Recall that in the case $I=I_q$ we have $$n\le \prod _{i=1}^{\mu_2} k_{\sigma(i)} \prod _{i=\mu_2+1}^d k_{\sigma(i)} ^{2/q_{\sigma(i)}}$$
(see Proposition \ref{iq_sta}).

If $n\le \prod _{i=1}^{\mu_1} k_{\sigma(i)} \prod _{i=\mu_1+1}^d k_{\sigma(i)} ^{2/q_{\sigma(i)}}$, then \eqref{iq_ineq1} follows from the conditions $s_{\sigma(i)} = k_{\sigma(i)}$, $1\le i\le \mu_1$ (see \eqref{s_def1}) and the inequalities $s_{\sigma(i)}\ge 1$, $\mu_1+1\le i\le d$.

Let
\begin{align}
\label{n_predel1}
\prod _{i=1}^{\mu_1} k_{\sigma(i)} \prod _{i=\mu_1+1}^d k_{\sigma(i)} ^{2/q_{\sigma(i)}} < n \le \prod _{i=1}^{\mu_2} k_{\sigma(i)} \prod _{i=\mu_2+1}^d k_{\sigma(i)} ^{2/q_{\sigma(i)}}.
\end{align}

If $\mu_2 = \mu_1+1$ (i.e., $I_q$ is a singleton), then \eqref{iq_ineq1} follows from the inequality $$ s_{\sigma(\mu_1+1)} \stackrel{\eqref{s_l_est_n12kl}}{\ge} \Bigl(n^{1/2} \prod _{1\le i\le \mu_1} k_{\sigma(i)}^{-1/2} \cdot k_{\sigma(\mu_1+1)} ^{-1/q_{\sigma(\mu_1+1)}} \prod _{i=\mu_2+1}^d k_{\sigma(i)} ^{-1/q_{\sigma(i)}}\Bigr)^{\frac{1}{1/2-1/q_{\sigma(\mu_1+1)}}}$$ and the conditions $s_{\sigma(i)} = k_{\sigma(i)}$, $1\le i\le \mu_1$, $s_{\sigma(i)}=1$, $\mu_2+1\le i\le d$ (see \eqref{s_def1}).

Let ${\rm card}\, I_q\ge 2$. Consider the segment $[(1/\overline{q})_I, \, (1/\overline{2})_I] \cap \Delta_I$. One endpoint of this interval is $(1/\overline{q})_I$, and the other one is $\sum \limits _{j=1}^m \frac{\varkappa_j}{(\overline{p}_{\alpha_j})_I}$, where $\varkappa_j\ge 0$, $1\le j\le m$, $\sum \limits _{j=1}^m \varkappa_j=1$. Let the vector $\overline{\theta}_*$ be defined by the equation $\frac{1}{\overline{\theta}_*} = \sum \limits _{j=1}^m \frac{\varkappa_j}{\overline{p}_{\alpha_j}}$. For $\tau\in [0, \, 1]$, we define the vector $\overline{\theta}^\tau= (\theta^\tau_1, \, \dots, \, \theta ^\tau_d)$ by the equation $\frac{1}{\overline{\theta}^\tau} = \frac{1-\tau}{\overline{\theta}(\overline{\alpha})} + \frac{\tau}{\overline{\theta}_*}$. Then $\frac{1}{\overline{\theta}^\tau}\in \Delta$ and
\begin{align}
\label{theta_tau_om}
\omega' _{\theta ^\tau _{\sigma(\mu_1+1)}, q_{\sigma (\mu_1+1)}} = \dots = \omega' _{\theta^\tau _{\sigma(\mu_2)}, q_{\sigma(\mu_2)}} \in [0, \, 1]
\end{align}
(since $(1/\overline{\theta}^\tau)_I\in [(1/\overline{q})_I, \, (1/\overline{2})_I]$).

Let
$$
D = \{1/\overline{p}\in [0, \, 1]^d:\; p_{\sigma(i)} > q_{\sigma(i)}, \; 1\le i\le \mu_1, \; 
\omega'_{p_{\sigma(\mu_2)},q_{\sigma(\mu_2)}} < \omega'_{p_{\sigma(i)},q_{\sigma(i)}}, \; \mu_2+1\le i\le d\},
$$
$$
\lambda = \sup \{\tau \in [0, \, 1]:\; 1/\overline{\theta}^\tau\in D\}.
$$
Then $\lambda>0$, $$\frac{1}{\overline{\theta} ^{\lambda}} = \sum \limits _{j=1}^m \frac{\mu'_j}{\overline{p}_{\alpha_j}}$$
for some $\mu'_j\ge 0$, $\sum \limits _{j=1}^m \mu'_j = 1$.

We show that $\overline{\theta} ^{\lambda} \in \Theta$. Indeed, if $\lambda=1$, the following two cases may hold:
\begin{enumerate}
\item $1/\overline{\theta} ^{\lambda}$ belongs to an $m-2$-dimensional face $\Delta'\subset\Delta$ and satisfies \eqref{theta_tau_om}; by generality of position (see asserion 2 of Definition \ref{gen_pos}), ${\rm aff}\, \Delta'$ and the plane given by equations \eqref{theta_tau_om} are complementary, and $1/\overline{\theta} ^{\lambda}$ lies in the interior of $\Delta'$, $\omega'_{\theta^\tau _{\sigma(\mu_1+1)}, q_{\sigma(\mu_1 + 1)}}\in (0, \, 1)$. Hence $\overline{\theta} ^{\lambda} \in \Theta$;
\item $(1/\overline{\theta}_*)_I=(1/\overline{2})_I$ lies in the interior of $\Delta$. By generality of position, we again get $\overline{\theta} ^{\lambda} \in \Theta$.
\end{enumerate}
If $\lambda<1$, then $1/\overline{\theta} ^{\lambda}$ lies in relative interior of $\Delta$, and, besides \eqref{theta_tau_om}, an additional equality $p_i=q_i$ or $\omega'_{p_i,q_i} = \omega'_{p_{\sigma(\mu_2)}, q_{\sigma(\mu_2)}}\in (0, \, 1)$ holds; the last inclusion holds since $0<\lambda<1$. By generality of position, these conditions together give the set from $\tilde{\cal Z}_m$. We again apply assertion 2 of Definition \ref{gen_pos} and get $\overline{\theta} ^{\lambda} \in \Theta$.

From \eqref{psi_def} and \eqref{psi_eq_nu_phi} it follows that
\begin{align}
\label{th_tau_ineq1}
\nu_{\alpha_1}^{\lambda_1(\overline{\alpha})} \dots \nu_{\alpha_m} ^{\lambda_m(\overline{\alpha})}\Phi(\overline{\theta}(\overline{\alpha}), \, \overline{q}, \overline{k}, \, n)\le \nu_{\alpha_1}^{\mu'_1} \dots \nu_{\alpha_m} ^{\mu'_m}\Phi(\overline{\theta}^\lambda, \, \overline{q}, \overline{k}, \, n).
\end{align}
This together with \eqref{phi_case10}, \eqref{phi_case30}, Remark \ref{t1t1_pr}, \eqref{nu1_nu2}, \eqref{n_predel1}, \eqref{theta_tau_om}, the definition of $D$ and equalities $\theta_{\sigma(i)}(\overline{\alpha}) = q_{\sigma(i)}$ $(\mu_1+1\le i\le \mu_2)$ yields that
$$
\prod _{i=1}^d s_i^{1/\theta_i(\overline{\alpha})-1/\theta_i^\lambda} \prod_{i=1}^{\mu_1} k_{\sigma(i)} ^{1/q_{\sigma(i)} - 1/\theta_{\sigma(i)}(\overline{\alpha})} \le $$$$ \le \prod_{i=1}^{\mu_1} k_{\sigma(i)} ^{1/q_{\sigma(i)} - 1/\theta^\lambda_{\sigma(i)}} \Bigl( n^{1/2} \prod _{i=1}^{\mu_1} k_{\sigma(i)} ^{-1/2} \prod _{i=\mu_1+1}^d k_{\sigma(i)} ^{-1/ q_{\sigma(i)}}\Bigr)^{\frac{1/q_{\sigma(\mu_1+1)}-1/\theta ^\lambda _{\sigma(\mu_1+1)}}{1/2-1/q_{\sigma(\mu_1+1)}}}.
$$
Applying \eqref{s_def1} and the equalities $\theta_{\sigma(i)}(\overline{\alpha}) = q_{\sigma(i)}$, $\mu_1+1\le i\le \mu_2$, we obtain that 
$$
\prod_{i=\mu_1+1}^{\mu_2} s_{\sigma(i)} ^{1/q_{\sigma(i)} -1/\theta^\lambda _{\sigma(i)}} \le \Bigl( n^{1/2} \prod _{i=1}^{\mu_1} k_{\sigma(i)} ^{-1/2} \prod _{i=\mu_1+1}^d k_{\sigma(i)} ^{-1/ q_{\sigma(i)}}\Bigr)^{\frac{1/q_{\sigma(\mu_1+1)}-1/\theta ^\lambda _{\sigma(\mu_1+1)}}{1/2-1/q_{\sigma(\mu_1+1)}}}.
$$
This together with \eqref{s_def1}, \eqref{theta_tau_om} implies \eqref{iq_ineq1}.

Let $I=I_2$. We show that
\begin{align}
\label{iq_ineq2} n \ge k_1^{\frac{2}{q_1}}\dots k_d ^{\frac{2}{q_d}}s_1^{1-\frac{2}{q_1}} \dots s_d ^{1- \frac{2}{q_d}}.
\end{align}

By Proposition \ref{i2_sta}, $n> \prod _{i=1}^{\nu_1} k_{\sigma(i)} \prod _{i=\nu_1+1}^d k_{\sigma(i)} ^{2/q_{\sigma(i)}}$.

If $n> \prod _{i=1}^{\nu_2} k_{\sigma(i)} \prod _{i=\nu_2+1}^d k_{\sigma(i)} ^{2/q_{\sigma(i)}}$, then \eqref{iq_ineq2} follows from the inequalities $s_i\le k_i$, $1\le i\le d$, and the equalities $s_{\sigma(i)}=1$, $\nu_2+1\le i \le d$ (see \eqref{s_def3}).

Let
\begin{align}
\label{n_predel2} \prod _{i=1}^{\nu_1} k_{\sigma(i)} \prod _{i=\nu_1+1}^d k_{\sigma(i)} ^{2/q_{\sigma(i)}}< n \le \prod _{i=1}^{\nu_2} k_{\sigma(i)} \prod _{i=\nu_2+1}^d k_{\sigma(i)} ^{2/q_{\sigma(i)}}.
\end{align}

If $\nu_2 = \nu_1+1$, i.e., $I_2$ is a singleton, then \eqref{iq_ineq2} follows from the inequality $$ s_{\sigma(\nu_1+1)} \stackrel{\eqref{s_l_est_n121}}{\le} \Bigl(n^{1/2} \prod _{1\le i\le \nu_1} k_{\sigma(i)}^{-1/2} \prod _{i=\nu_1+1}^d k_{\sigma(i)} ^{-1/q_{\sigma(i)}}\Bigr)^{\frac{1}{1/2-1/q_{\sigma(\nu_1+1)}}}$$ and the conditions $s_{\sigma(i)} = k_{\sigma(i)}$, $1\le i\le \nu_1$, $s_{\sigma(i)}=1$, $\nu_2\le i\le d$ (see \eqref{s_def3}).

Let ${\rm card}\, I_2\ge 2$. Consider the segment $[(1/\overline{q})_I, \, (1/\overline{2})_I] \cap \Delta_I$. One endpoint of this interval is $(1/\overline{2})_I$, and the other one is $\sum \limits _{j=1}^m \frac{\varkappa_j}{(\overline{p}_{\alpha_j})_I}$, where $\varkappa_j\ge 0$, $1\le j\le m$, $\sum \limits _{j=1}^m \varkappa_j=1$. Let the vector $\overline{\theta}_*$ be defined by the equation $1/\overline{\theta}_*= \sum \limits _{j=1}^m \frac{\varkappa_j}{\overline{p}_{\alpha_j}}$. Given $\tau\in [0, \, 1]$, we define the vector $\overline{\theta}^\tau= (\theta^\tau_1, \, \dots, \, \theta ^\tau_d)$ by the equation $\frac{1}{\overline{\theta}^\tau} = \frac{1-\tau}{\overline{\theta}(\overline{\alpha})} + \frac{\tau}{\overline{\theta}_*}$. Then $\frac{1}{\overline{\theta}^\tau}\in \Delta$ and
\begin{align}
\label{theta_tau_om1}
\omega' _{\theta ^\tau _{\sigma(\nu_1+1)}, q_{\sigma (\nu_1+1)}} = \dots = \omega' _{\theta^\tau _{\sigma(\nu_2)}, q_{\sigma(\nu_2)}} \in [0, \, 1].
\end{align}

Let
$$
D = \{1/\overline{p}\in [0, \, 1]^d:\; p_{\sigma(i)} < 2, \; \nu_2+1\le i\le d, \;
\omega'_{p_{\sigma(\nu_1+1)},q_{\sigma(\nu_1+1)}} > \omega'_{p_{\sigma(i)},q_{\sigma(i)}}, \; 1\le i\le \nu_1\},
$$
$$
\lambda = \sup \{\tau \in [0, \, 1]:\; 1/\overline{\theta}^\tau\in D\}.
$$
Then $\lambda>0$, $$\frac{1}{\overline{\theta}^\lambda} = \sum \limits _{j=1}^m \frac{\mu'_j}{\overline{p}_{\alpha_j}}$$
for some $\mu'_j\ge 0$, $\sum \limits _{j=1}^m \mu'_j = 1$. As in the case $I=I_q$, we get \eqref{th_tau_ineq1}. This together with Remark \ref{t1t1_pr}, \eqref{phi_case10}, \eqref{phi_case30}, \eqref{nu1_nu2}, \eqref{n_predel2}, \eqref{theta_tau_om1} and the definition of $D$ implies that
$$
\prod _{i=1}^d s_i^{1/\theta_i(\overline{\alpha})-1/\theta^\lambda _i} \prod _{i=1}^{\nu_1} k_{\sigma(i)} ^{1/q_{\sigma(i)} - 1/\theta_{\sigma(i)}(\overline{\alpha})}  n^{-1/2} \prod _{i=1}^{\nu_1} k_{\sigma(i)} ^{1/2} \prod _{i=\nu_1+1}^d k_{\sigma(i)} ^{1/ q_{\sigma(i)}}\le $$$$ \le \prod _{i=1}^{\nu_1} k_{\sigma(i)} ^{1/q_{\sigma(i)} - 1/\theta^\lambda_{\sigma(i)}} \Bigl( n^{1/2} \prod _{i=1}^{\nu_1} k_{\sigma(i)} ^{-1/2} \prod _{i=\nu_1+1}^d k_{\sigma(i)} ^{-1/ q_{\sigma(i)}}\Bigr)^{\frac{1/q_{\sigma(\nu_1+1)}-1/\theta^\lambda _{\sigma(\nu_1+1)}}{1/2-1/q_{\sigma(\nu_1+1)}}}.
$$
Hence, by \eqref{s_def3} and the equalities $\theta_{\sigma(i)}(\overline{\alpha})=2$, $\nu_1+1\le i\le \nu_2$, we have $$\prod_{i=\nu_1+1}^{\nu_2} s_{\sigma(i)} ^{1/2 -1/\theta^\lambda_{\sigma(i)}} \le \Bigl( n^{1/2} \prod _{i=1}^{\nu_1} k_{\sigma(i)} ^{-1/2} \prod _{i=\nu_1+1}^d k_{\sigma(i)} ^{-1/ q_{\sigma(i)}}\Bigr)^{\frac{1/2-1/\theta^\lambda_{\sigma(\nu_1+1)}}{1/2-1/q_{\sigma(\nu_1+1)}}};$$
this together with \eqref{s_def3}, \eqref{theta_tau_om1} yields \eqref{iq_ineq2}.

{\bf Checking the inclusion $W\subset 2^dM$.} Recall that the set $W$ is given by formula \eqref{w_mg2_def}. We put $u_{\sigma(i)} = \lceil s_{\sigma(i)} \rceil$ for $i\le \nu_1$, $u_{\sigma(i)} = \lfloor s_{\sigma(i)} \rfloor$ for $i> \nu_1$.

If the inclusion $W\subset 2^dM$ holds, then for $I=I_q$ we get
$$
d_n(M, \, l_{\overline{q}}^{\overline{k}}) \underset{d} {\gtrsim} d_n(W, \, l_{\overline{q}}^{\overline{k}}) \stackrel{\eqref{vk1kddn},\eqref{sigma_choice}, \eqref{w_mg2_def},\eqref{s_def1},\eqref{iq_ineq1}}{\gtrsim} \nu_{\alpha_1}^{\lambda_1(\overline{\alpha})} \dots \nu_{\alpha_m} ^{\lambda_m(\overline{\alpha})} \prod _{i=1}^{\mu_1}k_{\sigma(i)}^{1/q_{\sigma(i)} - 1/\theta_{\sigma(i)}(\overline{\alpha})} \stackrel{\eqref{phi_case1}, \eqref{sigma_choice}}{=}
$$
$$
= \nu_{\alpha_1}^{\lambda_1(\overline{\alpha})} \dots \nu_{\alpha_m} ^{\lambda_m(\overline{\alpha})} \Phi(\overline{\theta}(\overline{\alpha}), \, \overline{q}, \, \overline{k}, \, n).
$$
Here we used the inequality
\begin{align}
\label{p3sta2}
n \le \prod _{i=1}^{\mu_2} k_{\sigma(i)} \prod _{i=\mu_2+1}^d k_{\sigma(i)}^{2/q_{\sigma(i)}};
\end{align}
see assertion 3 of Proposition \ref{iq_sta}.

For $I=I_2$ we get
$$
d_n(M, \, l_{\overline{q}}^{\overline{k}}) \underset{d} {\gtrsim} d_n(W, \, l_{\overline{q}}^{\overline{k}}) \stackrel{\eqref{vk1kddn}, \eqref{sigma_choice}, \eqref{w_mg2_def}, \eqref{s_def3},\eqref{iq_ineq2}}{\gtrsim} \nu_{\alpha_1}^{\lambda_1(\overline{\alpha})} \dots \nu_{\alpha_m} ^{\lambda_m(\overline{\alpha})} n^{-1/2}\prod _{i=1}^{d}k_{\sigma(i)}^{1/q_{\sigma(i)}} \prod _{i=1}^{\nu_1} k_{\sigma(i)}^{1/2 - 1/\theta_{\sigma(i)}(\overline{\alpha})} \stackrel{\eqref{phi_case3}, \eqref{sigma_choice}}{=}
$$
$$
= \nu_{\alpha_1}^{\lambda_1(\overline{\alpha})} \dots \nu_{\alpha_m} ^{\lambda_m(\overline{\alpha})} \Phi(\overline{\theta}(\overline{\alpha}), \, \overline{q}, \, \overline{k}, \, n).
$$
Here we used the inequality
\begin{align}
\label{p3sta3}
n > \prod _{i=1}^{\nu_1} k_{\sigma(i)} \prod _{i=\nu_1+1}^d k_{\sigma(i)}^{2/q_{\sigma(i)}};
\end{align}
see assertion 1 of Proposition \ref{i2_sta}.

For $I=I_\omega$ we have
\begin{align}
\label{sta4n}
\prod _{i=1}^{t_1-1} k_{\sigma(i)} \prod _{i=t_1}^d k_{\sigma(i)}^{2/q_{\sigma(i)}}<n \le \prod _{i=1}^{t_1+m-1} k_{\sigma(i)} \prod _{i=t_1+m}^d k_{\sigma(i)}^{2/q_{\sigma(i)}}
\end{align}
(see Proposition \ref{iomega_sta}). We get
$$
d_n(M, \, l_{\overline{q}}^{\overline{k}}) \underset{d} {\gtrsim} d_n(W, \, l_{\overline{q}}^{\overline{k}}) \stackrel{\eqref{vk1kddn},\eqref{sigma_choice}, \eqref{i_om_str}, \eqref{w_mg2_def},\eqref{s_def2},\eqref{i_om_n}}{\gtrsim} $$$$ \gtrsim \nu_{\alpha_1}^{\lambda_1(\overline{\alpha})} \dots \nu_{\alpha_m} ^{\lambda_m(\overline{\alpha})} \prod _{i=1}^{t_1-1} k_{\sigma(i)}^{1/q_{\sigma(i)} - 1/\theta_{\sigma(i)}(\overline{\alpha})} \prod _{i=t_1}^{t_1+m-1} s_{\sigma(i)}^{1/q_{\sigma(i)} - 1/\theta_{\sigma(i)}(\overline{\alpha})}=:P.
$$
We show that
\begin{align}
\label{prod_si_q_th}
\prod _{i=t_1}^{t_1+m-1} s_{\sigma(i)}^{1/q_{\sigma(i)} - 1/\theta_{\sigma(i)}(\overline{\alpha})} = \Bigl(n^{1/2}\prod _{i=1}^{t_1-1} k_{\sigma(i)} ^{-1/2}\prod _{i=t_1}^d k_{\sigma(i)}^{-1/q_{\sigma(i)}}\Bigr) ^{\frac{1/q_{\sigma(t_1)} - 1/\theta_{\sigma(t_1)}(\overline{\alpha})}{1/2-1/q_{\sigma(t_1)}}}.
\end{align}
It suffices to check that
\begin{align}
\label{prod_it_ajsi}
\prod _{i=t_1}^{t_1+m-1}A_{j,\sigma(i)}^{1/q_{\sigma(i)} - 1/\theta_{\sigma(i)}(\overline{\alpha})} = \Bigl(n^{1/2}\prod _{i=1}^{t_1-1} k_{\sigma(i)} ^{-1/2}\prod _{i=t_1}^d k_{\sigma(i)}^{-1/q_{\sigma(i)}}\Bigr) ^{\frac{1/q_{\sigma(t_1)} - 1/\theta_{\sigma(t_1)}(\overline{\alpha})}{1/2-1/q_{\sigma(t_1)}}}
\end{align}
and apply \eqref{s_def4}.

Taking into account the equalities $\omega_{\theta_{\sigma(t_1)}(\overline{\alpha}), q_{\sigma(t_1)}} = \dots = \omega_{\theta_{\sigma(t_1+m-1)}(\overline{\alpha}), q_{\sigma(t_1+m-1)}}$, we get
$$
\prod _{i=t_1}^{t_1+m-1}A_{j,\sigma(i)}^{1/q_{\sigma(i)} - 1/\theta_{\sigma(i)}(\overline{\alpha})} = \prod _{i=t_1}^{t_1+m-1}A_{j,\sigma(i)}^{-\omega_{\theta_{\sigma(i)}(\overline{\alpha}), q_{\sigma(i)}}(1/2-1/q_{\sigma(i)})} = 
$$
$$
=\Bigl(\prod _{i=t_1}^{t_1+m-1}A_{j,\sigma(i)}^{1/2-1/q_{\sigma(i)}} \Bigr)^{-\omega_{\theta_{\sigma(t_1)}(\overline{\alpha}), q_{\sigma(t_1)}}} \stackrel{\eqref{pr_aj_12q}}{=} \Bigl(n^{1/2}\prod _{i=1}^{t_1-1} k_{\sigma(i)} ^{-1/2}\prod _{i=t_1}^d k_{\sigma(i)}^{-1/q_{\sigma(i)}}\Bigr)^{-\omega_{\theta_{\sigma(t_1)}(\overline{\alpha}), q_{\sigma(t_1)}}};
$$
this completes the proof of \eqref{prod_it_ajsi}.

From \eqref{prod_si_q_th} we get
\begin{align}
\label{phi_form_eq_om}
\begin{array}{c}
P=\nu_{\alpha_1}^{\lambda_1(\overline{\alpha})} \dots \nu_{\alpha_m} ^{\lambda_m(\overline{\alpha})} \prod _{i=1}^{t_1-1} k_{\sigma(i)}^{1/q_{\sigma(i)} - 1/\theta_{\sigma(i)}(\overline{\alpha})} \times \\ \times \Bigl(n^{1/2}\prod _{i=1}^{t_1-1} k_{\sigma(i)} ^{-1/2}\prod _{i=t_1}^d k_{\sigma(i)}^{-1/q_{\sigma(i)}}\Bigr) ^{\frac{1/q_{\sigma(t_1)} - 1/\theta_{\sigma(t_1)}(\overline{\alpha})}{1/2-1/q_{\sigma(t_1)}}} =
\\
= \nu_{\alpha_1}^{\lambda_1(\overline{\alpha})} \dots \nu_{\alpha_m} ^{\lambda_m(\overline{\alpha})} \Phi(\overline{\theta}(\overline{\alpha}), \, \overline{q}, \, \overline{k}, \, n);
\end{array}
\end{align}
the last equality follows from Remark \ref{t1t1_pr} and \eqref{sta4n}.

In order to prove the inclusion $W\subset 2^d M$, it suffices to check that, for any $\beta \in A$,
\begin{align}
\label{incl_ineq11} \nu_{\alpha_1}^{\lambda_1(\overline{\alpha})} \dots \nu_{\alpha_m} ^{\lambda_m(\overline{\alpha})} \prod _{i=1}^d s_i^{1/p_{\beta,i}-1/\theta_i(\overline{\alpha})} \le \nu_\beta.
\end{align}

If $\beta\in \{\alpha_j\}_{j=1}^m$, it follows from \eqref{nu1_nu2}.

Let $\beta \notin \{\alpha_j\}_{j=1}^m$.

Recall the notation $$\Delta = {\rm conv}\, \{1/\overline{p}_{\alpha_j}\}_{j=1}^m, \quad \Delta_I = \{\overline{x}_I:\; \overline{x}\in \Delta\}.$$

{\bf The case $I=I_q$.} Then we have \eqref{p3sta2} and $\# I = m-1$.

By Lemma \ref{conv_hull} and generality of position (see assertion 1 of Definition \ref{gen_pos}), there is $j\in \{1, \, \dots, \, m\}$ such that 
\begin{align}
\label{1qi_in_conv}
(1/\overline{q})_I \in {\rm conv}\, \{(1/\overline{p}_{\alpha_1})_I, \, \dots, \, (1/\overline{p}_{\alpha_{j-1}})_I, \, (1/\overline{p}_{\alpha_{j+1}})_I, \, \dots, \, (1/\overline{p}_{\alpha_m})_I, \, (1/\overline{p}_{\beta})_I\}.
\end{align}

Without loss of generality, $j=m$. We set $\overline{\gamma} = (\gamma_1, \, \dots, \, \gamma_{m-1}, \, \gamma_m) := (\alpha_1, \, \dots, \, \alpha_{m-1}, \, \beta)$, $\tilde \Delta = {\rm conv}\{1/\overline{p}_{\gamma_j}\}_{j=1}^m$.

Let $t_{\sigma(i)} = s_{\sigma(i)}$, $i\in \{1, \, \dots, \, \mu_1\}\cup \{\mu_2+1, \, \dots, \, d\}$ (see \eqref{s_def1}); for $i\in \{\mu_1+1, \, \dots, \, \mu_2\}$, we define the numbers $t_{\sigma(i)}$ by the equations
\begin{align}
\label{si_def_ini11} \frac{\nu_{\gamma_j}}{\nu_{\gamma_1}} = \prod _{i=1}^d t_i ^{1/p_{\gamma_j,i} - 1/p_{\gamma_1,i}}, \quad 2\le j\le m.
\end{align}
By generality of position (see assertion 1 of Definition \ref{gen_pos}), the vectors $\{(1/\overline{p}_{\gamma_j})_I\}_{j=1}^m$ are affinely independent; therefore the system of equations \eqref{si_def_ini11} has the unique solution.

Direct calculations show that the analogue of Proposition \ref{sta_nu12} holds: if $\mu_{j,k}\ge 0$, $1\le j\le m$, $k=1, \, 2$, $\sum \limits _{j=1}^m \mu_{j,k}=1$, the vectors $\overline{\theta}_k$ and the numbers $\nu_{(k)}$ are defined by the equations
\begin{align}
\label{1theta_k11}
\frac{1}{\overline{\theta}_k} =\sum \limits _{j=1}^m \frac{\mu_{j,k}}{\overline{p}_{\gamma_j}}, \quad \nu_{(k)} = \prod _{j=1}^m \nu_{\gamma_j}^{\mu_{j,k}}, \quad k=1, \, 2,
\end{align}
then
\begin{align}
\label{nu1_nu211} \frac{\nu_{(1)}}{\nu_{(2)}} =  \prod _{l=1}^d t_l^{1/\theta_{1,l}-1/\theta_{2,l}}.
\end{align}

Since $(1/\overline{q})_I\in \tilde \Delta$, there are numbers $\lambda_j(\overline{\gamma})\ge 0$ ($j=1, \, \dots, \, m$) such that $\sum \limits _{j=1}^m \lambda_j(\overline{\gamma})=1$ and $(1/\overline{q})_I = \sum \limits _{j=1}^m \lambda_j(\overline{\gamma})/ (\overline{p}_{\gamma_j})_I$. We define the vector $\overline{\theta}(\overline{\gamma})$ by the equation $1/\overline{\theta}(\overline{\gamma}) = \sum \limits _{j=1}^m \lambda_j(\overline{\gamma})/ \overline{p}_{\gamma_j}$. By generality of position (see assertion 2 of Definition \ref{gen_pos}), we have $\lambda_j(\overline{\gamma})> 0$ ($j=1, \, \dots, \, m$); otherwise, for some set $J\subset \{1, \, \dots, \, m\}$, $\# J<m$, the simplex ${\rm conv}\, \{1/\overline{p}_{\gamma_j}\}_{j\in J}$ and the plane $Z=\{(x_1, \, \dots, \, x_d):\; x_i = 1/q_i, \; i\in I\}$ intersect. In particular,
\begin{align}
\label{1qiniconv} (1/\overline{q})_I \notin {\rm conv}\, \{(1/\overline{p}_{\alpha_j})_I\}_{j=1}^{m-1}.
\end{align}
Also from assertion 2 of Definition \ref{gen_pos} it follows that $Z$ and ${\rm aff}\, \{1/\overline{p}_{\gamma_j}\}_{j=1}^m$ are complementary. Hence $\overline{\theta}(\overline{\gamma}) \in \Theta$.

Let
$$
D =\{1/\overline{p}\in [0, \, 1]^d:\; p_{\sigma(j)} > q_{\sigma(j)}, \, 1\le j\le \mu_1, \; p_{\sigma(j)} < q_{\sigma(j)}, \, \mu_2+1\le j\le d\}.
$$
Then $D$ is open in $[0, \, 1]^d$ and $1/\overline{\theta}(\overline{\alpha})\in D$. We set
$$
\tau = \sup \Bigl\{t\in [0, \, 1]:\; \frac{1-t}{\overline{\theta}(\overline{\alpha})} + \frac{t}{\overline{\theta}(\overline{\gamma})} \in D\Bigr\}, \quad \frac{1}{\overline{r}} = \frac{1-\tau}{\overline{\theta}(\overline{\alpha})} + \frac{\tau}{\overline{\theta}(\overline{\gamma})}.
$$
Then $\tau>0$. We show that $\overline{r} \in \Theta$. Indeed, if $\tau=1$, then $\overline{r} = \overline{\theta}(\overline{\gamma})\in \Theta$. If $0<\tau <1$, then $1/\overline{r}\in {\rm conv}\, \{1/\overline{p}_{\alpha_1}, \, \dots, \, 1/\overline{p}_{\alpha_m}, \, 1/\overline{p}_\beta\}$, the equalities $r_i=1/q_i$ hold for $i\in I$ and also for one $i\notin I$. We again use generality of position and get $\overline{r} \in \Theta$.

From \eqref{psi_def}, \eqref{psi_eq_nu_phi} it follows that
\begin{align}
\label{789}
\begin{array}{c}
\nu_{\alpha_1}^{\lambda_1(\overline{\alpha})} \dots \nu_{\alpha_m}^{\lambda_m(\overline{\alpha})}\Phi(\overline{\theta}(\overline{\alpha}),  \, \overline{q}, \, \overline{k}, \, n) \le \\ \le\nu_{\alpha_1}^{(1-\tau)\lambda_1(\overline{\alpha})+\tau\lambda_1(\overline{\gamma})} \dots \nu_{\alpha_{m-1}}^{(1-\tau)\lambda_{m-1}(\overline{\alpha}) + \tau \lambda_{m-1}(\overline{\gamma})} \nu_{\alpha_m}^{(1-\tau) \lambda_m(\overline{\alpha})} \nu_\beta ^{\tau \lambda_m(\overline{\gamma})} \Phi(\overline{r},  \, \overline{q}, \, \overline{k}, \, n).
\end{array}
\end{align}

From \eqref{p3sta2}, the definition of $D$, $\overline{r}$, $\overline{\theta}(\overline{\gamma})$ and \eqref{phi_case1}, \eqref{sigma_choice} it follows that \eqref{789} can be written as
\begin{align}
\label{phi_q_in}
\begin{array}{c}
\nu_{\alpha_1}^{\lambda_1(\overline{\alpha})} \dots \nu_{\alpha_m}^{\lambda_m(\overline{\alpha})} \prod _{i=1}^{\mu_1} k_{\sigma(i)}^{1/q_{\sigma(i)} - 1/\theta_{\sigma(i)}(\overline{\alpha})} \le 
\\
\le \prod _{j=1}^{m-1}\nu_{\alpha_j}^{(1-\tau)\lambda_j(\overline{\alpha})+\tau\lambda_j(\overline{\gamma})} \cdot \nu_{\alpha_m}^{(1-\tau) \lambda_m(\overline{\alpha})} \nu_{\beta}^{\tau \lambda_m(\overline{\gamma})} \prod _{i=1}^{\mu_1} k_{\sigma(i)}^{1/q_{\sigma(i)} - (1-\tau)/\theta_{\sigma(i)}(\overline{\alpha})-\tau/\theta_{\sigma(i)}(\overline{\gamma})};
\end{array}
\end{align}
hence
$$
\nu_{\alpha_1}^{\lambda_1(\overline{\alpha})} \dots \nu_{\alpha_m}^{\lambda_m(\overline{\alpha})} \prod _{i=1}^{\mu_1} k_{\sigma(i)}^{1/q_{\sigma(i)} - 1/\theta_{\sigma(i)}(\overline{\alpha})} \le 
$$
$$
\le \nu_{\gamma_1}^{\lambda_1(\overline{\gamma})} \dots \nu_{\gamma_{m-1}}^{\lambda_{m-1}(\overline{\gamma})} \nu_{\gamma_m}^{\lambda_m(\overline{\gamma})} \prod _{i=1}^{\mu_1} k_{\sigma(i)}^{1/q_{\sigma(i)} -1/\theta_{\sigma(i)}(\overline{\gamma})}.
$$
We divide both parts of the inequality by $\nu_{\alpha_j}$ ($1\le j\le m-1$) and apply \eqref{nu1_nu2}, \eqref{nu1_nu211}; this yields
$$
\prod _{i=1}^d s_{\sigma(i)} ^{1/\theta_{\sigma(i)}(\overline{\alpha}) -1/p_{\alpha_j,\sigma(i)}} \prod _{i=1}^{\mu_1} k_{\sigma(i)}^{1/q_{\sigma(i)} - 1/\theta_{\sigma(i)}(\overline{\alpha})}\le \prod _{i=1}^d t_{\sigma(i)} ^{1/\theta_{\sigma(i)}(\overline{\gamma}) -1/p_{\alpha_j,\sigma(i)}} \prod _{i=1}^{\mu_1} k_{\sigma(i)}^{1/q_{\sigma(i)} - 1/\theta_{\sigma(i)}(\overline{\gamma})}.
$$
Since $s_{\sigma(i)} = t_{\sigma(i)} = k_{\sigma(i)}$ for $i\le \mu_1$, $s_{\sigma(i)} = t_{\sigma(i)} =1$ for $i\ge \mu_2+1$, $\theta_{\sigma(i)}(\overline{\alpha}) = \theta_{\sigma(i)}(\overline{\gamma}) = q_{\sigma(i)}$ for $\mu_1+1\le i\le \mu_2$, we get
$$
\prod _{i\in I} s_i ^{1/q_i -1/p_{\alpha_j,i}} \le \prod _{i\in I} t_i ^{1/q_i -1/p_{\alpha_j,i}}.
$$
Let $\tau_j\ge 0$, $1\le j\le m-1$, $\sum \limits _{j=1} ^{m-1} \tau_j=1$. Then
$$
\prod _{i\in I} s_i ^{1/q_i -\sum \limits_{j=1}^{m-1} \tau_j/p_{\alpha_j,i}} \le \prod _{i\in I} t_i ^{1/q_i -\sum \limits_{j=1}^{m-1} \tau_j/p_{\alpha_j,i}}.
$$
By \eqref{1qi_in_conv}, there are $(\tau_1, \, \dots, \, \tau_{m-1})$ such that the vectors $\Bigl(\frac{1}{\overline{q}} - \sum \limits_{j=1}^{m-1} \frac{\tau_j}{\overline{p}_{\alpha_j}}\Bigr)_I$ and $\Bigl(\frac{1}{\overline{p}_\beta} - \sum \limits_{j=1}^{m-1} \frac{\tau_j}{\overline{p}_{\alpha_j}}\Bigr)_I$ are codirectional, and $\Bigl(\frac{1}{\overline{q}} - \sum \limits_{j=1}^{m-1} \frac{\tau_j}{\overline{p}_{\alpha_j}}\Bigr)_I\ne 0$ by \eqref{1qiniconv}. Hence
$$
\prod _{i\in I} s_i ^{1/p_{\beta,i} -\sum \limits_{j=1}^{m-1} \tau_j/p_{\alpha_j,i}} \le \prod _{i\in I} t_i ^{1/p_{\beta,i} -\sum \limits_{j=1}^{m-1} \tau_j/p_{\alpha_j,i}}.
$$
We again take into account the equality $s_i=t_i$ for $i\notin I$ and get
$$
\prod _{i=1}^d s_i ^{1/p_{\beta,i} -\sum \limits_{j=1}^{m-1} \tau_j/p_{\alpha_j,i}} \le \prod _{i=1}^d t_i ^{1/p_{\beta,i} -\sum \limits_{j=1}^{m-1} \tau_j/p_{\alpha_j,i}}.
$$
Hence
$$
\prod _{i=1}^d s_i^{1/\theta_i(\overline{\alpha})-\sum \limits _{j=1}^{m-1} \tau_j/p_{\alpha_j,i}} \prod _{i=1}^d s_i^{1/p_{\beta,i}-1/\theta_i(\overline{\alpha})} \le \prod _{i=1}^d t_i ^{1/p_{\beta,i} -\sum \limits_{j=1}^{m-1} \tau_j/p_{\alpha_j,i}}.
$$
Applying \eqref{nu1_nu2} and \eqref{nu1_nu211}, we obtain
$$
\frac{\nu_{\alpha_1}^{\lambda_1(\overline{\alpha})}\dots \nu_{\alpha_m}^{\lambda_m(\overline{\alpha})}}{\nu_{\alpha_1}^{\tau_1}\dots \nu_{\alpha_{m-1}}^{\tau_{m-1}}}\prod _{i=1}^d s_i^{1/p_{\beta,i}-1/\theta_i(\overline{\alpha})} \le \frac{\nu_\beta}{\nu_{\alpha_1}^{\tau_1}\dots \nu_{\alpha_{m-1}}^{\tau_{m-1}}};
$$
i.e., \eqref{incl_ineq11} holds.

{\bf The case $I=I_2$} is similar to the previous one. Recall that \eqref{p3sta3} holds. The equalities $\theta_i(\overline{\gamma}) = q_i$ for $i\in I$ are replaced by $\theta_i(\overline{\gamma}) = 2$. The set $D$ is given by the conditions
$$
D=\{1/\overline{p}\in [0, \, 1]^d:\; p_{\sigma(i)}<2, \; 1\le i\le \nu_1, \; p_{\sigma(i)}>2, \; \nu_2+1\le i\le d\}.
$$

The numbers $t_i$ are defined as follows: $t_{\sigma(i)} = s_{\sigma(i)}$, $i\in \{1, \, \dots, \, \nu_1\}\cup \{\nu_2+1, \, \dots, \, d\}$ (see \eqref{s_def3}); for $i\in \{\nu_1+1, \, \dots, \, \nu_2\}$ we define the numbers $t_{\sigma(i)}$ by \eqref{si_def_ini11}. Then again we get \eqref{nu1_nu211}.

Instead of \eqref{phi_q_in} we have (see \eqref{phi_case3}, \eqref{sigma_choice}, \eqref{p3sta3})
$$
\nu_{\alpha_1}^{\lambda_1(\overline{\alpha})} \dots \nu_{\alpha_m}^{\lambda_m(\overline{\alpha})} \prod _{i=1}^{\nu_1} k_{\sigma(i)}^{1/q_{\sigma(i)} - 1/\theta_{\sigma(i)}(\overline{\alpha})} n^{-1/2} \prod _{i=1}^{\nu_1} k_{\sigma(i)}^{1/2} \prod _{i=\nu_1+1}^d k_{\sigma(i)}^{1/q_{\sigma(i)}} \le 
$$$$
\le \nu_{\alpha_1}^{(1-\tau)\lambda_1(\overline{\alpha})+\tau\lambda_1(\overline{\gamma})} \dots \nu_{\alpha_{m-1}}^{(1-\tau)\lambda_{m-1}(\overline{\alpha}) + \tau \lambda_{m-1}(\overline{\gamma})} \nu_{\alpha_m}^{(1-\tau) \lambda_m(\overline{\alpha})} \nu_{\beta}^{\tau \lambda_m(\overline{\gamma})} \times$$$$ \times\prod _{i=1}^{\nu_1} k_{\sigma(i)}^{1/q_{\sigma(i)} - (1-\tau)/\theta_{\sigma(i)}(\overline{\alpha})-\tau/\theta_{\sigma(i)}(\overline{\gamma})}n^{-1/2} \prod _{i=1}^{\nu_1} k_{\sigma(i)}^{1/2} \prod _{i=\nu_1+1}^d k_{\sigma(i)}^{1/q_{\sigma(i)}};
$$
this implies
\begin{align}
\label{na1l1prod}
\begin{array}{c}
\nu_{\alpha_1}^{\lambda_1(\overline{\alpha})} \dots \nu_{\alpha_m}^{\lambda_m(\overline{\alpha})} \prod _{i=1}^{\nu_1} k_{\sigma(i)}^{1/q_{\sigma(i)} - 1/\theta_{\sigma(i)}(\overline{\alpha})} \le 
\\
\le \nu_{\gamma_1}^{\lambda_1(\overline{\gamma})} \dots \nu_{\gamma_{m-1}}^{\lambda_{m-1}(\overline{\gamma})} \nu_{\gamma_m}^{\lambda_m(\overline{\gamma})} \prod _{i=1}^{\nu_1} k_{\sigma(i)}^{1/q_{\sigma(i)} -1/\theta_{\sigma(i)}(\overline{\gamma})}.
\end{array}
\end{align}
We divide both parts of the inequality by $\nu_{\alpha_j}$ ($1\le j\le m-1$) and apply \eqref{nu1_nu2}, \eqref{nu1_nu211}; this yields
$$
\prod _{i=1}^d s_{\sigma(i)} ^{1/\theta_{\sigma(i)}(\overline{\alpha}) -1/p_{\alpha_j,\sigma(i)}} \prod _{i=1}^{\nu_1} k_{\sigma(i)}^{1/q_{\sigma(i)} - 1/\theta_{\sigma(i)}(\overline{\alpha})}\le \prod _{i=1}^d t_{\sigma(i)} ^{1/\theta_{\sigma(i)}(\overline{\gamma}) -1/p_{\alpha_j,\sigma(i)}} \prod _{i=1}^{\nu_1} k_{\sigma(i)}^{1/q_{\sigma(i)} - 1/\theta_{\sigma(i)}(\overline{\gamma})}.
$$
Since $s_{\sigma(i)} = t_{\sigma(i)} = k_{\sigma(i)}$ for $j\le \nu_1$, $s_{\sigma(i)} = t_{\sigma(i)} =1$ for $j\ge \nu_2+1$, $\theta_{\sigma(i)}(\overline{\alpha}) = \theta_{\sigma(i)}(\overline{\gamma}) = 2$ for $\nu_1+1\le i\le \nu_2$, we have
$$
\prod _{i\in I} s_i ^{1/2 -1/p_{\alpha_j,i}} \le \prod _{i\in I} t_i ^{1/2 -1/p_{\alpha_j,i}};
$$
the further arguments are as in the case $I=I_q$.

{\bf The case $I=I_\omega$.} In this case, \eqref{sta4n} holds. Recall that $I=\{\sigma(t_1), \, \sigma(t_1+1), \, \dots, \, \sigma(t_1+m-1)\}$.

Let $Z$ be the set from Definition \ref{nm_def1}, and the minimum in \eqref{psi_def} attains at $Z$. Then $1/\overline{\theta}(\overline{\alpha}) \in Z$,
\begin{align}
\label{z_omega}
\begin{array}{c}
\tilde Z := {\rm aff}\, Z = \{(x_1, \, \dots, \, x_d) \in \R^d:\; \omega'_{1/x_{\sigma(t_1)},q_{\sigma(t_1)}} = \omega'_{1/x_{\sigma(t_1+1)},q_{\sigma(t_1+1)}} = \\ \dots = \omega'_{1/x_{\sigma(t_1+m-1)},q_{\sigma(t_1+m-1)}}\}.
\end{array}
\end{align}

We denote $$\R^d_I = \{\overline{x}_I:\; \overline{x}\in \R^d\}, \; \tilde Z_I= \{\overline{x}_I:\; \overline{x}\in \tilde Z\}.$$
Then $\tilde Z_I$ is a straight line in $\R^d_I$ given by the equation
\begin{align}
\label{zi_2q} \tilde Z_I = \{(1/\overline{q})_I + \kappa ((1/\overline{2})_I-(1/\overline{q})_I):\; \kappa \in \R\}.
\end{align}

Let $L\subset (\R^d)_I$ be a hyperplane orthogonal to $\tilde Z_I$. For each $\overline{x}\in \R^d$ we denote by $(\overline{x})'_I$ the orthogonal projection $\overline{x}_I$ onto $L$ and set
$$\Delta'_I=\{(\overline{x})'_I:\; \overline{x}\in \Delta\}.$$
Notice that 
\begin{align}
\label{th_q_2_pr}
(1/\overline{\theta}(\overline{\alpha}))'_I = (1/\overline{2})'_I = (1/\overline{q})'_I.
\end{align}

By generality of position, for any different $\beta_1, \, \dots, \, \beta_m\in A$, the points $\{(1/\overline{p}_{\beta_j})_I\}_{j=1}^m$ are affinely independent (see assertion 1 of Definition \ref{gen_pos}), and the planes ${\rm aff}\,\{1/\overline{p}_{\beta_j}\}_{j=1}^m$ and ${\rm aff}\, Z$
are complementary (see assertion 2 of Definition \ref{gen_pos}). From \eqref{z_omega} it follows that ${\rm aff}\,\{(1/\overline{p}_{\beta_j})_I\}_{j=1}^m$ and $\tilde Z_I$ are complementary. Therefore, the points $\{(1/\overline{p}_{\beta_j})'_I\}_{j=1}^m$ are affinely independent.

Let $1/\overline{\theta}(\overline{\beta})$ be the intersection point of ${\rm aff}\,\{1/\overline{p}_{\beta_j}\}_{j=1}^m$ and ${\rm aff}\, Z$. Then $(1/\overline{\theta}(\overline{\beta}))_I$ is the intersection point of ${\rm aff}\,\{(1/\overline{p}_{\beta_j})_I\}_{j=1}^m$ and $\tilde Z_I$. Hence $(1/\overline{\theta}(\overline{\beta}))'_I = (1/\overline{\theta}(\overline{\alpha}))'_I$.
From assertion 2 of Definition \ref{gen_pos} it follows that $1/\overline{\theta}(\overline{\beta})$ cannot belong to relative boundary of the simplex ${\rm conv}\, \{1/\overline{p}_{\beta_j}\}_{j=1}^m$.
Therefore, $(1/\overline{\theta}(\overline{\beta}))'_I= (1/\overline{\theta}(\overline{\alpha}))'_I$ does not belong to the boundary of the simplex ${\rm conv}\, \{(1/\overline{p}_{\beta_j})'_I\}_{j=1}^m\subset L$; in particular, $(1/\overline{\theta}(\overline{\alpha}))'_I$ is the interior point of $\Delta'_I$.

Hence, for $\xi_j=(1/\overline{p}_{\alpha_j})'_I$ $(j=1, \, \dots, \, m)$, $\eta=(1/\overline{p}_\beta)'_I$ (where $\beta\in A\backslash \{\alpha_1, \, \dots, \, \alpha_m\}$) and $a=(1/\overline{\theta}(\overline{\alpha}))'_I$, the conditions of Lemma \ref{conv_hull} hold. Therefore, there is $j\in \{1, \, \dots, \, m\}$ such that 
$$
(1/\overline{\theta}(\overline{\alpha}))'_I \in {\rm conv}\, 
\{(1/\overline{p}_{\alpha_1})'_I, \, \dots, \, (1/\overline{p}_{\alpha_{j-1}})'_I, \, (1/\overline{p}_{\alpha_{j+1}})'_I, \, \dots, \, (1/\overline{p}_{\alpha_m})'_I, \, (1/\overline{p}_{\beta})'_I\}
$$
(and, as it was mentioned above, this point cannot belong to the boundary of this simplex). Without loss of generality, $j=m$. Let $\overline{\gamma} = (\gamma_1, \, \dots, \, \gamma_{m-1}, \, \gamma_m) = (\alpha_1, \, \dots, \, \alpha_{m-1}, \, \beta)$. We have $(1/\overline{\theta}(\overline{\alpha}))'_I = \sum \limits _{j=1}^m \kappa_j (1/\overline{p}_{\gamma_j})'_I$, where $\kappa_j> 0$, $\sum \limits _{j=1}^m \kappa_j=1$. We set $1/\overline{\theta}(\overline{\gamma}):=\sum \limits _{j=1}^m \kappa_j/\overline{p}_{\gamma_j}$, $\tilde \Delta = {\rm conv}\, \{1/\overline{p}_{\gamma_j}\}_{j=1}^m$. Since $(1/\overline{\theta}(\overline{\gamma}))'_I = (1/\overline{\theta}(\overline{\alpha}))'_I$, we have
\begin{align}
\label{th_g_z} 1/\overline{\theta}(\overline{\gamma}) \in \tilde Z.
\end{align}

Let
$$
D = \{1/\overline{p}\in [0, \, 1]^d:\; 0<\omega' _{p_{\sigma (t_1)}, q_{\sigma(t_1)}}<1,$$$$\omega' _{p_{\sigma (t_1)}, q_{\sigma(t_1)}}> \omega' _{p_{\sigma (i)}, q_{\sigma(i)}}, \; i<t_1, \; \omega' _{p_{\sigma (t_1)}, q_{\sigma(t_1)}}< \omega' _{p_{\sigma (i)}, q_{\sigma(i)}}, \; i>t_1+m-1\},
$$
$$
\tau = \sup \{\lambda\in [0, \, 1]:\; (1-\lambda)/\overline{\theta}(\overline{\alpha}) + \lambda/ \overline{\theta}(\overline{\gamma})\in D\}.
$$
Then $\tau>0$. We define the vector $\overline{r}=(r_1, \, \dots, \, r_d)$ by the equation
\begin{align}
\label{r_vect_defin} \frac{1}{\overline{r}} = \frac{1-\tau}{\overline{\theta}(\overline{\alpha})} + \frac{\tau}{\overline{\theta}(\overline{\gamma})}.
\end{align}
Then $1/\overline{r}\in \tilde Z$. Hence
\begin{align}
\label{r_om_eq} \omega'_{r_{\sigma(t_1)},q_{\sigma(t_1)}} = \omega'_{r_{\sigma(t_1+1)},q_{\sigma(t_1+1)}} = \dots = \omega'_{r_{\sigma(t_1+m-1)},q_{\sigma(t_1+m-1)}} \in [0, \, 1]
\end{align}
(the last inclusion follows from the definition of $D$).

We show that $\overline{r}\in \Theta$. Indeed, if $\tau = 1$, then $\overline{r}=\overline{\theta}(\overline{\gamma})$; here ${\rm aff}\, Z$ and ${\rm aff}\, \tilde \Delta$ are complementary and $1/\overline{\theta}(\overline{\gamma})$ lies in the interior of $\tilde \Delta$. We prove that $\omega'_{r_{\sigma(t_1)},q_{\sigma(t_1)}} \in (0, \, 1)$. Indeed, if $\omega' _{r_{\sigma(t_1)},q_{\sigma(t_1)}}$ is 0 or 1, then $r_{\sigma(t_1)}=q_{\sigma(t_1)}, \, \dots, \,  r_{\sigma(t_1+m-1)}=q_{\sigma(t_1+m-1)}$ or $r_{\sigma(t_1)}=2, \, \dots , \, r_{\sigma(t_1+m-1)}=2$; i.e., $1/\overline{\theta}(\overline{\gamma})$ belongs to some set from $\tilde{\cal Z}_{m+1}$; this contradicts to assertion 2 of Definition \ref{gen_pos}. Hence we proved the inclusion $\overline{r}\in \Theta$ for $\tau=1$.

Let $0<\tau <1$. Then $1/\overline{r}\in {\rm conv}\, \{1/\overline{p}_{\alpha_1}, \, \dots, \, 1/\overline{p}_{\alpha_m}, \, 1/\overline{p}_{\beta}\}$ and \eqref{r_om_eq} holds; in addition, one of the following conditions holds: 1) $r_{\sigma(t_1)}=q_{\sigma(t_1)}, \, \dots, \,  r_{\sigma(t_1+m-1)}=q_{\sigma(t_1+m-1)}$, 2) $r_{\sigma(t_1)}=2, \, \dots, \, r_{\sigma(t_1+m-1)}=2$, 3) besides \eqref{r_om_eq}, one additional equality $\omega'_{r_{\sigma(t_1)},q_{\sigma(t_1)}} = \omega'_{r_{\sigma(i)},q_{\sigma(i)}}$ holds, where $i\notin \{t_1, \, \dots, \, t_1+m-1\}$. Applying assertions 1, 2 of Definition \ref{gen_pos}, we get that $\overline{r}\in \Theta$.

This together with \eqref{psi_def}, \eqref{psi_eq_nu_phi} implies \eqref{789}.

From the definition of the set $D$, Remark \ref{t1t1_pr}, \eqref{phi_case1}, \eqref{phi_case3}, \eqref{sta4n} and \eqref{r_om_eq}, we get
\begin{align}
\label{phi_alpha_gamma}
\begin{array}{c}
\Phi(\overline{\theta}(\overline{\alpha}), \, \overline{q}, \, \overline{k}, \, n) = \prod _{i=1}^{t_1-1} k_{\sigma(i)} ^{1/q_{\sigma(i)} - 1/\theta_{\sigma(i)}(\overline{\alpha})} \Bigl(n^{1/2}\prod _{i=1}^{t_1-1} k_{\sigma(i)} ^{-1/2}\prod _{i=t_1}^d k_{\sigma(i)}^{-1/q_{\sigma(i)}}\Bigr) ^{\frac{1/q_{\sigma(t_1)} - 1/\theta_{\sigma(t_1)}(\overline{\alpha})}{1/2-1/q_{\sigma(t_1)}}},
\\
\Phi(\overline{r}, \, \overline{q}, \, \overline{k}, \, n) = \prod _{i=1}^{t_1-1} k_{\sigma(i)} ^{1/q_{\sigma(i)} - 1/r_{\sigma(i)}} \Bigl(n^{1/2}\prod _{i=1}^{t_1-1} k_{\sigma(i)} ^{-1/2}\prod _{i=t_1}^d k_{\sigma(i)}^{-1/q_{\sigma(i)}}\Bigr) ^{\frac{1/q_{\sigma(t_1)} - 1/r_{\sigma(t_1)}}{1/2-1/q_{\sigma(t_1)}}}.
\end{array}
\end{align}

We set
\begin{align}
\label{s_pr_i_nii}
s'_{\sigma(i)} =s_{\sigma(i)} \stackrel{\eqref{s_def2}}{=} k_{\sigma(i)} \text{ for }1\le i\le t_1-1, \; s'_{\sigma(i)} =s_{\sigma(i)} \stackrel{\eqref{s_def2}}{=} 1\text{ for }t_1+m\le i \le d.
\end{align}
For $t_1\le i\le t_1+m-1$ we set
\begin{align}
\label{s_pr_def_ai}
s'_{\sigma(i)} = \prod _{k=1}^m A_{k,\sigma(i)} ^{\tau_k'},
\end{align}
where $A_{k,\sigma(i)}$ are given by formula \eqref{a_ti_def}, and the numbers $\tau_k'$ are defined by the equations
\begin{align}
\label{tau_pr_syst}
\frac{\nu_{\gamma_j}}{\nu_{\gamma_1}} = \prod _{i=1} ^d (s'_i) ^{1/p_{\gamma_j,i} - 1/p_{\gamma_1,i}}, \; 2\le j \le m, \; \sum \limits _{k=1}^m \tau'_k=1.
\end{align}
Taking the logarithm of both parts of first $m-1$ equations and excluding $\tau_1'$, we get the system of linear equations for $\tau_2', \, \dots, \, \tau_m'$ with the matrix
$$
\Bigl(\sum \limits _{i=t_1}^{t_1+m-1} (1/p_{\gamma_j, \sigma(i)} - 1/p_{\gamma_1,\sigma(i)})(\log A_{k,\sigma(i)} - \log A_{1,\sigma(i)})\Bigr) _{2\le k\le m, \, 2\le j\le m}.
$$
By generality of position (see assertion 3 of Definition \ref{gen_pos}) and Corollary \ref{a_matr_prop}, this matrix is nondegenerated; hence $\tau_2', \, \dots, \, \tau_m'$ are well-defined.

The analogue of Proposition \ref{sta_nu12} holds: if $\mu_{j,k}\ge 0$, $1\le j\le m$, $k=1, \, 2$, $\sum \limits _{j=1}^m \mu_{j,k}=1$, the vectors $\overline{\theta}_k = (\theta_{k,1}, \, \dots, \, \theta_{k,d})$ and the numbers $\nu_{(k)}$ are defined by the equations
\begin{align}
\label{1theta_k22}
\frac{1}{\overline{\theta}_k} =\sum \limits _{j=1}^m \frac{\mu_{j,k}}{\overline{p}_{\gamma_j}}, \quad \nu_{(k)} = \prod _{j=1}^m \nu_{\gamma_j}^{\mu_{j,k}}, \quad k=1, \, 2,
\end{align}
then
\begin{align}
\label{nu1_nu222} \frac{\nu_{(1)}}{\nu_{(2)}} =  \prod _{l=1}^d (s'_l)^{1/\theta_{1,l}-1/\theta_{2,l}}.
\end{align}

From \eqref{789} and \eqref{phi_alpha_gamma} we get
$$
\nu_{\alpha_1}^{\lambda_1(\overline{\alpha})} \dots \nu_{\alpha_m}^{\lambda_m(\overline{\alpha})} \prod _{i=1}^{t_1-1} k_{\sigma(i)} ^{1/q_{\sigma(i)} - 1/\theta_{\sigma(i)}(\overline{\alpha})} \Bigl(n^{1/2}\prod _{i=1}^{t_1-1} k_{\sigma(i)} ^{-1/2}\prod _{i=t_1}^d k_{\sigma(i)}^{-1/q_{\sigma(i)}}\Bigr) ^{\frac{1/q_{\sigma(t_1)} - 1/\theta_{\sigma(t_1)}(\overline{\alpha})}{1/2-1/q_{\sigma(t_1)}}}\le
$$
$$
\le \nu_{\alpha_1}^{(1-\tau)\lambda_1(\overline{\alpha}) + \tau \lambda_1(\overline{\gamma})} \dots \nu_{\alpha_{m-1}}^{(1-\tau)\lambda_{m-1}(\overline{\alpha}) + \tau \lambda_{m-1}(\overline{\gamma})}\nu_{\alpha_m}^{(1-\tau)\lambda_m(\overline{\alpha})} \nu_\beta^{\tau \lambda_m(\overline{\gamma})}\times $$$$ \times\prod _{i=1}^{t_1-1} k_{\sigma(i)} ^{1/q_{\sigma(i)} - 1/r_{\sigma(i)}}  \Bigl(n^{1/2}\prod _{i=1}^{t_1-1} k_{\sigma(i)} ^{-1/2}\prod _{i=t_1}^d k_{\sigma(i)}^{-1/q_{\sigma(i)}}\Bigr) ^{\frac{1/q_{\sigma(t_1)} - 1/r_{\sigma(t_1)}}{1/2-1/q_{\sigma(t_1)}}};
$$
this together with \eqref{r_vect_defin} yields
$$
\nu_{\alpha_1}^{\lambda_1(\overline{\alpha})} \dots \nu_{\alpha_m}^{\lambda_m(\overline{\alpha})} \prod _{i=1}^{t_1-1} k_{\sigma(i)} ^{1/q_{\sigma(i)} - 1/\theta_{\sigma(i)}(\overline{\alpha})} \Bigl(n^{1/2}\prod _{i=1}^{t_1-1} k_{\sigma(i)} ^{-1/2}\prod _{i=t_1}^d k_{\sigma(i)}^{-1/q_{\sigma(i)}}\Bigr) ^{\frac{1/q_{\sigma(t_1)} - 1/\theta_{\sigma(t_1)}(\overline{\alpha})}{1/2-1/q_{\sigma(t_1)}}}\le
$$
$$
\le \nu_{\gamma_1}^{\lambda_1(\overline{\gamma})} \dots \nu_{\gamma_m}^{\lambda_m(\overline{\gamma})} \prod _{i=1}^{t_1-1} k_{\sigma(i)} ^{1/q_{\sigma(i)} - 1/\theta_{\sigma(i)}(\overline{\gamma})}  \Bigl(n^{1/2}\prod _{i=1}^{t_1-1} k_{\sigma(i)} ^{-1/2}\prod _{i=t_1}^d k_{\sigma(i)}^{-1/q_{\sigma(i)}}\Bigr) ^{\frac{1/q_{\sigma(t_1)} - 1/\theta_{\sigma(t_1)}(\overline{\gamma})}{1/2-1/q_{\sigma(t_1)}}}.
$$
From \eqref{nu1_nu2} and \eqref{nu1_nu222} it follows that for each $j\in \{1, \, \dots, \, m-1\}$ we have
$$
\prod _{i=1}^d s_{\sigma(i)} ^{1/\theta_\sigma(i) (\overline{\alpha}) -1/p_{\alpha_j,\sigma(i)}} \prod _{i=1}^{t_1-1} k_{\sigma(i)} ^{1/q_{\sigma(i)} - 1/\theta_{\sigma(i)}(\overline{\alpha})} \times$$$$ \times\Bigl(n^{1/2}\prod _{i=1}^{t_1-1} k_{\sigma(i)} ^{-1/2}\prod _{i=t_1}^d k_{\sigma(i)}^{-1/q_{\sigma(i)}}\Bigr) ^{\frac{1/q_{\sigma(t_1)} - 1/\theta_{\sigma(t_1)}(\overline{\alpha})}{1/2-1/q_{\sigma(t_1)}}}\le
$$
$$
\le \prod _{i=1}^d (s'_{\sigma(i)}) ^{1/\theta_{\sigma(i)}(\overline{\gamma}) -1/p_{\alpha_j,\sigma(i)}} \prod _{i=1}^{t_1-1} k_{\sigma(i)} ^{1/q_{\sigma(i)} - 1/\theta_{\sigma(i)}(\overline{\gamma})} \times $$$$ \times \Bigl(n^{1/2}\prod _{i=1}^{t_1-1} k_{\sigma(i)} ^{-1/2}\prod _{i=t_1}^d k_{\sigma(i)}^{-1/q_{\sigma(i)}}\Bigr) ^{\frac{1/q_{\sigma(t_1)} - 1/\theta_{\sigma(t_1)}(\overline{\gamma})}{1/2-1/q_{\sigma(t_1)}}}.
$$
From \eqref{s_def2} and \eqref{s_pr_i_nii} we get
$$
\prod _{i=1}^{t_1-1} k_{\sigma(i)} ^{1/q_{\sigma(i)} - 1/p_{\alpha_{j},\sigma(i)}} \prod _{i=t_1}^{t_1+m-1} s_{\sigma(i)} ^{1/q_{\sigma(i)} - 1/p_{\alpha_{j},\sigma(i)}} \prod _{i=t_1}^{t_1+m-1} s_{\sigma(i)} ^{1/\theta_{\sigma(i)}(\overline{\alpha})-1/q_{\sigma(i)}} \times
$$
$$
\times \Bigl(n^{1/2}\prod _{i=1}^{t_1-1} k_{\sigma(i)} ^{-1/2}\prod _{i=t_1}^d k_{\sigma(i)}^{-1/q_{\sigma(i)}}\Bigr) ^{\frac{1/q_{\sigma(t_1)} - 1/\theta_{\sigma(t_1)}(\overline{\alpha})}{1/2-1/q_{\sigma(t_1)}}} \le
$$
$$
\le \prod _{i=1}^{t_1-1} k_{\sigma(i)} ^{1/q_{\sigma(i)} - 1/p_{\alpha_{j},\sigma(i)}} \prod _{i=t_1}^{t_1+m-1} (s'_{\sigma(i)}) ^{1/q_{\sigma(i)} - 1/p_{\alpha_{j},\sigma(i)}} \prod _{i=t_1}^{t_1+m-1} (s'_{\sigma(i)}) ^{1/\theta_{\sigma(i)}(\overline{\gamma})-1/q_{\sigma(i)}} \times
$$
$$
\times \Bigl(n^{1/2}\prod _{i=1}^{t_1-1} k_{\sigma(i)} ^{-1/2}\prod _{i=t_1}^d k_{\sigma(i)}^{-1/q_{\sigma(i)}}\Bigr) ^{\frac{1/q_{\sigma(t_1)} - 1/\theta_{\sigma(t_1)}(\overline{\gamma})}{1/2-1/q_{\sigma(t_1)}}}.
$$

Recall that \eqref{prod_si_q_th} holds. Similarly from \eqref{pr_aj_12q}, \eqref{z_omega}, \eqref{th_g_z}, \eqref{s_pr_def_ai} we get
$$
\prod _{i=t_1}^{t_1+m-1} (s'_{\sigma(i)}) ^{1/\theta_{\sigma(i)}(\overline{\gamma}) -1/q_{\sigma(i)}} = 
 \Bigl(n^{1/2}\prod _{i=1}^{t_1-1} k_{\sigma(i)} ^{-1/2}\prod _{i=t_1}^d k_{\sigma(i)}^{-1/q_{\sigma(i)}}\Bigr) ^{\frac{1/\theta_{\sigma(t_1)}(\overline{\gamma})-1/q_{\sigma(t_1)}}{1/2-1/q_{\sigma(t_1)}}}.
$$
Hence
\begin{align}
\label{pr_t_sqpaj}
\prod _{i=t_1}^{t_1+m-1} s_{\sigma(i)} ^{1/q_{\sigma(i)} - 1/p_{\alpha_{j},\sigma(i)}} \le \prod _{i=t_1}^{t_1+m-1} (s'_{\sigma(i)}) ^{1/q_{\sigma(i)} - 1/p_{\alpha_{j},\sigma(i)}}.
\end{align}
We show that 
\begin{align}
\label{pr_t_s2paj}
\prod _{i=t_1}^{t_1+m-1} s_{\sigma(i)} ^{1/\theta_{\sigma(i)}(\overline{\gamma}) - 1/p_{\alpha_{j},\sigma(i)}} \le \prod _{i=t_1}^{t_1+m-1} (s'_{\sigma(i)}) ^{1/\theta_{\sigma(i)}(\overline{\gamma}) - 1/p_{\alpha_{j},\sigma(i)}}.
\end{align}
Indeed, from \eqref{zi_2q} and \eqref{th_g_z} it follows that $(1/\overline{\theta}(\overline{\gamma}))_I = (1/\overline{q})_I + \kappa ((1/\overline{2})_I- (1/\overline{q})_I)$ for some $\kappa \in \R$. Hence, by \eqref{pr_t_sqpaj},
it suffices to check the equalities
$$
\prod _{i=t_1}^{t_1+m-1} s_{\sigma(i)} ^{1/2-1/q_{\sigma(i)}}=\prod _{i=t_1}^{t_1+m-1} (s'_{\sigma(i)}) ^{1/2-1/q_{\sigma(i)}} = n^{1/2} \prod _{i=1} ^{t_1-1} k_{\sigma(i)} ^{-1/2} \prod _{i=t_1} ^{d} k_{\sigma(i)} ^{-1/q_{\sigma(i)}}.
$$
It follows from \eqref{s_def4}, \eqref{pr_aj_12q} and \eqref{s_pr_def_ai}.

From \eqref{pr_t_s2paj} we get that for all $\varkappa_j\ge 0$ ($1\le j\le m-1$) such that $\sum \limits _{j=1}^{m-1} \varkappa _j=1$, the following inequality holds:
\begin{align}
\label{ggggg}
\prod _{i=t_1}^{t_1+m-1} s_{\sigma(i)} ^{1/\theta_{\sigma(i)}(\overline{\gamma}) - \sum \limits_{j=1}^{m-1}\varkappa_j/p_{\alpha_{j},\sigma(i)}} \le \prod _{i=t_1}^{t_1+m-1} (s'_{\sigma(i)}) ^{1/\theta_{\sigma(i)}(\overline{\gamma}) - \sum \limits_{j=1}^{m-1}\varkappa_j/p_{\alpha_{j},\sigma(i)}}.
\end{align}

We prove that there are $\varkappa_j\ge 0$ ($1\le j\le m-1$) such that $\sum \limits _{j=1}^{m-1} \varkappa _j=1$ and the vectors $\Bigl(1/\overline{\theta}(\overline{\gamma}) - \sum \limits_{j=1}^{m-1}\varkappa_j/\overline{p}_{\alpha_j}\Bigr)_I$ and $\Bigl(1/\overline{p}_\beta - \sum \limits_{j=1}^{m-1}\varkappa_j/\overline{p}_{\alpha_j}\Bigr)_I$ are codirectional.

Indeed, by the definition of $\overline{\gamma}$ and $\overline{\theta}(\overline{\gamma})$, we have $(1/\overline{\theta}(\overline{\gamma}))'_I =(1/\overline{\theta}(\overline{\alpha}))'_I \in {\rm int}\, \tilde \Delta'_I$ (recall that $\tilde \Delta = {\rm conv} \{1/\overline{p}_{\gamma_j}\}_{j=1}^m$, $(\gamma_1, \, \dots, \, \gamma_{m-1}, \, \gamma_m) = (\alpha_1, \, \dots, \, \alpha_{m-1}, \, \beta)$). Let $\Delta_m = {\rm conv}\, \{1/\overline{p}_{\alpha_j}\}_{j=1}^{m-1}$. Consider the ray emanating from $(1/\overline{p}_{\beta})'_I$ and passing through $(1/\overline{\theta}(\overline{\gamma}))'_I$. It intersects with $(\Delta_m)'_I := \{(\overline{x})'_I:\; \overline{x}\in \Delta_m\}$ at a point $\overline{\xi}$. The numbers $\varkappa_j$ ($0\le j\le m$) are defined from the equation $\overline{\xi} = \sum \limits _{j=1}^{m-1} \varkappa_j (1/\overline{p}_{\alpha_j})'_I$.

We have $\overline{\xi}= (\overline{a})'_I$ for $\overline{a} := \sum \limits _{j=1}^{m-1} \varkappa_j/\overline{p}_{\alpha_j}\in \Delta_m$. Then
$$
(1/\overline{\theta}(\overline{\gamma}))_I\in [(1/\overline{p}_{\beta})_I, \, (\overline{a})_I],
$$
and $(1/\overline{\theta}(\overline{\gamma}))_I \ne (\overline{a})_I$.

Hence, from \eqref{ggggg} we get
$$
\prod _{i=t_1}^{t_1+m-1} s_{\sigma(i)} ^{1/p_{\beta,\sigma(i)}- \sum \limits_{j=1}^{m-1}\varkappa_j/p_{\alpha_{j},\sigma(i)}} \le \prod _{i=t_1}^{t_1+m-1} (s'_{\sigma(i)}) ^{1/ p_{\beta,\sigma(i)}- \sum \limits_{j=1}^{m-1}\varkappa_j/p_{\alpha_{j},\sigma(i)}};
$$
this together with \eqref{s_pr_i_nii} implies that
$$
\prod _{i=1}^{d} s_{\sigma(i)} ^{1/p_{\beta,\sigma(i)}- \sum \limits_{j=1}^{m-1}\varkappa_j/p_{\alpha_{j},\sigma(i)}} \le \prod _{i=1}^{d} (s'_{\sigma(i)}) ^{1/ p_{\beta,\sigma(i)}- \sum \limits_{j=1}^{m-1}\varkappa_j/p_{\alpha_{j},\sigma(i)}}.
$$
Therefore,
$$
\prod _{i=1}^{d} s_{\sigma(i)} ^{1/\theta_{\sigma(i)}(\overline{\alpha}) - \sum \limits_{j=1}^{m-1}\varkappa_j/p_{\alpha_{j},\sigma(i)}}\prod _{i=1}^{d} s_{\sigma(i)} ^{1/p_{\beta,\sigma(i)}- 1/\theta_{\sigma(i)}(\overline{\alpha})} \le $$$$\le \prod _{i=1}^{d} (s'_{\sigma(i)}) ^{1/ p_{\beta,\sigma(i)}- \sum \limits_{j=1}^{m-1}\varkappa_j/p_{\alpha_{j},\sigma(i)}}.
$$
Applying \eqref{nu1_nu2} and \eqref{nu1_nu222}, we get
$$
\frac{\nu_{\alpha_1}^{\lambda_1(\overline{\alpha})}\dots \nu_{\alpha_m}^{\lambda_m(\overline{\alpha})}}{\nu_{\alpha_1}^{\varkappa_1} \dots \nu_{\alpha_{m-1}}^{\varkappa_{m-1}}} \prod _{i=1}^{d} s_{\sigma(i)} ^{1/p_{\beta,\sigma(i)}- 1/\theta_{\sigma(i)}(\overline{\alpha})} \le \frac{\nu_\beta}{\nu_{\alpha_1}^{\varkappa_1} \dots \nu_{\alpha_{m-1}}^{\varkappa_{m-1}}};
$$
i.e., \eqref{incl_ineq11} holds.

\section{The estimate in the general case}

{\bf Step 1.} First we prove that the assertion of Theorem \ref{main} holds if the set $A$ is finite, $q_i>2$, $i=1, \, \dots, \, d$, and $\{(\nu_\alpha, \, \overline{p}_\alpha)\}_{\alpha\in A}$ are arbitrary.

{\bf Substep 1.1.} Let $\{\overline{p}_{\alpha}\}_{\alpha\in A}$ satisfy conditions 1, 2, 3 of Definition \ref{gen_pos}. Consider the right-hand side of \eqref{dn_low_est_mod}. If the sets $Z$, $Z'\in \tilde{\cal Z}_m$ are different, $\overline{\alpha}\in \tilde{\cal N}_m$ and the numbers $\lambda_j(\overline{\alpha}, \, Z)$, $\lambda_j(\overline{\alpha}, \, Z')$ are well-defined, then $(\lambda_1(\overline{\alpha}, \, Z), \, \dots, \, \lambda_m(\overline{\alpha}, \, Z))$ and $(\lambda_1(\overline{\alpha}, \, Z'), \, \dots, \, \lambda_m(\overline{\alpha}, \, Z'))$ are different: otherwise, ${\rm conv}\{1/\overline{p}_{\alpha_j}\}_{j=1}^m$ intersects with $Z\cap Z'$, which contradicts to assertion 2 of Definition \ref{gen_pos}.

So, by slightly changing the numbers $\nu_\alpha$ we can satisfy condition 4 of Definition \ref{gen_pos}; with this proviso, the values of both parts in the order equality \eqref{main_eq} will change by at most a factor 2.

\vskip 0.3cm

Now  let $\{(\nu_\alpha, \, \overline{p}_\alpha)\}_{\alpha\in A}$ be arbitrary. For fixed $\nu_\alpha$, we will change the points $\overline{p}_{\alpha}$ so as to eventually satisfy conditions 1, 2, 3 of Definition \ref{gen_pos}. At each step, only one point is translated, and the distance of this translation can be made arbitrarily small. (The number of such translastions is bounded from above by the number depending only on $\# A$ and $d$.)

{\bf Substep 1.2.} First we translate the points so that the new set of points will satisfy condition 1. We make induction on $m$.

Let $m=2$, $I\subset \{1, \, \dots, \, d\}$, $\# I = 1$, $\alpha_1$, $\alpha_2\in A$, $\alpha_1\ne \alpha_2$. Then $I=\{i_*\}$. The condition of affine dependence of $\{(1/\overline{p}_{\alpha_1})_I, \, (1/\overline{p}_{\alpha_2})_I\}$ is equivalent to the equality $1/p_{\alpha_1,i_*} = 1/p_{\alpha_2,i_*}$. It suffices to  change $1/p_{\alpha_2,i_*}$, and the translation may be arbitrarily small. Going through all singletons $I$ and all pairs of different indices from $A$, we obtain a new set of points, for which assertion 1 of Definition \ref{gen_pos} holds for $m=2$.

Let a new point set be obtained, for which assertion 1 holds for all $m\le l-1$. We make the induction step from $l-1$ to $l$.

Let $3\le l\le d+1$, $\# I = l-1$, let $\{\alpha_j\}_{j=1}^l\subset A$ be different points, and let $\{(1/\overline{p}_{\alpha_j})_I\}_{j=1}^l$ be affinely dependent. Consider a set $I' \subset I$, $\# I' = l-2$. By the induction hypothesis, the points $\{(1/\overline{p}_{\alpha_j})_{I'}\}_{j=1}^{l-1}$ are affinely independent. Hence, $\{(1/\overline{p}_{\alpha_j})_I\}_{j=1}^{l-1}$ are affinely independent. Therefore, we can translate $(1/\overline{p}_{\alpha_l})_I$ by an arbitrarily small vector, so that the new point set will be affinely independent.

{\bf Substep 1.3.} We now make successive small translations of the points $1/\overline{p}_\alpha$ so as to satisfy condition 2 of Definition \ref{gen_pos}. Let $Z$ be a plane given by condition \eqref{pl_tr_simpl}, and let $\beta_1, \, \dots, \, \beta_m\in A$ be different indices. Then $$Z = \xi_0 + {\rm span}\, \{\overline{v}_j\}_{j=1}^{d-m+1},$$
where $\{\overline{v}_j\}_{j=1}^{d-m+1}$ is the system of linearly independent vectors. Let $\overline{w}_j= 1/\overline{p}_{\beta_j}-1/\overline{p}_{\beta_1}$, $2\le j\le m$. Changing if necessary the numeration, we may without loss of generality assume that for some $j_0\in \{2, \, \dots, \, m+1\}$ the system $E:= \{\overline{v}_j\}_{j=1}^{d-m+1} \cup \{\overline{w}_j\}_{j=2}^{j_0-1}$ is linearly independent, and $E\cup \{\overline{w}_j\}$ is linearly dependent for each $j\in \{j_0, \, \dots, \, m\}$. If $j_0\le m$, then by an arbitrarily small translations  of $1/\overline{p}_{\beta_{j_0}}$ we can achieve that $E\cup \{\overline{w}_{j_0}\}$ would be linearly independent. After that we similarly change $1/\overline{p}_{\beta_{j_0+1}}$, and so on.

Making successive translations for each $m\in \{2, \, \dots, \, d+1\}$, for each $d+m-1$-dimensional plane $Z$ defined by condition \eqref{pl_tr_simpl}, and for each tuple of different indices $\beta_1, \, \dots, \, \beta_m\in A$, we satisfy the first part of item 2 of Definition \ref{gen_pos}.

Let now $k<m$, $\beta_1, \, \dots, \, \beta_k\in A$. If $Z$ and ${\rm conv}\, \{1/\overline{p}_{\beta_j}\}_{j=1}^k$ intersect, then $Z\cup {\rm conv}\, \{1/\overline{p}_{\beta_j}\}_{j=1}^k\subset V$, where $V$ is a $d-1$-dimensional plane. Hence $V$ divides $\R^d$ into two open halfspaces $\Pi_+$ and $\Pi_-$. Performing in succession arbitrarily small translations of  the points $1/\overline{p}_{\beta_j}$, we will make sure that $1/\overline{p}_{\beta_j}\in \Pi_+$ for all $j=1, \, \dots, \, k$; in this case, $Z\cap {\rm conv}\, \{1/\overline{p}_{\beta_j}\}_{j=1}^k$ will become empty. Next, we make such translations for all $m\in \{2, \, \dots, \, d+1\}$, $k<m$, $Z$ and $\{1/\overline{p}_{\beta_j}\}_{j=1}^k$, after which
we satisfy item 2 of Definition \ref{gen_pos}.

{\bf Substep 1.4.} We will now move the points so as to satisfy item 3 of Definition \ref{gen_pos}. Let ${\cal B} = (B_{j,i})_{2\le j\le m, \, i\in I} \in \hat {\cal M} _{m,I}$, and let $\beta_1, \, \dots, \, \beta_m\in A$ be different indices. Item 1 of Definition \ref{gen_pos} is already satisfied (see Substep 1.2); hence the points $\{(1/\overline{p}_{\beta_j})_I\} _{j=1}^m$ are affinely independent. Therefore, the vectors $\overline{v}_j:= (1/\overline{p}_{\beta_j}-1/\overline{p}_{\beta_1})_I$, $2\le j\le m$, are linearly independent. Recall that $\# I=m$. The matrix ${\cal B}$ has range $m-1$; hence, there are $i_*\in I$ and numbers $\lambda_t$, $t\in I':= I\backslash \{i_*\}$, such that $B_{j,i_*} = \sum \limits _{t\in I'} \lambda _t B_{j,t}$, $2\le j\le m$, and the matrix $(B_{j,t})_{2\le j\le m, \, t\in I'}$ is nondegenerate. 

Let $\overline{v}_k = (v_{k,i})_{i\in I}$.
Then
$$
\sum \limits _{i\in I} v_{k,i} B_{j,i} = \sum \limits _{i\in I'} \Bigl(v_{k,i} +\lambda_i v_{k,i_*}\Bigr) B_{j,i}.
$$
The matrices $W:=\Bigl(v_{k,i} + \lambda_i v_{k,i_*}\Bigr)_{i\in I',2\le k\le m}$ and ${\cal B}'=(B_{j,i})_{2\le j\le m, \, i\in I'}$ are square, and ${\cal B}'$ is nondegenerate. So, by small translations of the points $1/\overline{p}_{\beta_j}$ we will make sure that the matrix $W$ would be nondegenerate, i.e.,
the vectors $$\overline{w}_k:=\Bigl(v_{k,i} +\lambda_i v_{k,i_*}\Bigr)_{i\in I'}, \quad 2\le k\le m,$$ would be linearly independent.

Consider the operator $Q$ associating with the vector $\overline{u} = (u_i)_{i\in I}$ the vector whose $i$th coordinate is $u_i+ \lambda_i u_{i_*}$ for $i\in I'$, and the  coordinate with number $i_*$ is $0$. Then $Q$ is the projection onto the subspace $\{(u_i)_{i\in I}:\; u_{i_*}=0\}$ along the one-dimensional subspace $\Lambda = {\rm span}\, \{(\mu_i)_{i\in I}\}$, where $\mu_i=\lambda_i$ for $i\in I'$, $\mu_{i_*}=-1$.

So, our aim is to make the vectors $\{Q\overline{v}_k\}_{2\le k\le m}$ linearly dependent by successive small translations of the points $1/\overline{p}_{\beta_j}$.
This will be the case if the only common point of $\Lambda$ and $L:={\rm span}\, \{\overline{v}_k\}_{2\le k\le m}$ is the origin.

Let $\Lambda$ and $L$ have a nontrivial intersection. Then $\Lambda \subset L$.

We set $Z_j={\rm span}\, \{\overline{v}_k\}_{2\le k\le m, \, k\ne j}$, $j=2, \, \dots, \, m$. Then $\cap _{j=2}^m Z_j=\{0\}$; hence there is $j_*\in \{2, \, \dots, \, m\}$ such that $Z_{j_*} \cap \Lambda = \{0\}$.

For any $\varepsilon>0$ there is a point $1/\overline{p}_{\beta_{j_*}}^\varepsilon$ such that $|1/\overline{p}_{\beta_{j_*}}^\varepsilon - 1/\overline{p}_{\beta_{j_*}}| < \varepsilon$ and $\overline{v}_{j_*}^\varepsilon := 1/\overline{p}_{\beta_{j_*}}^\varepsilon - 1/\overline{p}_{\beta_1}\notin L$. 

Let $L^\varepsilon = {\rm span}\, (Z_{j_*} \cup \{\overline{v}_{j_*}^\varepsilon\})$. Then $L\cap L^\varepsilon = Z_{j_*}$. Hence $\Lambda \cap L^\varepsilon = \Lambda \cap L\cap L^\varepsilon= \Lambda \cap Z_{j_*}=\{0\}$.

By successive arbitrarily small translations over all matrices ${\cal B}\in \hat{\cal M}_{m,I}$, $2\le m\le d+1$, $\# I=m$, and systems of points $\{1/\overline{p}_{\beta_j}\}_{j=1}^m$, we satisfy condition 3.

{\bf Substep 1.5.} We prove that estimate \eqref{main_eq} holds for arbitrary $\{(\nu_\alpha, \, \overline{p}_\alpha)\}_{\alpha \in A}$.

\begin{Lem}
\label{sdvig} Let $2\le m\le d+1$, $\{1/\overline{p}_j\}_{j=1}^{m-1} \subset [0, \, 1]^d$, $1/\overline{p}_m\in [0, \, 1]^d$, $1/\overline{p}_m^N\in [0, \, 1]^d$, $N\in \N$, $1/\overline{p}_m^N \underset{N\to \infty}{\to} 1/\overline{p}_m$, and let $\Lambda \subset \R^d$ be an affine plane of dimension $d-m+1$. Let $\Delta_N = {\rm conv}\, \{1/\overline{p}_1, \, \dots, \, 1/\overline{p}_{m-1}, \, 1/\overline{p}_m^N\}$, $\Delta = {\rm conv}\, \{1/\overline{p}_j\}_{j=1}^m$. Suppose that for each $N$ the simplex $\Delta_N$ is nondegenerate and intersects with $\Lambda$ an its interior point, and, in addition, the intersection point is unique. Then the simplex $\Delta$
is nondegenerate and intersects with $\Lambda$ at unique point.
\end{Lem}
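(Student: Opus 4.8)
The plan is to transfer everything to the quotient space $\R^d/W$, where $W$ is the direction subspace of $\Lambda$, and to extract from the interiority assumption a uniform lower bound that is stable under passing to the limit. The subtlety — and the heart of the argument — is that one cannot deduce the conclusion by simply letting $N\to\infty$ in the relations valid for each $\Delta_N$, since the complement of a linear subspace is not closed; the hypothesis on the relative interior must be used quantitatively.

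First I would normalize: set $v_j=1/\overline{p}_j-1/\overline{p}_1$ for $2\le j\le m-1$, $v_m^N=1/\overline{p}_m^N-1/\overline{p}_1$, $v_m=1/\overline{p}_m-1/\overline{p}_1$, and $U={\rm span}\,\{v_j\}_{j=2}^{m-1}$ (with $U=\{0\}$ when $m=2$), so that $v_m^N\to v_m$ while $U$ stays fixed. Nondegeneracy of $\Delta_N$ means $v_2,\dots,v_{m-1},v_m^N$ are linearly independent, so the direction of ${\rm aff}\,\Delta_N$ is $U\oplus\R v_m^N$, of dimension $m-1$; since $\dim\Lambda=d-m+1$, the fact that ${\rm aff}\,\Delta_N$ meets $\Lambda$ in exactly one point is equivalent to this direction being complementary to $W$, and that forces, for every $N$, the two purely linear facts $U\cap W=\{0\}$ and $v_m^N\notin U+W$. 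Letting $\pi:\R^d\to\R^d/W$ be the canonical projection, these say that $\pi|_U$ is injective, that $\pi(U)$ is an $(m-2)$-dimensional subspace of the $(m-1)$-dimensional space $\R^d/W$, and that $\pi(v_m^N)\notin\pi(U)$.

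Now I would bring in the interior point. Write the unique common point of ${\rm aff}\,\Delta_N$ and $\Lambda$ as $x_N=1/\overline{p}_1+\sum_{j=2}^{m-1}\lambda_j^N v_j+\lambda_m^N v_m^N$ with barycentric coordinates $\lambda_1^N,\dots,\lambda_m^N>0$, $\sum_j\lambda_j^N=1$; in particular $0<\lambda_m^N<1$. Fix $\xi\in\Lambda$ and put $\bar c=\pi(1/\overline{p}_1-\xi)$; the condition $x_N\in\Lambda$ reads $\bar c+\sum_{j=2}^{m-1}\lambda_j^N\pi(v_j)+\lambda_m^N\pi(v_m^N)=0$ in $\R^d/W$, and since $\lambda_m^N\pi(v_m^N)\notin\pi(U)$ this forces $\bar c\notin\pi(U)$. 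Choose a one-dimensional complement $Y$ of $\pi(U)$ in $\R^d/W$ and let $q:\R^d/W\to Y\cong\R$ be the projection along $\pi(U)$; applying $q$ annihilates all $\pi(v_j)$, $j\le m-1$, and leaves $q(\bar c)+\lambda_m^N q(\pi(v_m^N))=0$ with $q(\bar c)\ne0$ and $q(\pi(v_m^N))\ne0$. Thus $\lambda_m^N=-q(\bar c)/q(\pi(v_m^N))$, so $0<\lambda_m^N<1$ gives $|q(\pi(v_m^N))|>|q(\bar c)|>0$; letting $N\to\infty$ yields $|q(\pi(v_m))|\ge|q(\bar c)|>0$, that is, $v_m\notin U+W$.

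The conclusion then follows. Since $v_m\notin U+W\supseteq U$, the vectors $v_2,\dots,v_{m-1},v_m$ are linearly independent, so $\Delta$ is nondegenerate; and $U\cap W=\{0\}$ together with $v_m\notin U+W$ gives $(U\oplus\R v_m)\cap W=\{0\}$, so the direction of ${\rm aff}\,\Delta$ is complementary to $W$ and ${\rm aff}\,\Delta$ meets $\Lambda$ in a single point. It remains to see this point lies in $\Delta$: $\lambda_m^N$ converges, and since $\pi(v_2),\dots,\pi(v_{m-1})$ are linearly independent the identity $\sum_{j=2}^{m-1}\lambda_j^N\pi(v_j)=-\bar c-\lambda_m^N\pi(v_m^N)$ forces each $\lambda_j^N$ to converge to some $\lambda_j^\infty\ge0$ with $\sum_j\lambda_j^\infty=1$; hence $x_N\to x_\infty=\sum_{j=1}^{m-1}\lambda_j^\infty(1/\overline{p}_j)+\lambda_m^\infty(1/\overline{p}_m)\in\Delta$, and $x_\infty\in\Lambda$ because $\Lambda$ is closed, so $\Delta\cap\Lambda=\{x_\infty\}$. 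The one step I would double-check most carefully is the codimension count that identifies uniqueness of the intersection point with complementarity of the directions and guarantees $\dim Y=1$; the remaining manipulations are routine.
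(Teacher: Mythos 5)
Your proof is correct, and it takes a genuinely different route from the paper's. The paper argues by contradiction: if $\Delta$ were degenerate or met $\Lambda$ in more than one point, then $\Lambda\cup\Delta$ would lie in a $(d-1)$-dimensional plane $V$; if $1/\overline{p}_m^N\notin V$ then $\Delta_N\cap V$ reduces to the fixed facet ${\rm conv}\,\{1/\overline{p}_j\}_{j=1}^{m-1}$, which cannot meet $\Lambda$ (that would be a boundary intersection), so $\Delta_N\cap\Lambda=\varnothing$; if instead $1/\overline{p}_m^N\in V$ then $\Delta_N\subset V$ and, inside $V$, the dimensions of ${\rm aff}\,\Delta_N$ and $\Lambda$ sum to $d>\dim V$, so an intersection through an interior point of $\Delta_N$ must contain a segment — either way a contradiction. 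Your argument is instead direct and quantitative: passing to the quotient $\R^d/W$ and projecting along $\pi(U)$ onto a line, you convert the interior-point hypothesis $0<\lambda_m^N<1$ into the uniform bound $|q(\pi(v_m^N))|>|q(\bar c)|>0$ with $\bar c$ fixed, which survives the limit and yields $v_m\notin U+W$; this correctly circumvents the obstacle that transversality is an open, not closed, condition. What your approach buys in addition is the explicit convergence of the barycentric coordinates and of the intersection points $x_N\to x_\infty\in\Delta\cap\Lambda$, which is in the spirit of how the lemma is actually applied (to the continuity of $\Psi_0$); what the paper's buys is brevity. The step you flag — that a unique intersection point lying in the relative interior of $\Delta_N$ forces ${\rm aff}\,\Delta_N\cap\Lambda$ to be a single point, hence the directions to be complementary by the dimension count — is indeed the one that needs the interior-point hypothesis (a segment of $\Lambda$ through an interior point would otherwise stay in $\Delta_N$), and it goes through exactly as you expect.
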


\begin{proof}
Since $\Delta_N\cap \Lambda \ne \varnothing$ for each $N\in \N$, we have $\Delta \cap \Lambda \ne \varnothing$.

If $\Delta$ is degenerate or intersects with $\Lambda$ at two or more points, then $\Lambda \cup \Delta \subset V$, where $V\subset \R^d$ is a $d-1$-dimensional plane.

Notice that ${\rm conv}\, \{1/\overline{p}_j\}_{j=1}^{m-1}$ does not intersect with $\Lambda$; otherwise, $\Delta_N$ intersects with $\Lambda$ at its relative boundary point, which contradicts to conditions of the Lemma.

If $1/\overline{p}_m^N \notin V$, then $\Delta_N \cap V = {\rm conv}\, \{1/\overline{p}_j\}_{j=1}^{m-1}$; hence $\Delta_N \cap \Lambda=\varnothing$; we arrive to a contradiction.

Therefore, $1/\overline{p}_m^N \in V$; hence $\Delta_N \subset V$. We get that the $m-1$-dimensional simplex $\Delta_N$ and $d-m+1$-dimensional plane $\Lambda$ are contained in a $d-1$-dimensional plane and intersect at the relative interior point of $\Delta_N$. Thus, the set of intersection points has dimension at least 1; we again arrive to a contradiction.
\end{proof}

Denote by $\Psi_0(\{\overline{p}_\alpha\}_{\alpha \in A}, \, \overline{q}, \, \overline{k}, \, n)$ the right-hand side of \eqref{main_eq}.

First we notice that the function $[0, \, 1]^d\ni \overline{x}\stackrel{\varphi}{\mapsto} \Phi(1/\overline{x}, \, \overline{q}, \, \overline{k}, \, n)$ is continuous; indeed, $[0, \, 1]^d$ can be covered by a finite family of closed subsets $D_j$ $(1\le j\le j_0)$ with the following property: for each $j\in \{1, \, \dots, \, j_0\}$, the function $\varphi|_{D_j}$ is continuous (see formulas \eqref{phi_case10}, \eqref{phi_case20}, \eqref{phi_case30}). 

Let $\alpha \in A$, $\overline{p}_{\alpha}^N \in [1, \, \infty]^d$, $1/\overline{p}_{\alpha}^N \underset{N\to \infty}{\to} 1/\overline{p}_{\alpha}$. Given $\beta \ne \alpha$, we set $\overline{p}^N_{\beta} = \overline{p}_{\beta}$. The sets ${\cal N}_m^N$, the numbers $\lambda_j^N(\overline{\alpha}, \, Z)$ and the vectors $\overline{\theta}^N(\overline{\alpha}, \, Z)$ are defined according to Definition \ref{nm_def} for $\{\overline{p}^N_{\beta}\}_{\beta \in A}$. Then
\begin{align}
\label{psi_0_pnb}
\Psi_0(\{\overline{p}^N_{\beta}\}_{\beta\in A}, \, \overline{q}, \, \overline{k}, \, n) \underset{N\to \infty}{\to} \Psi_0(\{\overline{p}_{\beta}\}_{\beta\in A}, \, \overline{q}, \, \overline{k}, \, n).
\end{align}
The arguments are similar to \cite{vas_plq} (see also\cite{vas_mix_sev}), taking into account Lemma \ref{sdvig}. From \eqref{psi_0_pnb} and the above Substeps 1.2--1.4  it follows that we can make successive translations of the points $1/\overline{p}_\beta$ so as to eventually satisfy conditions 1--3 of Definition \ref{gen_pos}, and $\Psi_0(\{\overline{p}_{\beta}\}_{\beta\in A}, \, \overline{q}, \, \overline{k}, \, n)$ and $d_n(M, \, l^{\overline{k}}_{\overline{q}})$ will change by at most a factor 2.

{\bf Step 2.} We now change to the case of finite $A$ and arbitrary $\{(\nu_\alpha, \, \overline{p}_{\alpha})\}_{\alpha \in A}$, $2\le q_i<\infty$ ($i=1, \, \dots, \, d$); i.e., $q_i$ can be equal to 2.

Let $q_i=2$, $i\in J$, $q_i>2$, $i\notin J$. It suffices to consider the case when $\# J\le d-1$; in the case $q_1=\dots=q_d=2$, the estimates of the widths are obtained in \cite{vas_plq}.

Let $2<q^N_i<\infty$, $i\in J$, $q^N_i\underset{N\to \infty}{\to} 2$. For $i\notin J$, we set $q^N_i=q_i$ and denote $\overline{q}^N =(q_1^N, \, \dots, \, q_d^N)$.

We have already proved that
$$
d_n(M, \, l^{\overline{k}}_{\overline{q}^N}) \underset{d}{\gtrsim} \Psi_0(\{\overline{p}_\alpha\} _{\alpha\in A}, \, \overline{q}^N, \, \overline{k}, \, n).
$$
From properties of the Kolmogorov widths it follows that
$$
d_n(M, \, l^{\overline{k}}_{\overline{q}^N}) \underset{N\to \infty}{\to} d_n(M, \, l^{\overline{k}}_{\overline{q}}).
$$
It remains to prove that
\begin{align}
\label{psi_q_n_lim}
\Psi_0(\{\overline{p}_\alpha\} _{\alpha\in A}, \, \overline{q}^N, \, \overline{k}, \, n) \underset{N\to \infty}{\to} \Psi_0(\{\overline{p}_\alpha\} _{\alpha\in A}, \, \overline{q}, \, \overline{k}, \, n).
\end{align}

First we check that, for any $\overline{p} \in [1, \, \infty]^d$,
\begin{align}
\label{phi_q_n_lim}
\Phi(\overline{p}, \, \overline{q}^N, \, \overline{k}, \, n) \underset{N\to \infty}{\to} \Phi(\overline{p}, \, \overline{q}, \, \overline{k}, \, n).
\end{align}

Indeed, let $\sigma$ be a permutation satisfying \eqref{upor}. Then there are numbers $0\le \mu\le \nu\le d$ such that
\begin{enumerate}
\item $p_{\sigma(t)} > q_{\sigma(t)}$ or $p_{\sigma(t)} = q_{\sigma(t)} > 2$, $1\le t\le \mu$ (then $\omega _{p_{\sigma(t)}, q _{\sigma(t)}}=0$);

\item $0< \omega _{p_{\sigma(\mu+1)}, q _{\sigma(\mu+1)}} \le \omega _{p_{\sigma(\mu+2)}, q _{\sigma(\mu+2)}} \le \dots \le \omega _{p_{\sigma(\nu)}, q _{\sigma(\nu)}}<1$ (it implies that $q_{\sigma(t)}>2$ for $\mu +1 \le t\le \nu$);

\item $p_{\sigma(t)}\le 2$, $\nu+1\le t\le d$.
\end{enumerate}
If $t\le \mu$, then for sufficiently large $N$ we have $p_{\sigma(t)} > q^N_{\sigma(t)}$ or $p_{\sigma(t)} = q^N_{\sigma(t)}>2$; hence $\omega _{p_{\sigma(t)}, q^N _{\sigma(t)}}=0$. If $\mu+1\le t\le \nu$, then $q^N _{\sigma(t)} = q _{\sigma(t)}$; hence $0< \omega _{p_{\sigma(\mu+1)}, q^N _{\sigma(\mu+1)}} \le \omega _{p_{\sigma(\mu+2)}, q^N _{\sigma(\mu+2)}} \le \dots \le \omega _{p_{\sigma(\nu)}, q^N _{\sigma(\nu)}}<1$. If $t\ge \nu + 1$, then $p_{\sigma(t)}\le 2$ and $\omega _{p_{\sigma(t)}, q^N _{\sigma(t)}}=1$. Hence the analogue of \eqref{upor} holds, where $\overline{q}$ is replaced by $\overline{q}^N$.

Let $n< \prod _{i=1}^\mu k_{\sigma(i)} \prod _{i=\mu+1} ^d k_{\sigma(i)} ^{2/q_{\sigma(i)}}$. Then, for sufficiently large $N$, we have $n< \prod _{i=1}^\mu k_{\sigma(i)} \prod _{i=\mu+1} ^d k_{\sigma(i)} ^{2/q ^N_{\sigma(i)}}$; therefore, by \eqref{phi_case10} we have
$$
\Phi(\overline{p}, \, \overline{q}^N, \, \overline{k}, \, n) = \prod _{i=1}^\mu k_{\sigma(i)} ^{1/q^N_{\sigma(i)} - 1/p_{\sigma(i)}} \underset{N\to \infty}{\to}\prod _{i=1}^\mu k_{\sigma(i)} ^{1/q _{\sigma(i)} - 1/p_{\sigma(i)}} = \Phi(\overline{p}, \, \overline{q}, \, \overline{k}, \, n).
$$

Let $\mu+1\le t\le \nu$, $\prod _{i=1}^{t-1} k_{\sigma(i)} \prod _{i=t} ^d k_{\sigma(i)} ^{2/q_{\sigma(i)}} \le n < \prod _{i=1}^{t} k_{\sigma(i)} \prod _{i=t+1} ^d k_{\sigma(i)} ^{2/q_{\sigma(i)}}$. Then $q_{\sigma(t)}>2$, and for sufficiently large $N$ we have $$\prod _{i=1}^{t-1} k_{\sigma(i)} \prod _{i=t} ^d k_{\sigma(i)} ^{2/q ^N _{\sigma(i)}} \le n < \prod _{i=1}^{t} k_{\sigma(i)} \prod _{i=t+1} ^d k_{\sigma(i)} ^{2/q ^N _{\sigma(i)}}.$$ Hence, by \eqref{phi_case20}, we get
$$
\Phi(\overline{p}, \, \overline{q}^N, \, \overline{k}, \, n) = \prod _{i=1}^{t-1} k_{\sigma(i)} ^{1/q^N_{\sigma(i)} - 1/p_{\sigma(i)}} \Bigl( n^{-1/2} \prod _{i=1}^{t-1} k_{\sigma(i)}^{1/2} \prod _{i=t} ^d k_{\sigma(i)} ^{1/q ^N _{\sigma(i)}}\Bigr) ^{\frac{1/p_{\sigma(t)} - 1/q^{N}_{\sigma(t)}}{1/2 - 1/q^N _{\sigma(t)}}}\underset{N\to \infty}{\to} $$$$ \to \prod _{i=1}^{t-1} k_{\sigma(i)} ^{1/q_{\sigma(i)} - 1/p_{\sigma(i)}} \Bigl( n^{-1/2} \prod _{i=1}^{t-1} k_{\sigma(i)}^{1/2} \prod _{i=t} ^d k_{\sigma(i)} ^{1/q _{\sigma(i)}}\Bigr) ^{\frac{1/p_{\sigma(t)} - 1/q_{\sigma(t)}}{1/2 - 1/q _{\sigma(t)}}} = \Phi(\overline{p}, \, \overline{q}, \, \overline{k}, \, n).
$$

Let $n\ge \prod _{i=1}^{\nu} k_{\sigma(i)} \prod _{i= \nu +1} ^d k_{\sigma(i)} ^{2/q_{\sigma(i)}}$. Then $n\ge \prod _{i=1}^{\nu} k_{\sigma(i)} \prod _{i= \nu +1} ^d k_{\sigma(i)} ^{2/q ^N_{\sigma(i)}}$. Hence, by \eqref{phi_case30}, we have
$$
\Phi(\overline{p}, \, \overline{q}^N, \, \overline{k}, \, n) = \prod _{i=1}^{\nu} k_{\sigma(i)} ^{1/q^N_{\sigma(i)} - 1/p_{\sigma(i)}} n^{-1/2} \prod _{i=1}^{\nu} k_{\sigma(i)}^{1/2} \prod _{i= \nu+1} ^d k_{\sigma(i)} ^{1/q ^N _{\sigma(i)}} \underset{N\to \infty}{\to} $$$$ \to \prod _{i=1}^{\nu} k_{\sigma(i)} ^{1/q _{\sigma(i)} - 1/p_{\sigma(i)}} n^{-1/2} \prod _{i=1}^{\nu} k_{\sigma(i)}^{1/2} \prod _{i= \nu+1} ^d k_{\sigma(i)} ^{1/q  _{\sigma(i)}} = \Phi(\overline{p}, \, \overline{q}, \, \overline{k}, \, n).
$$

This completes the proof of \eqref{phi_q_n_lim}.

We define ${\cal Z}_m^N$, ${\cal N}_m^N$, $\overline{\theta} ^N(\overline{\alpha}, \, Z^N)$, $\lambda_j^N (\overline{\alpha}, \, Z^N)$ similarly to ${\cal Z}_m$, ${\cal N}_m$, $\overline{\theta} (\overline{\alpha}, \, Z)$, $\lambda_j (\overline{\alpha}, \, Z)$, replacing $\overline{q}$ by $\overline{q}^N$. The equations for $Z^N$ from parts 1, 2 of Definition \ref{zm_def} are the same as the equations for $Z$, with $\overline{q}$ replaced by $\overline{q}^N$. The equations from part 3 of Definition \ref{zm_def} are replaced by the equations $\omega'_{1/x_i,q_i^N}= \omega'_{1/x_l,q_l^N}$, $i$, $l\in I$.

Let us prove \eqref{psi_q_n_lim}. It is the consequence of \eqref{phi_q_n_lim} and the following assertions:
\begin{enumerate}
\item Let $\overline{\alpha} \in {\cal N}_m$. Then for sufficiently large $N$ we have $\alpha \in {\cal N}_m^N$ with the set $Z^N\in {\cal Z}_m^N$; in addition, $\overline{\theta} ^N(\overline{\alpha}, \, Z^N) \underset{N \to \infty}{\to} \overline{\theta}(\overline{\alpha}, \, Z)$, $\lambda_j ^N(\overline{\alpha}, \, Z^N) \underset{N \to \infty}{\to} \lambda_j(\overline{\alpha}, \, Z)$.

\item Let $\overline{\alpha} \notin {\cal N}_m$. Suppose that there is a subsequence $\{N_k\}_{k\in \N}$ such that $\overline{\alpha} \in {\cal N}_m^{N_k}$ (changing once again to subsequences, we may assume that the sets $Z^{N_k}\in {\cal Z}_m^{N_k}$ are defined by systems of equations that differ only by $\overline{q}^{N_k}$). Then there are $\overline{\beta} =(\alpha_{i_1}, \, \dots, \, \alpha_{i_l}) \in {\cal N}_l$ for some $l\le m$ and $Z'\in {\cal Z}_l$ such that $\overline{\theta} ^{N_k}(\overline{\alpha}, \, Z^{N_k}) \underset{k \to \infty}{\to} \overline{\theta}(\overline{\beta}, \, Z')$, $\lambda_j ^{N_k} (\overline{\alpha}, \, Z^{N_k}) \underset{k \to \infty}{\to} \lambda_j(\overline{\beta}, \, Z')$.
\end{enumerate}
In order to prove assertion 2, we consider the case when $Z^{N_k}$ are given by equations from part 3 of Definition \ref{zm_def}. Then $\#I=m$. We check that the intersection of the sets $\Delta_I := {\rm conv}\, \{(1/\overline{p}_{\alpha_j})_I\} _{j=1}^m$ and $[(1/\overline{q})_I, \, (1/\overline{2})_I]$ is nonempty, and their affine hulls intersect at the unique point.

The existence of the intersection point can be proved by passing to limit, since $\Delta_I \cap [(1/\overline{q}^{N_k})_I, \, (1/\overline{2})_I] \ne \varnothing$.

We prove the uniquness of the intersection point of the affine hulls. It suffices to consider the case when $\{i\in I:\; q_i>2\} \ne \varnothing$. Let $L={\rm aff}\, \Delta_I$. Since $\overline{\alpha}\in {\cal N}_m^{N_k}$, the equations defining $\lambda_j^{N_k}(\overline{\alpha}, \, Z^{N_k})$ have the unique solution. This implies that the system of vectors $\{(1/\overline{p}_{\alpha_j})_I\} _{j=1}^m$ is affinely independent. Therefore, $\dim L = m-1$. We get that either $L$ and ${\rm aff}\, [(1/\overline{q})_I, \, (1/\overline{2})_I]$ are complementary, or $[(1/\overline{q})_I, \, (1/\overline{2})_I] \subset L$.

Since $L$ and ${\rm aff}\, [(1/\overline{q}^N)_I, \, (1/\overline{2})_I]$ are complementary and their intersection point lies in the interior of $[(1/\overline{q}^N)_I, \, (1/\overline{2})_I]$, we have $(1/\overline{2})_I \notin L$; hence the case $[(1/\overline{q})_I, \, (1/\overline{2})_I] \subset L$ is impossible.

{\bf Step 3.} A transition from a finite to an arbitrary set $A$ proceeds as in \cite[\S 5]{vas_mix_sev}.

\begin{Biblio}
\bibitem{galeev1} E.M.~Galeev, ``The Kolmogorov diameter of the intersection of classes of periodic
functions and of finite-dimensional sets'', {\it Math. Notes},
{\bf 29}:5 (1981), 382--388.

\bibitem{vas_ball_inters} A. A. Vasil'eva, ``Kolmogorov widths of intersections of finite-dimensional balls'', {\it J. Compl.}, {\bf 72} (2022), article 101649.

\bibitem{vas_mix_sev} A. A. Vasil'eva, ``Kolmogorov widths of an intersection of a family of balls in a mixed norm'', {\it J. Appr. Theory}, {\bf 301} (2024), article 106046.

\bibitem{vas_anisotr} A. A. Vasil'eva, ``Kolmogorov widths of anisotropic Sobolev classes'', arXiv:2406.02995.

\bibitem{bed_pan} A. Benedek, R. Panzone, ``The space $L^{p}$, with mixed norm'', {\it Duke Mathematical Journal}, {\bf 28}:3 (1961), 301--324.

\bibitem{gal_pan} A. R. Galmarino, R. Panzone, ``$L^p$-Spaces with Mixed Norm, for $p$ a Sequence'', {\it J. Math. An. Appl.}, {\bf 10} (1965), 494--518.

\bibitem{itogi_nt} V.M. Tikhomirov, ``Theory of approximations''. In: {\it Current problems in
mathematics. Fundamental directions.} vol. 14. ({\it Itogi Nauki i
Tekhniki}) (Akad. Nauk SSSR, Vsesoyuz. Inst. Nauchn. i Tekhn.
Inform., Moscow, 1987), pp. 103--260 [Encycl. Math. Sci. vol. 14,
1990, pp. 93--243].

\bibitem{kniga_pinkusa} A. Pinkus, {\it $n$-widths
in approximation theory.} Berlin: Springer, 1985.

\bibitem{teml_book} V. Temlyakov, {\it Multivariate approximation}. Cambridge Univ. Press, 2018. 534 pp.

\bibitem{alimov_tsarkov} A.R. Alimov, I.G. Tsarkov, {\it Geometric Approximation Theory.} Springer Monographs in Mathematics, 2021. 508 pp.

\bibitem{k_p_s} A.N. Kolmogorov, A.A. Petrov, Yu.M. Smirnov, ``A formula of Gauss in the theory of the method of least squares'', {\it Izvestiya Akad. Nauk SSSR. Ser. Mat.} {\bf 11} (1947), 561--566 (in Russian).

\bibitem{stech_poper} S.B. Stechkin, ``On the best approximations of given classes of functions by arbitrary polynomials'', {\it Uspekhi Mat. Nauk}, {\bf 9}:1(59) (1954) 133--134 (in Russian).

\bibitem{bib_ismag} R.S. Ismagilov, ``Diameters of sets in normed linear spaces and the approximation of functions by trigonometric polynomials'',
{\it Russian Math. Surveys}, {\bf 29}:3 (1974), 169--186.

\bibitem{pietsch1} A. Pietsch, ``$s$-numbers of operators in Banach space'', {\it Studia Math.},
{\bf 51} (1974), 201--223.

\bibitem{stesin} M.I. Stesin, ``Aleksandrov diameters of finite-dimensional sets
and of classes of smooth functions'', {\it Dokl. Akad. Nauk SSSR},
{\bf 220}:6 (1975), 1278--1281 [Soviet Math. Dokl.].

\bibitem{kashin_oct} B.S. Kashin, ``The diameters of octahedra'', {\it Usp. Mat. Nauk} {\bf 30}:4 (1975), 251--252 (in Russian).

\bibitem{bib_kashin} B.S. Kashin, ``The widths of certain finite-dimensional
sets and classes of smooth functions'', {\it Math. USSR-Izv.},
{\bf 11}:2 (1977), 317--333.

\bibitem{kashin_matr} B.S. Kashin, ``On some properties of matrices of bounded operators from the space $l^n_2$ into $l^m_2$'', {\it Izv. Akad. Nauk Arm. SSR, Mat.} {\bf 15} (1980), 379--394 (in Russian).

\bibitem{gluskin1} E.D. Gluskin, ``On some finite-dimensional problems of the theory of diameters'', {\it Vestn. Leningr. Univ.}, {\bf 13}:3 (1981), 5--10 (in Russian).

\bibitem{bib_gluskin} E.D. Gluskin, ``Norms of random matrices and diameters
of finite-dimensional sets'', {\it Math. USSR-Sb.}, {\bf 48}:1
(1984), 173--182.

\bibitem{garn_glus} A.Yu. Garnaev and E.D. Gluskin, ``On widths of the Euclidean ball'', {\it Dokl. Akad. Nauk SSSR}, {\bf 277}:5 (1984), 1048--1052 [Sov. Math. Dokl. 30 (1984), 200--204]

\bibitem{galeev2} E.M. Galeev,  ``Kolmogorov widths of classes of periodic functions of one and several variables'', {\it Math. USSR-Izv.},  {\bf 36}:2 (1991),  435--448.

\bibitem{mal_rjut} Yu.V. Malykhin, K. S. Ryutin, ``The Product of Octahedra is Badly Approximated in the $l_{2,1}$-Metric'', {\it Math. Notes}, {\bf 101}:1 (2017), 94--99.

\bibitem{mal_rjut1} Yu.V. Malykhin, K.S. Ryutin, ``Widths and rigidity of unconditional sets and
random vectors'', {\it Izvestiya: Mathematics}, {\bf 89}:2 (2025) (to appear).

\bibitem{galeev5} E.M. Galeev, ``Kolmogorov $n$-width of some finite-dimensional sets in a mixed measure'', {\it Math. Notes}, {\bf 58}:1 (1995),  774--778.

\bibitem{izaak1} A.D. Izaak, ``Kolmogorov widths in finite-dimensional spaces with mixed norms'', {\it Math. Notes}, {\bf 55}:1 (1994), 30--36.

\bibitem{izaak2} A.D. Izaak, ``Widths of H\"{o}lder--Nikol'skij classes and finite-dimensional subsets in spaces with mixed norm'', {\it Math. Notes}, {\bf 59}:3 (1996), 328--330.

\bibitem{vas_besov} A. A. Vasil'eva, ``Kolmogorov and linear widths of the weighted Besov classes with singularity at the origin'', {\it J. Approx. Theory}, {\bf 167} (2013), 1--41.

\bibitem{dir_ull} S. Dirksen, T. Ullrich, ``Gelfand numbers related to structured sparsity and Besov space embeddings with small mixed smoothness'', {\it J. Compl.}, {\bf 48} (2018), 69--102.

\bibitem{vas_mix2} A.A. Vasil'eva, ``Estimates for the Kolmogorov widths of an intersection of two balls in a mixed norm'', {\it Sb. Math.}, {\bf 215}:1 (2024), 74--89.

\bibitem{hinr_mic} A. Hinrichs, C. Michels, ``Gelfand Numbers of Identity Operators Between Symmetric Sequence Spaces'', {\it Positivity}, {\bf 10}:1 (2006), 111--133.

\bibitem{schatten1} A. Hinrichs, J. Prochno, J. Vyb\'{\i}ral, ``Gelfand numbers of embeddings of Schatten classes'', {\it Math. Ann.}, {\bf 380} (2021), 1563--1593.

\bibitem{schatten2} J. Prochno, M. Strzelecki, ``Approximation, Gelfand, and Kolmogorov numbers of
Schatten class embeddings'', {\it J. Appr. Theory}, {\bf 277} (2022), article 105736.

\bibitem{vas_plq} A.A. Vasil'eva, ``Kolmogorov widths of an intersection of anisotropic finite-dimensional balls in $l_q^k$ for $1\le q\le 2$'', arXiv:2501.05893.

\end{Biblio}

\end{document}